\documentclass[12pt]{amsart}
\pdfoutput=1
\usepackage[headings]{fullpage}
\usepackage{amssymb,epic,eepic,epsfig}%,amsbsy,amsmath,bbold}
\usepackage{epstopdf}
\usepackage{graphicx}
\usepackage{texdraw}
\usepackage{url}
\usepackage[all]{xy}
\usepackage{float}   %%% for floating figures
\usepackage[bookmarks=true,%
    colorlinks=true,%
    linkcolor=blue,%
    citecolor=blue,%
    filecolor=blue,%
    menucolor=blue,%
    urlcolor=blue,%
    breaklinks=true]{hyperref}

%\usepackage{showkeys}  % THIS SHOWS LABELS
    
%%%%%%%%%%% macros

\newtheorem{theorem}{Theorem}[section]
\theoremstyle{definition}
\newtheorem{proposition}[theorem]{Proposition}
\newtheorem{lemma}[theorem]{Lemma}
\newtheorem{definition}[theorem]{Definition}
\newtheorem{remark}[theorem]{Remark}
\newtheorem{corollary}[theorem]{Corollary}

\newtheorem{question}[theorem]{Question}

%%%%%%%%%%%%% Stavros's defs

\def\BN{\mathbb N}
\def\BZ{\mathbb Z}

\def\BQ{\mathbb Q}
\def\BR{\mathbb R}
\def\BC{\mathbb C}

\def\calA{\mathcal A}
\def\calB{\mathcal B}

\def\calT{\mathcal T}
\def\calS{\mathcal S}

\def\a{\alpha}

\def\ga{\gamma}

\def\d{\delta}
\def\b{\beta}

\def\longto{\longrightarrow}

\def\pt{\partial}

\def\calB{\mathcal{B}}

\def\coeff{\mathrm{coeff}}

\def\be{  \begin{equation} }
\def\ee{  \end{equation} }

\def\ID{I_{\Delta}}
\def\rk{\mathrm{rk}}
\def\GA{\mathrm{GA}}
\def\SA{\mathrm{SA}}

\newcommand{\ha}{\frac12}
\def\GL{\mathrm{GL}}
%\hyphenation{holo-nomi-ci-ty}

%%%%%%%% Tudor's macros

\newcommand{\mb}{\mathbf}
\newcommand{\wt}{\widetilde}

\numberwithin{equation}{section}

\begin{document}

\title[The 3D index of an ideal triangulation and angle structures]{
The 3D index of an ideal triangulation and angle structures}

\author{Stavros Garoufalidis}
\address{School of Mathematics \\
         Georgia Institute of Technology \\
         Atlanta, GA 30332-0160, USA \newline
         {\tt \url{http://www.math.gatech.edu/~stavros }}}
\email{stavros@math.gatech.edu}
\thanks{Supported in part by NSF grant DMS-11-05678. 
\\
\newline
1991 {\em Mathematics Classification.} Primary 57N10. Secondary 57M25.
\newline
{\em Key words and phrases: 3D index, tetrahedron index, quantum dilogarithm,
gluing equations, Neumann-Zagier equations, hyperbolic geometry, 
ideal triangulations, 2-3 moves, pentagon, angle structures, index structures, 
q-holonomic sequences.
}
}

\date{October 6, 2015}%\today }
\dedicatory{\rm{With an appendix by Sander Zwegers}}

\begin{abstract}
The 3D index of Dimofte-Gaiotto-Gukov a partially defined function
on the set of ideal triangulations of 3-manifolds with $r$ torii 
boundary components. For a fixed $2r$ tuple of integers, 
the index takes values in the set of $q$-series with integer coefficients.

Our goal is to give an axiomatic definition of the tetrahedron index,
and a proof that the domain of the 3D index consists precisely of the set of 
ideal triangulations that support an index structure. The latter is
a generalization of a strict angle structure. We also prove that the 3D index 
is invariant under 3-2 moves, but not in general under 2-3 moves.
\end{abstract}

\maketitle

\tableofcontents

\section{Introduction}
\label{sec.intro}

%\subsection{The 3D index of \cite{DGG2}}
%\label{sub.summary}

In a series of papers \cite{DGG1,DGG2}, Dimofte-Gaiotto-Gukov studied
topological gauge theories using as input an ideal triangulation $\calT$ of 
a 3-manifold $M$. These gauge theories 
%have received a lot of attention from physics and 
play an important role in 
\begin{itemize}
\item Chern-Simons 
perturbation theory (that fits well with the earlier work on
quantum Riemann surfaces of \cite{D1} and the later work on the perturbative 
invariants of \cite{DG}),
\item
categorification and Khovanov Homology, that fits with
the earlier work \cite{Witten-fivebranes}.
\end{itemize}
Although the gauge theory depends on the ideal triangulation $\calT$, 
and the 3D index in general may not converge, physics predicts that the 
gauge theory ought to be a topological invariant of the 
underlying 3-manifold $M$. When $\pt M$ consists of $r$ torii, the low energy 
description of these gauge theories gives rise to a {\em partially} defined 
function
\be
\label{eq.ICT}
I: \{\text{ideal triangulations}\}
%\{\text{ideal triangulations}\,\, \calT | \pt M
%=\sqcup_{i=1}^r S^1 \times S^1\}
\longto \BZ((q^{1/2}))^{\BZ^r \times \BZ^r}, \qquad \calT \mapsto 
I_{\calT}(m_1,\dots,m_r,e_1,\dots,e_r) \in \BZ((q^{1/2}))
\ee
for integers $m_i$ and $e_i$, which is invariant under some {\em partial} 
2-3 moves. The building block of the 3D index $I_{\calT}$
is the {\em tetrahedron index $\ID(m,e)(q) \in \BZ[[q^{1/2}]]$} defined by
\footnote{The variables $(m,e)$ are named after the magnetic and electric 
charges of \cite{DGG2}.}
\be
\label{eq.ID}
\ID(m,e)=\sum_{n=(-e)_+}^\infty (-1)^n \frac{q^{\frac{1}{2}n(n+1)
-\left(n+\frac{1}{2}e\right)m}}{(q)_n(q)_{n+e}}
\ee
where
$$
e_+=\max\{0,e\}
$$
and $(q)_n=\prod_{i=1}^n (1-q^i)$. If we wish, we can sum in the above 
equation over the integers, with the understanding that $1/(q)_n=0$ for $n < 0$.

Roughly, the 3D index $I_{\calT}$ of an ideal triangulation $\calT$ is a sum over
tuples of integers of a finite product of tetrahedron indices evaluated at
some linear forms in the summation variables. Convergence of such sums
is not obvious, and not always expected on physics grounds. For instance,
the following sum
$$
\sum_{e \in \BZ} \ID(0,e) q^{v e}
$$
converges in $\BZ((q^{1/2}))$ if and only if $v >0$.
This follows easily from the fact that the degree $\d(e)$ of the summand 
is given by
$$
\d(e)=e + \begin{cases} 0 & \text{if} \,\,  e \geq 0 \\
\frac{e^2}{2}-\frac{e}{2} & \text{if} \,\,  e \leq 0
\end{cases}
$$ 
Our goal is to
\begin{itemize}
\item[(a)]
prove that the 3D index $I_{\calT}$ exists if and only if $\calT$
admits an index structure (a generalization of a strict angle structure); 
see Theorem \ref{thm.3} below.
\item[(b)]
give a complete axiomatic definition of the tetrahedron index
$\ID$ focusing on the combinatorial and $q$-holonomic aspects; see
Section \ref{sec.axioms}.
\item[(c)]
to show that the 3D index is invariant under $3 \to 2$ moves, but not
in general under $2 \to 3$ moves, and give a necessary and sufficient
criterion for invariance under $2 \leftrightarrow 3$ moves; see Section 
\ref{sec.23}.
\end{itemize}

%%%%%%%%%%%%%%%%%%%%%%%%%%%%%%%%%%%%%%%%%%%%%%%%%%%%%%%%%%%%%%%%%%%%%%%%%%%%
%%%%%%%%%%%%%%%%%%%%%%%%%%%%%%%%%%%%%%%%%%%%%%%%%%%%%%%%%%%%%%%%%%%%%%%%%%%%

\section{Index structures, angle structures and the 3D index}
\label{sec.results}

\subsection{Index structures}
\label{sub.index}

Consider two $r \times s$ matrices $\mb A$ and $\mb B$ with integer entries
and a column vector $v \in \BZ^r$, and let $\mb M=(\mb A | \mb B | v)$.

\begin{definition}
\label{def.strictindex}
\rm{(a)} We say that $\mb M$ supports an {\em index structure} if the rank 
of $(\mb A | \mb B)$ is $r$ and for every $Q: \{1,\dots,s\}\to\{1,2,3\}$
there exists $(\a,\b,\ga) \in \BQ^{3s}$ that satisfies
\be
\label{eq.index.structure}
\mb A \a + \mb B \ga = \nu, \qquad
\a+\b+\ga=(1,\dots,1)^T
\ee
and $Q(\a,\b,\ga) >0$. The latter means that for every $i=1,\dots,s$
the following inequalities are satisfied:
\be
\label{eq.index.ineq}
\begin{cases}
\a_i > 0 & \text{if} \quad Q(i)=1 \\
\b_i > 0 & \text{if} \quad Q(i)=2 \\
\ga_i > 0 & \text{if} \quad Q(i)=3 \\
\end{cases}
\ee
\rm{(b)} We say that $\mb M$ supports a {\em strict index structure} 
if the rank of $(\mb A | \mb B)$ is $r$ and there exists $(\a,\b,\ga) \in
\BQ_+^{3s}$ that satisfies \eqref{eq.index.structure}, where $\BQ^+$ is the
set of positive rational numbers.
\end{definition}  
It is easy to see that if $\mb M$ supports a strict index structure, then
it supports an index structure, but not conversely. As we will see in Section
\ref{sub.angle}, ideal triangulations $\calT$ give rise to matrices $\mb M$,
and a strict index structure on $\mb M$ is a strict angle structure on $\calT$.
On the other hand, index structures are new and motivated by Theorem 
\ref{thm.IMconv} below.

The next definition discusses two actions on $\mb M$: an action of $\GL(r,\BZ)$
on the left which allows for row operations on $\mb M$, and a cyclic action
of order three at the pair of the $i$th columns of $(\mb A|\mb B)$. 

\begin{definition}
\label{def.GLr}
\rm{(a)}
There is a left action of $\GL(r,\BZ)$ on $\mb M$, defined by
$$
P  \in \GL(r,\BZ) \qquad \mb M=(\mb A|\mb B|v)
\qquad P \mb{M}=(P \mb A|P \mb B|P v)
$$ 
An index structure on $\mb M$ is also an
index structure on $P \mb M$.
\newline
\rm{(b)} There is a left action of $(\BZ/3)^s$ on $\mb M$ acting
on the $i$th columns $(a_i|b_i)$ of $(\mb A|\mb B)$ (and fixing
all other columns) given by
\be
\label{eq.Z3M}
(a_i|b_i|v) \overset{S}{\mapsto} (-b_i | a_i -b_i|v-b_i)\,,
\ee
where 
\be
\label{eq.defS}
S(a|b|v)=(-b|a-b|v-b)
\ee
satisfies $S^3=\mathrm{Id}$. We extend $S$ to act on an index structure
$(\a,\b,\ga)$ of $\mb M$ by 
\be
\label{eq.Z3angle}
(\a_i,\b_i,\ga_i) \overset{S}{\mapsto} (\b_i,\ga_i,\a_i)
\ee
and fixing all other coordinates of $(\a,\b,\ga)$. It is easy to see that 
if $(\a,\b,\ga)$ is an index structure on $\mb M$ and $S \in (\BZ/3)^s$,
then $S(\a,\b,\ga)$ is an index structure of $S \mb M$.
\end{definition}

\begin{definition}
\label{def.IM}
Given $\mb M$, and  $m=(m_1,\dots,m_s), e=(e_1,\dots,e_s) 
\in \BZ^s$ consider the sum
\be
\label{eq.IM}
I_{\mb M}(m,e)(q)=\sum_{k \in \BZ^r}
q^{\frac{1}{2} v \cdot k}
\prod_{i=1}^s \ID(m_i-b_i \cdot k, e_i+a_i \cdot k)
\ee
\end{definition}

\begin{theorem}
\label{thm.IMconv}
$I_{\mb M}(m,e)(q) \in \BZ((q^{1/2}))$ is convergent for all 
$m, e\in \BZ^s$ if and only if $\mb M$ supports an index structure.
In that case, $I_{\mb M}$ is $q$-holonomic in the variables $(m,e)$.
\end{theorem}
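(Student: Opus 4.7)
The strategy is to reduce the convergence question to the asymptotic behaviour of the $q$-degree of the $k$-th summand in \eqref{eq.IM}, and then convert that asymptotic condition into the existence of an index structure via a Farkas / LP duality argument.

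First I would establish a piecewise quadratic description of the minimum $q$-degree $\delta(m,e)$ of $\ID(m,e)$. Optimising the exponent $\tfrac12 n(n+1)-(n+e/2)m$ in \eqref{eq.ID} over integer $n\geq (-e)_+$ singles out three asymptotic regimes: on the cones $\{m\geq 0,\ m+e\geq 0\}$, $\{m\leq 0,\ e\geq 0\}$ and $\{e\leq 0,\ m+e\leq 0\}$ (which tile $\BR^2$), $\delta$ is given to leading order by $-m(m+e)/2$, $-me/2$ and $e(m+e)/2$ respectively, up to subleading linear/constant terms. Let $D(k):=\tfrac12 v\cdot k+\sum_{i=1}^s\delta(m_i-b_i\cdot k,\ e_i+a_i\cdot k)$ denote the $q$-degree of the $k$-th summand. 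Convergence of $I_{\mb M}(m,e)$ in $\BZ((q^{1/2}))$ is equivalent to the coercivity statement $D(k)\to+\infty$ as $\|k\|\to\infty$. Since $D$ is piecewise quadratic, coercivity can be tested direction by direction: for $u\in\BR^r\setminus\{0\}$, the signs of $(b_i\cdot u,\,a_i\cdot u)$ select for each $i$ one of the three regimes, giving a sign pattern $Q_u:\{1,\dots,s\}\to\{1,2,3\}$; on the open cone $C_Q=\{u:Q_u=Q\}$, the $t^2$-coefficient of $D(tu)$ is an explicit quadratic form $C_Q(u)$ determined by $Q$, with a linear correction $\tfrac{t}{2}\,v\cdot u$.

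The main step is Farkas' lemma applied to each sign pattern $Q$. The index-structure system \eqref{eq.index.structure}--\eqref{eq.index.ineq} is a linear feasibility problem for $(\alpha,\beta,\gamma)\in\BQ^{3s}$ with $r+s$ equations and $s$ strict inequalities. By the theorem of alternatives, infeasibility is equivalent to the existence of a witness $u\in\BR^r$ (together with multipliers for the second equation of \eqref{eq.index.structure}) such that, on the closure of $C_Q$, the leading quadratic coefficient $C_Q(u)\leq 0$ and $v\cdot u\leq 0$, with non-trivial dual slack; such a $u$ is precisely a direction along which $D(k)$ fails to tend to $+\infty$. The combinatorial core is the clean matching of the three inequalities $\alpha_i>0,\ \beta_i>0,\ \gamma_i>0$ to the three asymptotic regimes of $\delta$, so that the Farkas dual reproduces the direction-by-direction asymptotic analysis; the rank hypothesis on $(\mb A|\mb B)$ disposes of degenerate directions in which $(b_i\cdot u,a_i\cdot u)$ could vanish simultaneously for all $i$. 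Conversely, given an index structure for every $Q$, the vectors $(\alpha,\beta,\gamma)$ produce explicit linear lower bounds for $D(k)$ via the pairing $u\cdot v=\sum_i(a_i\cdot u)\alpha_i+(b_i\cdot u)\gamma_i$, and these bounds yield coercivity.

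For the $q$-holonomic conclusion, $\ID(m,e)$ is itself $q$-holonomic in $(m,e)$ (the defining series is $q$-proper-hypergeometric in the summation index $n$), so that the summand of $I_{\mb M}(m,e)$ is $q$-holonomic in $(k,m,e)$ by closure of $q$-holonomicity under products and affine substitutions; the sum over $k\in\BZ^r$ is then $q$-holonomic by the $q$-Wilf--Zeilberger / creative-telescoping machinery, under the convergence just established. The hardest part I expect is the Farkas step: setting up the right cones $C_Q$, handling strict versus non-strict inequalities carefully at their boundaries, and ensuring that when $C_Q(u)$ degenerates, the linear term $v\cdot u$ together with the rank assumption still forces $D(k)\to+\infty$ — precisely the role played by the pair of constraints $\mb A\alpha+\mb B\gamma=v$ and $\alpha+\beta+\gamma=1$ in Definition \ref{def.strictindex}.
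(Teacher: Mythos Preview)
Your strategy matches the paper's proof closely: the paper also computes the piecewise-quadratic $q$-degree of the summand (Lemma~\ref{lem.degID}), reduces convergence to a ray-by-ray test (Lemmas~\ref{lem.pieceway} and~\ref{lem.IMrho}), and then invokes Farkas' lemma (Lemma~\ref{lem.farkas}) to identify the non-existence of a bad direction $k_0$ with the existence of an index structure for each $Q$; $q$-holonomicity is dispatched via Wilf--Zeilberger exactly as you propose. Two points of care: the leading quadratic part of $\delta$ is always \emph{nonnegative} (your sign in the first cone is reversed), so failure of coercivity along a ray forces the quadratic coefficient to vanish identically, placing each $(-b_i\cdot u,\,a_i\cdot u)$ on one of three boundary rays rather than merely in one of three cones; and the linear term of $D(tu)$ is not just $\tfrac12 v\cdot u$ but also picks up the linear piece $\delta_1$ of $\delta$ on those rays, which is exactly what makes the Farkas dual produce the three distinct affine constraints $\mb A\alpha+\mb B\gamma=v$ with $\alpha_i>0$, $\beta_i>0$, or $\gamma_i>0$ according to $Q(i)$.
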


\begin{remark}
\label{rem.WZ}
$q$-holonomicity in Theorem \ref{thm.IMconv} follows immediately from 
\cite{WZ}. Convergence is the main difficulty.
\end{remark}

\begin{remark}
\label{rem.nahm}
By definition, $I_{\mb M}$ is a generalized Nahm sum in the sense of
\cite{GL2}, where the summation is over a lattice.
\end{remark}

\begin{corollary}
\label{cor.convP}
Applying Theorem \ref{thm.IMconv} to the case $r=1$, $s=3$,
$\mb M=(\mb A| \mb B|v)=(1 \, 1 \, 1|0 \, 0 \, 0|2)$ and the
strict index structure $2=\frac{2}{3}\cdot 1 + \frac{2}{3}\cdot 1 +
\frac{2}{3}\cdot 1$, it follows that
the right hand side of the pentagon identity \eqref{eq.pentagon}
is convergent in $\BZ((q^{1/2}))$.
\end{corollary}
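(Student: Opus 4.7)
The corollary is a direct application of Theorem \ref{thm.IMconv}, so the proof plan is really just a careful verification of hypotheses followed by writing down the resulting sum. First I would check the two conditions of Definition \ref{def.strictindex}(b) for the given matrix $\mb M = (1\,1\,1 \,|\, 0\,0\,0 \,|\, 2)$. The rank condition is immediate: $(\mb A | \mb B) = (1,1,1,0,0,0)$ has rank $1 = r$. For the existence of a positive rational solution, I would take $\alpha = (2/3, 2/3, 2/3)$ as indicated, and complete it by choosing, for example, $\beta = \gamma = (1/6, 1/6, 1/6)$. Then $\alpha + \beta + \gamma = (1,1,1)^T$ holds componentwise, and $\mb A \alpha + \mb B \gamma = 2/3 + 2/3 + 2/3 = 2 = v$, with every coordinate strictly positive. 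Hence $\mb M$ supports a strict index structure.

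Since every strict index structure is in particular an index structure (as noted following Definition \ref{def.strictindex}), Theorem \ref{thm.IMconv} applies and yields $I_{\mb M}(m,e)(q) \in \BZ((q^{1/2}))$ for every $m,e \in \BZ^3$. Substituting the specific $\mb A$, $\mb B$, $v$ into Definition \ref{def.IM}, and noting that $b_i \cdot k = 0$ and $a_i \cdot k = k$ for each $i=1,2,3$ and each $k \in \BZ$, the sum collapses to
\[
I_{\mb M}(m,e)(q) = \sum_{k \in \BZ} q^{k}\,\ID(m_1, e_1 + k)\,\ID(m_2, e_2 + k)\,\ID(m_3, e_3 + k),
\]
which I would then identify with the right hand side of the pentagon identity \eqref{eq.pentagon}.

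The only real work is verifying the index-structure hypothesis, which is trivial once one writes down the positive rational tuple explicitly; the substantive content of the corollary is absorbed entirely by Theorem \ref{thm.IMconv}. The one mild obstacle is bookkeeping: making sure that the dummy-variable conventions in \eqref{eq.pentagon} match those of Definition \ref{def.IM} (possibly up to a relabeling $k \mapsto -k$ or a shift of the $e_i$'s). This is cosmetic and not mathematical, so I do not expect any genuine difficulty.
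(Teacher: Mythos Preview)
Your proposal is correct and follows exactly the intended approach: the corollary is an immediate application of Theorem \ref{thm.IMconv}, and the paper offers no separate proof beyond the statement itself. Your verification of the strict index structure (with the explicit choice $\beta=\gamma=(1/6,1/6,1/6)$) and your unwinding of Definition \ref{def.IM} to obtain $\sum_{k}q^{k}\ID(m_1,e_1+k)\ID(m_2,e_2+k)\ID(m_3,e_3+k)$ are both correct; specializing $(m_3,e_3)=(m_1+m_2,0)$ and renaming $k\mapsto e_3$ recovers the right-hand side of \eqref{eq.pentagon} on the nose, with no sign change or shift needed.
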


The next remark discusses the invariance of the index under the actions
of Definition \ref{def.GLr}.

\begin{remark}
\label{rem.GLr}
Fix $\mb M$ that supports an index structure. Then, for $P \in \GL(r,\BZ)$ and 
$S \in (\BZ/3)^s$, it follows that $P \mb M$ and $S \mb M$ also supports
an index structure. In that case, Theorem \ref{thm.IMconv}
implies that $I_{\mb M}$, $I_{P \mb M}$ and $I_{S \mb M}$ are all convergent.
We claim that
$$
I_{P \mb M} = I_{\mb M}, \qquad I_{S \mb M} = I_{\mb M} \,.
$$
The first equality follows by changing variables $k \mapsto P k$ in the
definition of $I_{\mb M}$ given by \eqref{eq.IM}. The second equality follows
from the fact that the tetrahedron index $\ID$ satisfies Equation
\eqref{eq.Z3}; this is shown in part (a) of  Theorem \ref{thm.2}.
\end{remark}

The next corollary follows easily from Theorem \ref{thm.IMconv} and the
definition of an index structure on $(\mb A|0|v_0)$.

\begin{corollary}
\label{cor.Aconv}
Fix an $r \times s$ matrix $\mb A$ with integer entries and columns
$v_i$ for $i=1,\dots,s$, and let $v_0 \in \BZ^r$, and 
let $\mb M=(\mb A|0|v_0)$. The following are equivalent:
\begin{itemize}
\item[(a)] $I_{\mb M}(q)$ converges.
\item[(b)] $\rk(\mb A)=r$ and there exist $\a_i >0$ for $i=1,\dots,s$ 
such that $v_0=\sum_{i=1}^s \a_i v_i$.
\end{itemize}
\end{corollary}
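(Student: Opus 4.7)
The plan is to invoke Theorem \ref{thm.IMconv} and then unwind the definition of an index structure in the special case $\mb B = 0$; after that, the claim reduces to a transparent linear-algebra observation.

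First, I would apply Theorem \ref{thm.IMconv} to $\mb M = (\mb A | 0 | v_0)$ to obtain that (a) is equivalent to $\mb M$ supporting an index structure. Since $\mathrm{rank}(\mb A | 0) = \mathrm{rank}(\mb A)$, the rank clause in Definition \ref{def.strictindex}(a) matches the rank clause of (b) on the nose, so the remaining task is to match the solvability/positivity condition to the existence of positive rationals $\a_i$ with $v_0 = \sum_i \a_i v_i$.

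Next, I would specialize Definition \ref{def.strictindex}(a) to $\mb B = 0$. The equation $\mb A \a + \mb B \ga = v_0$ collapses to $\mb A \a = v_0$, i.e.\ $v_0 = \sum_{i=1}^s \a_i v_i$, which involves $\a$ alone. The companion relation $\a + \b + \ga = (1,\dots,1)^T$ still leaves one free rational parameter per coordinate in $(\b, \ga)$. The key observation is that whenever $Q(i) \in \{2,3\}$, the required positivity of $\b_i$ or $\ga_i$ imposes \emph{no} constraint on $\a_i$: if $Q(i)=2$, take $\ga_i = -\a_i$ and $\b_i = 1$; if $Q(i)=3$, take $\b_i = -\a_i$ and $\ga_i = 1$. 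Thus the only $Q$-dependent condition that filters down to $\a$ is $\a_i > 0$ at the indices where $Q(i) = 1$.

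Finally, the ``worst case'' $Q \equiv 1$ demands exactly $\a \in \BQ_+^s$ with $\mb A \a = v_0$, which is (b); conversely, any such $\a$ witnesses the index structure for every $Q$ simultaneously. Hence the index structure condition on $(\mb A | 0 | v_0)$ is equivalent to (b), and combined with the first step this yields (a) $\iff$ (b). No serious obstacle is anticipated—the analytic content is fully absorbed by Theorem \ref{thm.IMconv}, and the present corollary is a clean specialization of its hypothesis.
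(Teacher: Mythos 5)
Your proof is correct and takes exactly the route the paper intends: the paper offers no argument beyond saying the corollary ``follows easily from Theorem \ref{thm.IMconv} and the definition of an index structure on $(\mb A|0|v_0)$,'' and your write-up is precisely that specialization. The one genuinely useful detail you add is the observation that when $\mb B=0$ the $\ga$-coordinates are free, so the positivity demands at indices with $Q(i)\in\{2,3\}$ cost nothing and the whole family of conditions over all $Q$ collapses to the single case $Q\equiv 1$, which is condition (b).
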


\begin{question}
\label{que.compare}
Compare the $q$-series $I_{(\mb A|0|v)}$ with the vector
partition functions of Sturmfels \cite{Sturmfels} and Brion-Vergne \cite{BV},
and the $q$-hypergeometric systems of 
equations of \cite{SST}. %\cite{SST}. 
\end{question}

\subsection{Angle structures}
\label{sub.angle}

In this section we define the 3D index of an ideal triangulation.  
A {\em generalized angle structure} on a combinatorial ideal 
tetrahedron $\Delta$ is an assignment of real numbers (called {\em angles}) 
at each edge of $\Delta$ such that the sum of the three angles around each 
vertex is $1$.\footnote{The sum of the 3 angles around each vertex is 
traditionally $\pi$.} 
It is easy to see that opposite edges are assigned the same
angle, thus a generalized angle structure is determined by a triple $(\a,\b,\ga)
\in \BR^3$ that satisfies $\a+\b+\ga=1$; see Figure \ref{fig.angles}.

\begin{figure}[htpb]
\begin{center}
\includegraphics[height=0.15\textheight]{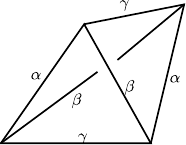}
\caption{Angles of a tetrahedron.}
\label{fig.angles}
\end{center}
\end{figure}

A generalized angle structure is {\em strict} if $\a,\b,\ga >0$.
Let $\calT$ denote an ideal triangulation of an oriented 
3-manifold $M$ with torus boundary. A {\em generalized angle structure} 
on $\calT$ 
is the assignment of angles at each tetrahedron of $\calT$ such that
the sum of angles around every edge of $\calT$ is $2$. A generalized 
angle structure
on $\calT$ is {\em strict} if its restriction to each tetrahedron is strict.
For a detailed 
discussion of angle structures and their duality with normal surfaces,
see  \cite{HRS,LT,Ti}. Generalized angle structures are linearizations
of the gluing equations, that may be used to construct complete hyperbolic
structures, and intimately connected with the theory of normal surfaces
on $M$ \cite{Jaco}.

The existence of a strict angle structure imposes restrictions on the
topology of $M$: it implies that $M$ is irreducible, atoroidal and each
boundary component of $M$ is a torus; see for example \cite{LT}. On the
other hand, if $M$ is a hyperbolic link complement, then there exist 
triangulations which admit a strict angle structure, \cite{HRS}. In fact, such
triangulations can be constructed by a suitable refinement of the 
Epstein-Penner ideal cell decomposition of $M$. Note that not
all such triangulations are geometric \cite{HRS}.

\subsection{The Neumann-Zagier matrices}
\label{sub.NZ}

Fix is an oriented ideal triangulation $\calT$ with $N$ tetrahedra
of a 3-manifold $M$ with torii boundary components. Sssign variables
$Z_i,Z'_i,Z''_i$ at the opposite edges of each tetrahedron $\Delta_i$ 
respecting its orientation as in Figure \ref{fig.tetZZZ}.

\begin{figure}[htpb]
\begin{center}
\includegraphics[height=0.15\textheight]{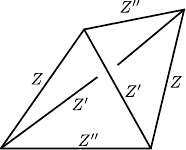}
\caption{Shapes of a tetrahedron.}
\label{fig.tetZZZ}
\end{center}
\end{figure}

Then we can read off matrices $N\times N$
matrices $\bar{\mb A}$, $\bar{\mb B}$ and $\bar{\mb C}$ whose rows are 
indexed by the $N$ edges of $\calT$ and whose columns are indexed by the 
$Z_i,Z'_i,Z''_i$ variables. These are the so-called 
{\em Neumann-Zagier matrices} that
encode the exponents of the {\em gluing equations} of $\calT$, originally
introduced by Thurston \cite{NZ,Th}. In terms of these matrices, 
a generalized angle structure is
a triple of vectors $\a,\b,\ga \in \BR^N$ that satisfy the equations
\be
\label{eq.angle}
\bar{\mb A} \a + \bar{\mb B} \b + \bar{\mb C} \ga = (2,\dots,2)^T, 
\qquad \a+\b+\ga=(1,\dots,1)^T \,.
\ee
A {\em quad} $Q$ for $\calT$ is a choice of pair of opposite edges 
at each tetrahedron $\Delta_i$ for $i=1,\dots,N$. $Q$ can be used to eliminate
one of the three variables $\a_i,\b_i,\ga_i$ at each tetrahedron using the
relation $\a_i+\b_i+\ga_i=1$. Doing so, Equations \eqref{eq.angle} take
the form
$$
\mb A \a + \mb B \ga = \nu \,.
$$ 
The matrices $(\mb A|\mb  B)$ have some key 
{\em symplectic properties}, discovered by Neumann-Zagier when $M$ is a 
hyperbolic 3-manifold (and $\calT$ is well-adapted to the hyperbolic structure) 
\cite{NZ}, and later generalized to the case of arbitrary 3-manifolds 
in \cite{Neumann-combi}. Neumann-Zagier show that the rank of 
$(\mb A|\mb  B)$ is $N-r$, where $r$ is the number of boundary 
components of $M$; all assumed torii. If we choose $N-r$ linearly
independent rows of $(\mb A|\mb  B)$, then we obtain matrices 
$(\mb A'|\mb B')$ and a vector $\nu'$, which combine to 
$\mb M=(\mb A'|\mb B'|\nu')$. In addition, the exponents of meridian and
longitude loops at each boundary torus give additional matrices
$(a^T,b^T)$ and $(c^T,d^T)$ of size $r \times 2N$.

\begin{definition}
\label{def.indexT}
The 3D index of $\calT$ is defined by  
\be
\label{eq.index2}
I_{\calT}(m,e)(q)=I_{\mb M}(d m - b e, -c m + a e)(q)
\ee
\end{definition}
%where $(a^T,b^T)$ and $(-c^T,-d^T)$ are the exponents of the meridian
%gluing equation and the negative of the longitude gluing equation.
Implicit in the above definition is a choice of quad $Q$ and a choice
of rows to remove. However, the index is independent of these choices;
see Remark \ref{rem.GLr}. Keep in mind the action of $(\BZ/3)^N$ given by
acting on the $i$th columns $\bar a_i$, $\bar b_i$ and $\bar c_i$
of $\bar{\mb A}$, $\bar{\mb B}$ and $\bar{\mb C}$ by 
$$
S(\bar a_i |\bar b_i |\bar c_i)=(\bar b_i |\bar c_i |\bar a_i), \qquad
$$
(and fixing all other columns) and on the $i$th coordinates of an angle 
structure by
$$
S(\a_i,\b_i,\ga_i)=(\b_i,\ga_i,\a_i)
$$
(and fixing all other coordinates) 
and on the $i$th columns $a_i$ and $b_i$ of $\mb A$ and $\mb B$ by
$$
S(a_i|b_i|\nu)=(-b_i |a_i-b_i|\nu-b_i) \,.
$$
(and fixing all other columns). Since the rank of $(\mb A|\mb B)$ is $N-r$
and $\mb A, \mb B$ are $(N-r)\times N$ matrices, it follows that $\mb M$
admits a strict structure if and only $\calT$ admits a strict
angle structure. In addition, $\calT$ admits an index structure if for every
choice of quad $Q$ there exist a solution $(\a,\b,\ga)$ of Equations
\eqref{eq.angle} that satisfies the inequalities \eqref{eq.index.ineq}.
Theorem \ref{thm.IMconv} implies the following.

\begin{theorem}
\label{thm.3}
The index $I_{\calT}: \BZ^r \times \BZ^r \longto \BZ((q^{1/2}))$
is well-defined if and only if $\calT$ admits an index structure. In particular,
$I_{\calT}$ exists if $\calT$ admits a strict angle structure.
\end{theorem}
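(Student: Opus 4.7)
The plan is to deduce Theorem \ref{thm.3} from Theorem \ref{thm.IMconv} by matching the combinatorial data of an ideal triangulation $\CT$ with the matrix $\mb M$ that enters Definition \ref{def.IM}. Almost all of the real work has already been front-loaded into Theorem \ref{thm.IMconv}, so what remains is essentially a dictionary and a short convergence argument.

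First I would recall the construction of $\mb M=(\mb A'|\mb B'|\nu')$ from the Neumann-Zagier matrices $(\bar{\mb A}|\bar{\mb B}|\bar{\mb C})$ of $\CT$ sketched in Section \ref{sub.NZ}: choose a quad $Q$ at each tetrahedron, use $\a_i+\b_i+\ga_i=1$ to eliminate the angle coordinate selected by $Q$, and then keep $N-r$ linearly independent rows of the resulting system. By the Neumann-Zagier rank theorem \cite{NZ,Neumann-combi} cited above, the rank of $(\mb A|\mb B)$ is precisely $N-r$, so this row selection is possible and produces $\mb M$ of full row rank $N-r$, as required by Definition \ref{def.strictindex}.

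Second, I would translate the definitions. A generalized angle structure $(\a,\b,\ga)\in\BR^{3N}$ on $\CT$ satisfying \eqref{eq.angle} projects to a solution of \eqref{eq.index.structure} for $\mb M$ (the discarded rows are linear combinations of the retained ones, together with the peripheral equations, so no information is lost). The positivity inequalities \eqref{eq.index.ineq} on $(\a,\b,\ga)$ indexed by $Q$ translate verbatim in both directions. Hence $\mb M$ supports an index structure iff for every quad $Q$ the system \eqref{eq.angle} admits a solution satisfying \eqref{eq.index.ineq}; this is precisely the definition of an index structure on $\CT$ given in the paragraph preceding the theorem. In the strict case, a strict angle structure on $\CT$ produces a strict index structure on $\mb M$, which in turn is an index structure, so the final sentence of the theorem will drop out for free.

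Third, I would apply Theorem \ref{thm.IMconv} to $\mb M$ and handle the change of variables. Since $(m,e)\mapsto(dm-be,\,-cm+ae)$ sends $\BZ^r\times\BZ^r$ into $\BZ^N\times\BZ^N$, convergence of $I_{\mb M}$ at all integer inputs immediately gives convergence of $I_{\CT}(m,e)=I_{\mb M}(dm-be,-cm+ae)$ at all integer inputs. The one subtlety for the converse is that the well-definedness of $I_{\CT}$ only tests $I_{\mb M}$ on the affine sublattice cut out by the peripheral matrix. This is harmless because convergence of $I_{\mb M}(m,e)$ depends only on the growth of the summand as $\|k\|\to\infty$, and that growth is governed by the signs of the linear forms $a_i\cdot k$, which are insensitive to shifts in $(m,e)$: convergence at one value of $(m,e)$ is equivalent to convergence at every value, and the combined statement is equivalent to the existence of an index structure on $\mb M$ by Theorem \ref{thm.IMconv}.

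The hardest step to execute carefully is the second one, where one must be sure that restricting to $N-r$ independent rows of the gluing system genuinely loses nothing at the level of (strict) angle or index structures; here the standard fact that the peripheral relations span the kernel of the reduction is exactly what one needs, and the independence of the resulting $I_{\CT}$ from the choices of $Q$, of rows, and of a meridian--longitude basis is then Remark \ref{rem.GLr}, completing the proof.
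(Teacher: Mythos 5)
Your proposal is correct and follows exactly the route the paper takes: the paper's entire proof of Theorem \ref{thm.3} is the sentence ``Theorem \ref{thm.IMconv} implies the following,'' with the dictionary between angle/index structures on $\CT$ and index structures on $\mb M$ set up in the paragraphs of Section \ref{sub.NZ} preceding the theorem statement. Your additional care about the row selection, the quad-by-quad translation of the inequalities, and the change of variables $(m,e)\mapsto(dm-be,-cm+ae)$ only makes explicit what the paper leaves implicit (and defers to Remark \ref{rem.GLr}).
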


See Section \ref{sub.m136} for an example of an ideal triangulation $\calT$ of
the census manifold {\tt m136} \cite{census} which admits a semi-strict 
angle structure (i.e., angles are nonnegative real numbers), does not
admit a strict angle structure, and which has a solution of the gluing 
equations that recover the complete hyperbolic structure. A case-by-case
analysis shows that this example admits an index structure, thus the index
$I_{\calT}$ exists. This example
appears in \cite[Example 7.7]{HRS}. We thank H. Segerman for a detailed
analysis of this example. 

\subsection{On the topological invariance of the index}
\label{sub.invariance}

Physics predicts that when defined, the 3D index $I_{\calT}$ depends only
on the underlying 3-manifold $M$. Recall that \cite{HRS} prove that every
hyperbolic 3-manifold $M$ that satisfies
\be
\label{eq.H12}
H_1(M,\BZ/2)\to H_1(M,\pt M,\BZ/2) \qquad \text{is the zero map}
\ee
(eg. a hyperbolic link complement) admits an ideal triangulation with
a strict angle structure, and conversely if $M$ has an ideal triangulation
with a strict angle structure, then $M$ is irreducible, atoroidal and 
every boundary component of $M$ is a torus \cite{LT}.

A simple way to construct a topological invariant using the index, would
be a map
$$
M \mapsto \{I_{\calT} \,\,|\,\, \calT \in \calS_M \}
$$
where $M$ is a cusped hyperbolic 3-manifold with at least one cusp and
$\calS_M$ is the set of ideal triangulations of $M$ that support an index
structure. The latter is a nonempty (generally infinite) set by \cite{HRS},
assuming that $M$ satisfies \eqref{eq.H12}.
If we want a finite set, we can use the subset $\calS^{\mathrm{EP}}_M$
of ideal triangulations $\calT$ of $M$ which are a refinement of the 
Epstein-Penner cell-decomposition of $M$. Again, \cite{HRS} implies that
$\calS^{\mathrm{EP}}_M$ is nonempty assuming \eqref{eq.H12}. But really,
we would prefer a single 3D index for a cusped manifold $M$, rather
than a finite collection of 3D indices.

It is known that every two combinatorial ideal triangulations of a 
3-manifold are related by a sequence of {\em 2-3 moves} \cite{Ma1,Ma2,Pi}. 
Thus, topological invariance of the 3D index follows from invariance
under 2-3 moves. 
%using the pentagon identity \eqref{eq.pentagon}.

Consider two ideal triangulations $\calT$ and $\wt{\calT}$ with $N$ and
$N+1$ tetrahedra related by a $2-3$ {\em move} shown in Figure \ref{fig.23}.

\begin{figure}[htpb]
\begin{center}
\includegraphics[height=0.20\textheight]{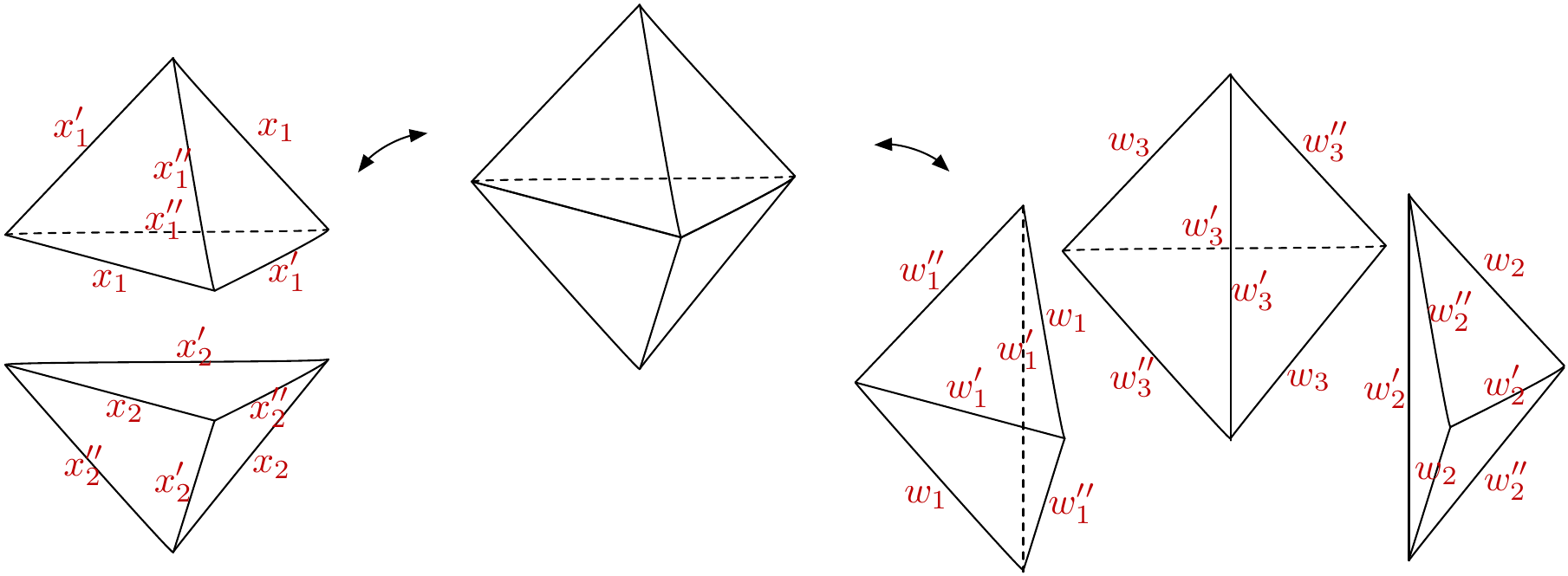}
\caption{A 2--3 move: a bipyramid split into $N$ 
tetrahedra for $\calT$ and $N+1$ tetrahedra for $\wt{\calT}$.}
\label{fig.23}
\end{center}
\end{figure}

\begin{proposition}
\label{prop.3.2.index}
If $\widetilde{\calT}$ admits a strict angle structure
structure, so does $\calT$ and $I_{\wt{\calT}}=I_{\calT}$.
\end{proposition}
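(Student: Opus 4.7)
The plan has two parts: (i) transfer the strict angle structure from $\wt{\CT}$ to $\CT$, and (ii) prove the identity $I_{\wt{\CT}}=I_{\CT}$ by localizing the summation to the bipyramid and invoking the pentagon identity \eqref{eq.pentagon}.

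For (i), the $2$--$3$ move is local, so $\wt{\CT}$ and $\CT$ agree outside the bipyramid. Label the three tetrahedra $\wt\Delta_1,\wt\Delta_2,\wt\Delta_3$ of $\wt{\CT}$'s bipyramid so that their common interior edge $e_0$ is assigned the angle $\b_i$ in $\wt\Delta_i$. Strictness provides $\a_i,\b_i,\ga_i>0$, and the edge equation at $e_0$ forces $\b_1+\b_2+\b_3=2$. The $3$--$2$ combinatorics prescribes the six angles of the two tetrahedra $\Delta_1,\Delta_2$ of $\CT$'s bipyramid as explicit linear combinations of the $\a_i,\b_i,\ga_i$, and positivity follows directly from $\a_i,\ga_i>0$ together with the global constraint $\b_1+\b_2+\b_3=2$ and the per-tetrahedron constraints $\a_i+\b_i+\ga_i=1$. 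The edge equations at all other edges of $\CT$ coincide with those of $\wt{\CT}$ (since the local move preserves the external boundary of the bipyramid), yielding a strict angle structure on $\CT$.

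For (ii), use the Neumann--Zagier presentation from Section \ref{sub.NZ} to write both indices via formula \eqref{eq.IM}. The matrix $\wt{\mb M}$ for $\wt{\CT}$ has one more column-triple and one more row than $\mb M$ for $\CT$, corresponding to the extra tetrahedron and the extra edge $e_0$. Using Remark \ref{rem.GLr}, we perform $\GL(r,\BZ)$ row operations and a $(\BZ/3)^s$ quad-rotation so that the bipyramid columns and the $e_0$-row of $\wt{\mb M}$ are in a standard normal form in which the dependence on the ``external'' summation variables matches, piece by piece, the bipyramid columns of $\mb M$. The outer sum over the ambient summation variables then factors identically on the two sides, leaving an inner sum over one variable $k_0$ (the weight through $e_0$) of a product of three $\ID$-factors. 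After a linear change of $k_0$, this inner sum is exactly the left-hand side of the pentagon identity \eqref{eq.pentagon} evaluated at external charges determined by the ambient summation, and the identity collapses it to the corresponding product of two $\ID$-factors for $\Delta_1,\Delta_2$.

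The main obstacle is the bookkeeping at the interface: one must verify that the explicit linear forms (in $k_0$, the ambient summation variables, and the external charges $m,e$) appearing as arguments of the $\ID$'s on the two sides of the identity agree exactly as required by the pentagon, and that the quadratic prefactor $q^{\frac12 v\cdot k}$ also matches after the substitution. This is a finite, concrete comparison of the bipyramid columns of the Neumann--Zagier matrices of $\CT$ and $\wt{\CT}$; once carried out, convergence of the inner sum is guaranteed by Corollary \ref{cor.convP}, and the equality $I_{\wt{\CT}}=I_{\CT}$ follows from the pentagon identity.
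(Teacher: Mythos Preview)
Your proposal is correct and follows essentially the same route as the paper. For part~(i) the paper makes the map $\mu_{3\to 2}$ explicit via the six longitudinal-edge relations \eqref{eq.s6}, from which positivity of the $X$-angles is immediate since each is a sum of two positive $W$- or $W''$-angles; your description matches this. For part~(ii) the paper carries out precisely the ``bookkeeping at the interface'' you flag as the main obstacle: it computes $(\wt{\bar{\mb A}}|\wt{\bar{\mb B}}|\wt{\bar{\mb C}})$ from $(\bar{\mb A}|\bar{\mb B}|\bar{\mb C})$ column by column, applies a single $\GL$ row operation $P$ (no $(\BZ/3)^s$ move is needed once the quad is fixed as in Figure~\ref{fig.23}), and reads off \eqref{eq.wtAB}, after which the inner sum over the extra variable is literally the right-hand side of the pentagon identity \eqref{eq.pentagon} and collapses to the two-factor product on the left. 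One small slip of phrasing: the inner sum on the $\wt{\CT}$ side is the \emph{right}-hand side of \eqref{eq.pentagon}, not the left.
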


For the next proposition, a special index 
structure on $\calT$ is given in Definition \ref{def.special.index}.

\begin{proposition}
\label{prop.2.3.index}
If $\calT$ admits a special strict angle structure, then 
$\wt{\calT}$ admits a strict angle structure and $I_{\wt{\calT}}=I_{\calT}$.
\end{proposition}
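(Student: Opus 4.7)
The plan is to run the $2\to 3$ direction of the argument dual to Proposition~\ref{prop.3.2.index}, with the key algebraic input being the pentagon identity for $\ID$ (referenced in Corollary~\ref{cor.convP}), which expresses a sum over one integer variable of a product of three tetrahedron indices as a product of two tetrahedron indices. Since $\wt\CT$ has exactly one more tetrahedron than $\CT$, the 3D index $I_{\wt\CT}$ has one extra summation coordinate compared to $I_\CT$; the pentagon identity collapses precisely this variable. The special-strict-angle-structure hypothesis has two r\^oles: it produces a matching strict angle structure on $\wt\CT$ so that Theorem~\ref{thm.3} guarantees $I_{\wt\CT}$ converges, and it ensures the combinatorial alignment needed to invoke the pentagon on the nose.

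First I would produce the strict angle structure on $\wt\CT$. Outside the bipyramid the angles are inherited from $\CT$ unchanged. Inside the bipyramid, $\CT$ has two tetrahedra with angle triples $(\alpha_i,\beta_i,\gamma_i)$, $i=1,2$, sharing a common face, while $\wt\CT$ has three tetrahedra sharing a new interior edge $e^\ast$. The nine dihedral angles of the three new tetrahedra are prescribed linear forms in the six old angles, dictated by the geometry of the bipyramid and the gluing pattern. The content of Definition~\ref{def.special.index} is precisely the condition on $(\alpha_i,\beta_i,\gamma_i)$ that makes all nine of these linear combinations lie in $(0,1)$ and satisfy the edge relations around $e^\ast$ and around the old edges of $\CT$ that are affected by the move. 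This yields a strict angle structure, hence an index structure, on $\wt\CT$.

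Next I would match the indices. Using Definition~\ref{def.indexT} together with the invariance under the $\GL(N-r,\BZ)\times(\BZ/3)^N$-actions (Remark~\ref{rem.GLr}), I would choose quads on $\CT$ and $\wt\CT$ that agree outside the bipyramid and are aligned in the bipyramid so that the Neumann--Zagier matrices $\mb M$ and $\wt{\mb M}$ differ only in the bipyramid block: $\wt{\mb M}$ has one additional summation coordinate $k_\ast\in\BZ$ and one additional row coming from $e^\ast$. The summand of $I_{\wt\CT}$ then factors as the summand of $I_\CT$ on the outside-the-bipyramid tetrahedron indices, times a $k_\ast$-dependent product of three tetrahedron indices coming from the bipyramid. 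Performing the $k_\ast$-sum first and applying the pentagon identity, the three-fold product collapses to exactly the two-fold product supplied by the bipyramid in $\CT$. Interchanging the two summations is legitimate because both iterated sums converge by Theorem~\ref{thm.IMconv}, giving $I_{\wt\CT}=I_\CT$.

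The main obstacle is the bookkeeping in the previous step: one must verify that the linear forms $m_i - b_i\!\cdot\!k$ and $e_i + a_i\!\cdot\!k$ appearing inside the three bipyramid tetrahedron indices of $\wt{\mb M}$, after specializing to the solution of the $e^\ast$-edge equation, match precisely the arguments of the two bipyramid tetrahedron indices of $\mb M$. This is a calculation in the gluing data of the $2\leftrightarrow 3$ move, and it has to be carried out carefully because of the $(\BZ/3)^N$-ambiguity in how one assembles $\mb M$ from $\bar{\mb A},\bar{\mb B},\bar{\mb C}$. The special hypothesis is exactly what guarantees that there is a consistent choice of quads in the two old bipyramid tetrahedra for which the $k_\ast$-reduction lands on the standard form of the pentagon identity, so that convergence of the inner sum and termwise agreement hold simultaneously.
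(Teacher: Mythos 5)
Your proposal follows essentially the same route as the paper: extend the angle structure across the bipyramid using the linear relations \eqref{eq.s3}--\eqref{eq.s6} (where the special condition \eqref{eq.special.angle} is exactly what guarantees a strictly positive solution), and then collapse the extra summation variable in the Neumann--Zagier presentation via the pentagon identity \eqref{eq.pentagon}, exactly as in the $3\to 2$ computation. One small correction: the nine new angles are not uniquely ``prescribed'' by the six old ones --- the system leaves one free parameter (the paper's $W_1$ in \eqref{eq.W1free}), and the special hypothesis is the condition that \emph{some} value of that parameter makes all nine angles positive, not that fixed linear forms are positive.
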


\begin{remark}
\label{rem.23curious}
The asymmetry in Propositions \ref{prop.3.2.index} and \ref{prop.3.2.index} 
is curious, but also necessary. The origin of this asymmetry
is the fact that 3-2 moves always preserve strict angle structures but 
2-3 moves sometimes do not. If 2-3 moves always preserved strict angle
structures, then all ideal triangulations of a fixed manifold would admit
strict angle structures as long as one of them does. On the other hand,
an ideal triangulation that contains an edge which belongs to
exactly one (or two) ideal tetrahedra does not admit a strict angle structure
since the angle equations around that edge should add to $2$. Such 
triangulations are easy to construct, even for hyperbolic 3-manifolds (eg.
the $4_1$ knot).
\end{remark}

%%%%%%%%%%%%%%%%%%%%%%%%%%%%%%%%%%%%%%%%%%%%%%%%%%%%%%%%%%%%%%%%%%%%%%%%%%%%
%%%%%%%%%%%%%%%%%%%%%%%%%%%%%%%%%%%%%%%%%%%%%%%%%%%%%%%%%%%%%%%%%%%%%%%%%%%%

\section{Axioms for the tetrahedron index}
\label{sec.axioms}

In this section we discuss an axiomatic approach to the tetrahedron 
index. Let $\BZ((q^{1/2}))$ (resp., $\BZ[[q^{1/2}]]$) denote the ring of 
series of the form
$$
f(q)=\sum_{n \in \frac{1}{2}\BZ} a_{n} q^{n}
$$
where there exists $n_0=n_0(f)$ such that $a_n=0$ for all $n < n_0$ (resp.,
$n < 0$). For $f(q) \in \BZ((q^{1/2}))$, its {\em degree}
$\d(f(q))$ is the largest half-integer (or infinity) such that
$f(q) \in q^{d(f)}\BZ[[q^{1/2}]]$. We will say that $f(q) \in \BZ((q^{1/2}))$
is $q$-{\em positive} if $\d(f(q)) \geq 0$.

\begin{definition}
\label{def.2rec}
A {\em tetrahedron index} is a function $f: \BZ^2 \longto \BZ((q^{1/2}))$
that satisfies the equations
\begin{subequations} 
\be
\label{eq.rec1}
 q^{\frac{e}{2}} f(m + 1, e) + q^{-\frac{m}{2}} f(m, e + 1) - f(m, e)=0
\ee
\be
\label{eq.rec2} 
 q^{\frac{e}{2}} f(m - 1, e) + q^{-\frac{m}{2}} f(m, e - 1) - f(m, e)=0
\ee
\end{subequations}
for all integers $m,e$, together with the parity condition $f(m,e) \in
q^{\frac{e m}{2}}\BZ((q))$ for all $m$ and $e$.
Let $V$ denote the set of all tetrahedron indices, and $V_+$ denote 
the set of all $q$-positive tetrahedron indices.
\end{definition}

\begin{theorem}
\label{thm.1}
\rm{(a)} $V$ is a free $q$-holonomic $\BZ((q))$-module of rank $2$.
\newline
\rm{(b)} $V_+$ is a free $q$-holonomic $\BZ[[q]]$-module of rank $1$.
\newline
\rm{(c)} If $f \in V$, then it satisfies the equation
\be
\label{eq.Z3}
f(m,e)(q)=(-q^{\frac{1}{2}})^{-e}f(e,-e-m)(q)
=(-q^{\frac{1}{2}})^m f(-e-m,m)(q)
\ee
for all integers $m$ and $e$.
\newline
\rm{(d)} If $f \in V$, then it satisfies the equations
%%%% see Mathematica notebook: dimofte/3DIndex/AhatIndex.qZeil.nb
%%%% I shifted e->e+1 in the first and m->m+1 in the second to make them
%%%% look nicer.

\begin{subequations} 
\be
\label{eq.rec1a}
f(m, e+1) + (q^{e + \frac{m}{2}} - q^{-\frac{m}{2}} - q^{\frac{m}{2}}) 
f(m, e) + f(m, e - 1)  = 0
\ee
\be
\label{eq.rec2a} 
f(m+1, e) + (q^{- \frac{e}{2} - m} - q^{-\frac{e}{2}} - q^{\frac{e}{2}}) 
f(m, e) +  f(m - 1, e) =   0 
\ee
\end{subequations}
for all integers $m,e$. 
\end{theorem}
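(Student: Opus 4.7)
I would prove the four parts in the order (d), (a), (c), (b), since each builds on the previous.

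For part (d), derive the three-term recursions by eliminating the row shift from the defining pair \eqref{eq.rec1}-\eqref{eq.rec2}. Concretely: solve \eqref{eq.rec1} at $(m,e)$ and at $(m,e-1)$ for $f(m+1,e)$ and $f(m+1,e-1)$ in terms of values in row $m$; substitute into \eqref{eq.rec2} at $(m+1,e)$; then clear powers of $q$. After straightforward simplification the identity reduces to \eqref{eq.rec1a}, and the symmetric elimination (solving for the shift in $e$ and substituting into \eqref{eq.rec1}) yields \eqref{eq.rec2a}. These are purely algebraic manipulations; the parity condition is preserved throughout.

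For part (a), use (d) to upper-bound the rank. The recursion \eqref{eq.rec1a} at $m=0$ is a second-order linear recursion for $e\mapsto f(0,e)$, so the entire column $f(0,\cdot)$ is determined by the pair $(f(0,0),f(0,1))\in\BZ((q))^2$. Then \eqref{eq.rec1} at $m=0$ gives $f(1,e)$ in terms of $f(0,e)$ and $f(0,e+1)$; iterating (and using \eqref{eq.rec2} to go to $m<0$) determines the whole array. Hence $\rank_{\BZ((q))}V\le 2$. For the reverse inequality, exhibit two linearly independent solutions: $\ID$ itself, whose satisfaction of \eqref{eq.rec1}-\eqref{eq.rec2} is a routine $q$-hypergeometric calculation from \eqref{eq.ID}, and a second solution constructed from complementary initial data; consistency of this second extension with both \eqref{eq.rec1} and \eqref{eq.rec2} follows from the reversibility of the eliminations carried out in (d). The $q$-holonomicity of $V$ is immediate from the defining pair.

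For part (c), introduce the operator $T:V\to V$ defined by $(Tf)(m,e)=(-q^{1/2})^{-e}f(e,-e-m)$. A direct substitution shows $T$ sends solutions of \eqref{eq.rec1}-\eqref{eq.rec2} to solutions, so $T$ is a $\BZ((q))$-linear endomorphism of $V$, and a further calculation gives $T^3=\mathrm{Id}$. The two identities in \eqref{eq.Z3} are precisely the assertions $Tf=f$ and $T^2f=f$, so it suffices to prove $T=\mathrm{Id}_V$. By (a), $\rank_{\BZ((q))}V=2$, so the characteristic polynomial of $T$ has degree $2$ and divides $x^3-1=(x-1)(x^2+x+1)$; since $\sqrt{-3}\notin\BZ((q))$, the quadratic factor is irreducible over $\BZ((q))$, leaving only $(x-1)^2$ and $x^2+x+1$. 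The latter is excluded because $\ID$ is a nonzero fixed vector of $T$ by Theorem \ref{thm.2}(a), which is verified independently from the explicit formula \eqref{eq.ID}. Hence $T=\mathrm{Id}_V$.

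For part (b), use the basis of $V$ from (a). Having fixed the generator $\ID$ of a candidate $\BZ[[q]]$-submodule, write any $f\in V$ as $f=a\ID+bg$ with $a,b\in\BZ((q))$, where $g$ is a second basis vector. A degree analysis of $f(m,e)$ at strategically chosen pairs $(m,e)$ (those where $\ID$ has its sharpest positivity behavior and $g$ develops its most negative powers) shows that $q$-positivity of $f$ at all $(m,e)$ forces $b=0$ and $a\in\BZ[[q]]$, so $V_+=\BZ[[q]]\cdot\ID$ is free of rank $1$.

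The main obstacle is the proof of $T=\mathrm{Id}$ in (c): the abstract linear algebra reduces it to producing one fixed vector, but that reduction relies on the tetrahedron symmetry for $\ID$, which must be verified by a separate combinatorial argument from \eqref{eq.ID}. Constructing the second explicit basis element in (a) and carrying out the degree analysis underlying (b) are secondary but still delicate technical points.
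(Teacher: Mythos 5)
Your overall architecture (d) $\to$ (a) $\to$ (c) $\to$ (b) matches the paper's. For (d) and the rank bound in (a), the paper does exactly your elimination, but certifies it with the {\tt HolonomicFunctions.m} Gr\"obner-basis computation rather than by hand; for (b) the paper makes your ``degree analysis'' concrete by writing $f(0,e)=q^{-e^2/2-e/2}(p_1(e)A+p_2(e)B)$ for $e<0$ and observing that $q$-positivity yields a triangular system expressing the coefficients of $B=f(0,1)$ in terms of those of $A=f(0,0)$, whence $\rank V_+\le 1$, with $\ID\in V_+$ giving equality. One soft spot in your (a): the lower bound $\rank V\ge 2$ requires showing that \emph{arbitrary} initial data $(f(0,0),f(0,1))$ extends to a simultaneous solution of both \eqref{eq.rec1} and \eqref{eq.rec2}; ``reversibility of the eliminations'' asserts rather than proves this compatibility, which is precisely what the Gr\"obner-basis certificate supplies.

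Part (c) is where you genuinely diverge, and where your plan has a real gap. Your minimal-polynomial argument is salvageable (after passing to the fraction field of $\BZ((q))$, which is not a field, and noting it is the \emph{minimal}, not characteristic, polynomial that divides the separable $x^3-1$, so $T^3=\mathrm{Id}$ on a rank-$2$ module with a nonzero fixed vector forces $T=\mathrm{Id}$), but it hinges on the identity $S\ID=\ID$, which you defer to ``a separate combinatorial argument from \eqref{eq.ID}.'' That identity is exactly the content of \eqref{eq.Z3} for $\ID$, so you have not reduced the difficulty --- you have relocated it, and the hard step is left unproven. The paper's Proposition \ref{prop.S3} shows the detour is unnecessary: having checked by direct substitution that $S$ maps solutions of \eqref{eq.rec1}--\eqref{eq.rec2} to solutions, and knowing from (a) that any $f\in V$ is determined by $(f(0,0),f(0,1))$, one computes from the recursions that $f(0,-1)=f(0,0)-f(0,1)$ and $f(1,-1)=-q^{1/2}f(0,1)$, hence $(Sf)(0,0)=f(0,0)$ and $(Sf)(0,1)=-q^{-1/2}f(1,-1)=f(0,1)$, so $Sf=f$ for \emph{every} $f\in V$ with no special verification for $\ID$. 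Replacing your fixed-vector step with this two-line computation closes the gap you flagged as the main obstacle.
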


\begin{question}
\label{que.basisV}
What is a basis for $V$?
\end{question}

\begin{remark}
\label{rem.thm1a}
The proof of part (a) of Theorem \ref{thm.1} implies that if $f(m,e)$
is a tetrahedron index, then $f(m,e)$ is a a unique $\BZ[q^{\pm 1/2}]$ 
linear combination of $A$ and $B$ where $(f(0,0),f(0,1))=(A,B)$.
For example, if $C=(f(m,e))_{-2 \leq m,e \leq 2}$, then 
$C=M_A A + M_B B$ where 
%%% copied and pasted from Mathematica file
%%% dimofte/3DIndex.Computations.nb
{\tiny
\begin{eqnarray*}
M_A &=& 
\left(
\begin{array}{ccccc}
 1-\frac{1}{q^3}+\frac{1}{q^2}+\frac{1}{q}-q^2 & \frac{1}{q}-q & -1 & -\frac{1}{q} & -\frac{1}{q^2}+\frac{1}{q} \\
 1-\frac{1}{q^2}+\frac{1}{q} & \frac{1}{\sqrt{q}} & 0 & -\frac{1}{\sqrt{q}} & -\frac{1}{q} \\
 1-\frac{1}{q} & 1 & 1 & 0 & -1 \\
 -1 & 0 & 1 & \frac{1}{\sqrt{q}} & \frac{1}{q}-q \\
 -q & -1 & 1-\frac{1}{q} & 1-\frac{1}{q^2}+\frac{1}{q} & 1-\frac{1}{q^3}+\frac{1}{q^2}+\frac{1}{q}-q^2 \\
\end{array}
\right)
\\
M_B &=& 
\left(
\begin{array}{ccccc}
 \frac{1}{q^3}-\frac{2}{q^2}-\frac{1}{q}+q+2 q^2-q^3 & 1-\frac{1}{q}+2 q-q^2 & 2-q & -1+\frac{1}{q} & \frac{1}{q^2}-\frac{2}{q} \\
 -1+\frac{1}{q^2}-\frac{2}{q}+q & -\frac{1}{\sqrt{q}}+\sqrt{q} & 1 & \frac{1}{\sqrt{q}} & -1+\frac{1}{q} \\
 -2+\frac{1}{q} & -1 & 0 & 1 & 2-q \\
 1-q & -\sqrt{q} & -1 & -\frac{1}{\sqrt{q}}+\sqrt{q} & 1-\frac{1}{q}+2 q-q^2 \\
 2 q-q^2 & 1-q & -2+\frac{1}{q} & -1+\frac{1}{q^2}-\frac{2}{q}+q & \frac{1}{q^3}-\frac{2}{q^2}-\frac{1}{q}+q+2 q^2-q^3 \\
\end{array}
\right)
\end{eqnarray*}
}
\end{remark}

\begin{remark}
\label{rem.thm1b}
The proof of part (b) of Theorem \ref{thm.1} implies that if $f(m,e)$
is a tetrahedron index, then $f(m,e)$ is uniquely determined by
$f(0,0)=\sum_{n=0}^\infty a_n q^n$. In particular, if 
$f(0,1)=\sum_{n=0}^\infty b_n q^n$, then $b_n$ are $\BZ$-linear combinations
of $a_k$ for $k \leq n$. For example, we have:
{\small
\begin{eqnarray*}
b_{0} &=&  a_{0}
\\ 
b_{1} &=&  a_{0}+a_{1}
\\ 
b_{2} &=&  2 a_{0}+a_{1}+a_{2}
\\ 
b_{3} &=&  4 a_{0}+2 a_{1}+a_{2}+a_{3}
\\ 
b_{4} &=&  9 a_{0}+4 a_{1}+2 a_{2}+a_{3}+a_{4}
\\ 
b_{5} &=&  20 a_{0}+9 a_{1}+4 a_{2}+2 a_{3}+a_{4}+a_{5}
\\ 
b_{6} &=&  46 a_{0}+20 a_{1}+9 a_{2}+4 a_{3}+2 a_{4}+a_{5}+a_{6}
\\ 
b_{7} &=&  105 a_{0}+46 a_{1}+20 a_{2}+9 a_{3}+4 a_{4}+2 a_{5}+a_{6}+a_{7}
\\ 
b_{8} &=&  242 a_{0}+105 a_{1}+46 a_{2}+20 a_{3}+9 a_{4}+4 a_{5}+2 a_{6}+a_{7}+a_{8}
\\ 
b_{9} &=&  557 a_{0}+242 a_{1}+105 a_{2}+46 a_{3}+20 a_{4}+9 a_{5}+4 a_{6}+2 a_{7}+a_{8}+a_{9}
\\ 
b_{10} &=&  1285 a_{0}+557 a_{1}+242 a_{2}+105 a_{3}+46 a_{4}+20 a_{5}+9 a_{6}+4 a_{7}+2 a_{8}+a_{9}+a_{10}
\\ 
b_{11} &=&  2964 a_{0}+1285 a_{1}+557 a_{2}+242 a_{3}+105 a_{4}+46 a_{5}+20 a_{6}+9 a_{7}+4 a_{8}+2 a_{9}+a_{10}+a_{11}
\\ 
b_{12} &=&  6842 a_{0}+2964 a_{1}+1285 a_{2}+557 a_{3}+242 a_{4}+105 a_{5}+46 a_{6}+20 a_{7}+9 a_{8}+4 a_{9}+2 a_{10} \\ & & +a_{11}+a_{12}
%\\ 
%b_{13} &=&  15793 a_{0}+6842 a_{1}+2964 a_{2}+1285 a_{3}+557 a_{4}
% +242 a_{5}+105 a_{6}+46 a_{7}+20 a_{8}+9 a_{9}+4 a_{10} \\ 
%& & +2 a_{11}+a_{12}+a_{13}
%\\ 
%b_{14} &=&  36463 a_{0}+15793 a_{1}+6842 a_{2}+2964 a_{3}+1285 a_{4}
% +557 a_{5}+242 a_{6}+105 a_{7}+46 a_{8}+20 a_{9} \\ & & +9 a_{10}
%+4 a_{11}+2 a_{12}+a_{13}+a_{14}
\end{eqnarray*}
}
In fact, it appears that $b_n$ is a $\BN$-linear combination of
 $a_k$ for $k \leq n$, although we do not know how to show this, nor
do we know of a geometric significance of this experimental fact.
\end{remark}

%%%% see Mathematica notebook: dimofte/3DIndex/3DIndex.Computations.nb
The next lemma computes the degree of the tetrahedron index.

\begin{lemma}
\label{lem.degID}
The degree $\d(m,e)$ of $\ID(m,e)(q)$ is given by:
\be
\label{eq.degID}
\d(m,e) = \frac{1}{2}\left( m_+(m+e)_+ + (-m)_+ e_+ + (-e)_+(-e-m)_+
+\max\{0,m,-e\} \right) 
\ee
It follows that $\d(m,e)$ is a piece-wise quadratic polynomial given by
Figure \ref{fig.degI}.
\end{lemma}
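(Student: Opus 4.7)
The plan is to read off the degree directly from the defining series \eqref{eq.ID}, using the $S$-symmetry \eqref{eq.Z3} of $\ID$ to reduce to the range of $(m,e)$ in which the leading monomials of the summands do not cancel. Write
\[
\ID(m,e)(q)=\sum_{n\ge(-e)_+}(-1)^n\,\frac{q^{g(n)}}{(q)_n(q)_{n+e}},
\qquad g(n)=\tfrac12 n(n+1)-nm-\tfrac12 em.
\]
Since $1/(q)_n\in 1+q\BZ[[q]]$ for $n\ge 0$, the $n$-th summand has $q$-adic degree exactly $g(n)$ with leading coefficient $(-1)^n$. The convex parabola $g$ attains its integer minimum at the two consecutive values $n=m-1$ and $n=m$, with $g(m-1)=g(m)$.

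\emph{Case A (no cancellation): $m\le(-e)_+$.} Then $g(n+1)-g(n)=n+1-m\ge(-e)_++1-m\ge 1$ on the summation range, so the minimum of $g$ is attained only at $n=(-e)_+$ and hence $\d(\ID(m,e))=g((-e)_+)$. A direct computation gives $g(0)=-em/2$ when $e\ge 0$, and $g(-e)=e(e+m-1)/2$ when $e<0$. Under the hypothesis $m\le(-e)_+$ the right-hand side of \eqref{eq.degID} collapses to exactly one of these two expressions: two of the three bilinear terms vanish because either $m_+$ or $(-m)_+$ is zero and similarly on the $e$ side, and one checks that $\max\{0,m,-e\}=(-e)_+$. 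Thus the formula holds throughout Case A.

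\emph{Case B (cancellation): $m\ge 1$ and $m+e\ge 1$.} Here $n=m-1$ and $n=m$ both lie in the summation range, $g(m-1)=g(m)$, and the leading $q^{g(m)}$ contributions of these two summands cancel because $(-1)^{m-1}+(-1)^m=0$; so $\min g$ is not the true degree. I would apply the $S$-symmetry $\ID(m,e)=(-q^{1/2})^{-e}\,\ID(e,-e-m)$ from Theorem~\ref{thm.1}(c), obtaining $\d(m,e)=-e/2+\d(e,-e-m)$. The point $(e,-e-m)$ satisfies $e+(-e-m)=-m\le -1$, so it lies in the no-cancellation regime of Case A; plugging in gives $\d(e,-e-m)=(e+m)(m+1)/2$, and hence $\d(m,e)=m(m+e+1)/2$, matching \eqref{eq.degID} in the two remaining sign regions ($m,e\ge 0$ and $m\ge 0,\,e\le 0,\,m+e\ge 0$).

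The main obstacle is organizational: the piecewise-quadratic formula \eqref{eq.degID} must be verified on the six sign regions of $(m,e)\in\BZ^2$, four of which fall under Case A and two under Case B. A conceptually cleaner shortcut, which I would include, is to first verify algebraically that the right-hand side of \eqref{eq.degID} is itself $S$-covariant, i.e.\ that $\d(m,e)-\d(e,-e-m)=-e/2$---this follows from the identity $\max\{0,e,a+e\}=e+\max\{0,m,-e\}$ obtained from $a+b+c=0$ with $(a,b,c)=(m,e,-e-m)$. Once this is known, only one representative per $S$-orbit needs to be checked, and one can always choose it in the no-cancellation regime of Case~A, collapsing the argument to the single computation of $g((-e)_+)$ done there. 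The piecewise-quadratic picture displayed in the lemma is then immediate from the per-region expressions.
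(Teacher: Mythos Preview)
Your proof is correct and follows essentially the same strategy as the paper: reduce via the $\BZ/3$-symmetry of $\ID$ to a region where the minimum-degree summand is unique and its leading term cannot cancel, then read off $\d(m,e)$ as the degree of that summand. The paper reduces immediately to the single cone $m\le 0,\,e\ge 0$ (where the $n=0$ term visibly dominates) and invokes $S$-invariance of the piecewise formula for the other cones---exactly your ``cleaner shortcut''; your Case A/Case B split is just a slightly more hands-on organization of the same argument.
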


\begin{figure}[htpb]
\begin{center}
\includegraphics[height=0.15\textheight]{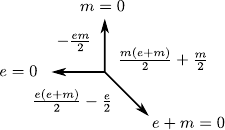}
\caption{The degree of the tetrahedron index.}
\label{fig.degI}
\end{center}
\end{figure}

The next theorem gives an axiomatic characterization of the tetrahedron
index $\ID$.

\begin{theorem}
\label{thm.2}
$\ID$ is uniquely characterized by the following equations:
\begin{itemize}
\item[(a)]
$\ID \in V_+$, $\ID(0,0)(0) \neq 0$
\item[(b)] 
$\ID$ satisfies the pentagon identity
\be
\label{eq.pentagon}
\ID(m_1-e_2,e_1)\ID(m_2-e_1,e_2)=\sum_{e_3 \in \BZ}
q^{e_3} \ID(m_1,e_1+e_3)\ID(m_2,e_2+e_3)\ID(m_1+m_2,e_3)\,,
\ee
for all integers $m_1,m_2,e_1,e_2$.
\end{itemize}
\end{theorem}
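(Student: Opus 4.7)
\emph{Strategy.} The plan is to split Theorem \ref{thm.2} into two parts: (i) existence, showing that $\ID$ itself satisfies (a) and (b); and (ii) uniqueness, showing that any $f\in V_+$ satisfying (a) and (b) must equal $\ID$. Uniqueness is quick once we invoke Theorem \ref{thm.1}(b), so the real content lies in verifying the pentagon for $\ID$.

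\emph{Existence.} Showing $\ID\in V_+$ reduces to three routine checks. The two recursions \eqref{eq.rec1}--\eqref{eq.rec2} follow by shifting the summation index in \eqref{eq.ID} so that all three terms share a common power of $q$ and a common denominator, and then collapsing with $1-q^{n+e+1}$ inside the appropriate Pochhammer factor; the parity condition is immediate from the exponent $\frac{1}{2}n(n+1)-(n+\frac{e}{2})m$ in the summand, and $q$-positivity is $\d(0,0)=0$ from Lemma \ref{lem.degID}. The normalization $\ID(0,0)(0)=1$ is the $n=0$ contribution to the series. The pentagon identity is a separate matter, discussed as the main obstacle below.

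\emph{Uniqueness.} Suppose $f\in V_+$ satisfies (a) and (b). By Theorem \ref{thm.1}(b), $V_+$ is a free $\BZ[[q]]$-module of rank one, and since $\ID(0,0)(q)=1-q+O(q^2)$ is a unit in $\BZ[[q]]$, $\ID$ itself is a generator. Hence $f=\lambda(q)\,\ID$ for a unique $\lambda(q)\in\BZ[[q]]$. Substituting this into \eqref{eq.pentagon} and using that $\ID$ already satisfies the pentagon gives
\[
\lambda^2\,X \;=\; \lambda^3\,X,\qquad X=\ID(m_1-e_2,e_1)\,\ID(m_2-e_1,e_2).
\]
Specializing $m_1=m_2=e_1=e_2=0$ makes $X=\ID(0,0)^2$, which has constant term $1$ and is therefore nonzero in the integral domain $\BZ[[q]]$. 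Thus $\lambda^2(1-\lambda)=0$, so $\lambda\in\{0,1\}$, and the hypothesis $f(0,0)(0)\neq 0$ rules out $\lambda=0$. Therefore $f=\ID$.

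\emph{Main obstacle.} The pentagon identity \eqref{eq.pentagon} for $\ID$ is the only nontrivial step. Convergence of the right-hand sum in the base case $m_1=m_2=e_1=e_2=0$ is Corollary \ref{cor.convP}, and the same index-structure argument handles general $(m_i,e_j)$. The identity itself is a combinatorial incarnation of the Faddeev--Kashaev quantum-dilogarithm pentagon, and I would prove it by expanding both sides as explicit $q$-series, interchanging the order of the triple sum on the right, and collapsing via a standard $q$-binomial (Bailey-type) rearrangement; alternatively one could import the identity from the quantum dilogarithm literature. Justifying the interchange of summation and the ensuing telescoping step is where the actual effort sits---everything else in the proof is bookkeeping.
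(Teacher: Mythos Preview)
Your proposal is correct and follows essentially the same route as the paper: existence is reduced to $\ID\in V_+$ (via the recursions, parity, and Lemma \ref{lem.degID}) together with the pentagon identity, which the paper likewise isolates as the nontrivial step and relegates to the appendix; uniqueness uses the rank-one structure of $V_+$ to write $f=\lambda\,\ID$ and then the specialized pentagon to force $\lambda^2=\lambda^3$. The only cosmetic difference is that the paper routes uniqueness through Corollary \ref{cor.V+} (giving $C\in\BQ((q))$) rather than invoking Theorem \ref{thm.1}(b) directly, but your observation that $\ID(0,0)=1+O(q)$ is a unit in $\BZ[[q]]$ makes $\ID$ a generator of $V_+$ and yields the same conclusion.
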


\begin{remark}
\label{rem.thm2}
The uniqueness part of Theorem \ref{thm.2} uses only the facts that
$\ID \in V$, $\d(\ID(0,e)) \geq 0$ for all $e$ and $\ID$ satisfies
the special pentagon 
$$
\ID(0,0)^2=\sum_{e \in \BZ} \ID(0,e)^3 q^{e} \,.
$$
\end{remark}

%%%%%%%%%%%%%%%%%%%%%%%%%%%%%%%%%%%%%%%%%%%%%%%%%%%%%%%%%%%%%%%%%%%%%%%%%%%%
%%%%%%%%%%%%%%%%%%%%%%%%%%%%%%%%%%%%%%%%%%%%%%%%%%%%%%%%%%%%%%%%%%%%%%%%%%%%

\section{Properties of a tetrahedron index}
\label{sec.properties}

\subsection{Part (d) of Theorem \ref{thm.1}}
\label{sub.partd}

Consider a function $f(m,e)$ of two discrete integer 
variables $e,m$ which satisfies Equations
\eqref{eq.rec1} and \ref{eq.rec2}. An application of the 
{\tt HolonomicFunctions.m} computer algebra package \cite{Kou} implies that
$f(m,e)$ also satisfies equations \eqref{eq.rec1a} and \eqref{eq.rec2a}.

\subsection{The rank of $V$:  part (a) of Theorem \ref{thm.1}}
\label{sub.rankV}

An application of the {\tt HolonomicFunctions.m} computer algebra package 
\cite{Kou} implies that the linear $q$-difference operators corresponding 
to the recursions of Equations \eqref{eq.rec1} and \eqref{eq.rec2}
is a Gr\"obner basis and the corresponding module has rank $2$. Said
differently, $f(m,e)$ is a unique $\BZ[q^{\pm 1/2}]$ linear combination of 
$A$ and $B$ where $A=f(0,0)$ and $B=f(0,1)$.

\subsection{The rank of $V_+$:  part (b) of Theorem \ref{thm.1}}
\label{sub.rankV+}

Consider a function $f(m,e)$ of two discrete integer 
variables $e,m$ which satisfies Equations
\eqref{eq.rec1} and \ref{eq.rec2}. Section \ref{sub.partd} implies 
that $f(0,e)$ satisfies the 3-term recursion
\be
\label{eq.recI2}
f(0, e) - (2 - q^{e-1}) f(0, e - 1) + f(0, e - 2)=0
\ee
for all integers $e$. It follows that for every integer $e$,
$f(0,e)$ is a $\BZ[q^{\pm 1}]$-linear combination of $A$ and $B$ where
$f(0,0)=A$ and $f(0,1)=B$. An induction on $e<0$ using the recursion 
relation \eqref{eq.recI2} shows that for all $e<0$ we have
$$
f(0,e) = q^{-\frac{e^2}{2}-\frac{e}{2}} \left( p_1(e) A + p_2(e) B \right)
$$
where $p_1(e), p_2(e) \in \BZ[q]$ are polynomials of maximum $q$-degree 
$e^2/2+e/2$ and constant term $(-1)^{e-1}$ and $(-1)^e$ respectively.
For example, we have:

\begin{eqnarray*}
f(0,-1)&=& A-B \\
q f(0,-2) &=&
A (-1+q) + B (1-2 q)
\\
q^3 f(0,-3) &=&
A \left(1-q-2 q^2+q^3\right) + B \left(-1+2 q+2 q^2-3 q^3\right)
\\
q^6 f(0,-4) &=&
A \left(-1+q+2 q^2+q^3-2 q^4-3 q^5+q^6\right) \\ & & 
+B \left(1-2 q-2 q^2+q^3+4 q^4+3 q^5-4 q^6\right)
\\
q^{10} f(0,-5) &=&
A \left(1-q-2 q^2-q^3+5 q^5+3 q^6+q^7-3 q^8-4 q^9+q^{10}\right) \\ & & 
+B \left(-1+2 q+2 q^2-q^3-2 q^4-7 q^5+3 q^7+6 q^8+4 q^9-5 q^{10}\right)
\end{eqnarray*}
Let us write
$$
A=\sum_{n=0}^\infty a_n q^n  \qquad
B=\sum_{n=0}^\infty b_n q^n \,.
$$
If we assume that $f(0,e) \in \BZ[[q]]$, this imposes a system of linear 
equations on the coefficients $a_n$ and $b_n$ of $A$ and $B$. In fact,
for fixed $e <0$, the system of equations $\coeff(f(0,e),q^j)=0$
for $j=-e^2/2-e/2, \dots, -2,-1$ is a triangular system
of linear equations with unknowns $b_j$ for $j=0,1,\dots,e^2/2-e/2-1$
where all diagonal entries of the coefficient matrix are $1$. For example,
we have:
$$
\left(
\begin{array}{cccccc}
 1 & 0 & 0 & 0 & 0 & 0 \\
 -2 & 1 & 0 & 0 & 0 & 0 \\
 -2 & -2 & 1 & 0 & 0 & 0 \\
 1 & -2 & -2 & 1 & 0 & 0 \\
 4 & 1 & -2 & -2 & 1 & 0 \\
 3 & 4 & 1 & -2 & -2 & 1 \\
\end{array}
\right) 
\left(
\begin{array}{c}
b_0 \\
b_1 \\
b_2 \\
b_3 \\
b_4 \\
b_5
\end{array}
\right)
=
\left(
\begin{array}{c}
 -a_{0} \\
 a_{0}-a_{1} \\
 2 a_{0}+a_{1}-a_{2} \\
 a_{0}+2 a_{1}+a_{2}-a_{3} \\
 -2 a_{0}+a_{1}+2 a_{2}+a_{3}-a_{4} \\
 -3 a_{0}-2 a_{1}+a_{2}+2 a_{3}+a_{4}-a_{5} \\
\end{array}
\right)
$$
It follows that $b_n$ is a $\BZ$-linear combination of $a_k$ for $k \leq n$.
This proves that the rank of the $\BZ[[q]]$-module $V_+$ is at most $1$.
Since $\ID \in V_+$ (as follows from the proof of Theorem \ref{thm.2}), it
follows that the rank of the $\BZ[[q]]$-module $V_+$ is exactly $1$.
This proves part (b) of Theorem \ref{thm.1}. 
\qed

\begin{corollary}
\label{cor.V+}
The above proof implies that $f \in V_+$ is uniquely determined by
its initial condition $f(0,0) \in \BZ[[q]]$. It follows that if 
$f,g \in V_+$, then
\be
\label{eq.fg00}
g(0,0)f(m,e)=f(0,0)g(m,e)
\ee
for all integers $m$ and $e$.
\end{corollary}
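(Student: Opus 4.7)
The plan is to derive the identity directly from the uniqueness statement already established in the proof of Theorem \ref{thm.1}(b): an element of $V_+$ is determined by its initial value $f(0,0) \in \BZ[[q]]$. The whole content of the corollary is then the translation of "$V_+$ is a rank-one free $\BZ[[q]]$-module" into the concrete statement that any two elements are proportional with the scalar read off from the value at $(0,0)$.

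First I would check that $V_+$ is genuinely a $\BZ[[q]]$-module, so that $\BZ[[q]]$-linear combinations of elements of $V_+$ remain in $V_+$. The defining recursions \eqref{eq.rec1}, \eqref{eq.rec2} are linear in $f$ and therefore survive any $\BZ[[q]]$-linear combination. The parity condition $f(m,e)\in q^{em/2}\BZ((q))$ and the $q$-positivity condition $f(m,e)\in \BZ[[q^{1/2}]]$ are also preserved under addition, subtraction and multiplication by elements of $\BZ[[q]]$. This verification is routine and requires no new input.

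Given $f,g\in V_+$, I would then form
\[
h(m,e) \;:=\; g(0,0)\,f(m,e) \;-\; f(0,0)\,g(m,e).
\]
Since $f(0,0),g(0,0)\in \BZ[[q]]$, the module property from the previous step gives $h\in V_+$. A direct computation yields $h(0,0)=g(0,0)f(0,0)-f(0,0)g(0,0)=0$. The zero function is an element of $V_+$ with initial value $0$, so the uniqueness half of Theorem \ref{thm.1}(b) forces $h\equiv 0$, which is precisely \eqref{eq.fg00}.

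There is no essential obstacle here: once the rank-one conclusion of Theorem \ref{thm.1}(b) is available, the identity is automatic. The only mildly subtle point is verifying that $V_+$ is closed under $\BZ[[q]]$-linear combinations (as opposed to only under $\BZ[[q]]$-scaling of a single element), but this follows immediately from the linearity of the recursions and the closure of $\BZ[[q^{1/2}]]$ under addition, subtraction, and multiplication by $\BZ[[q]]$.
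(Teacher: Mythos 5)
Your argument is correct and is exactly the one the paper intends: the corollary is stated as an immediate consequence of the uniqueness established in the proof of Theorem \ref{thm.1}(b), and your step of forming $h = g(0,0)f - f(0,0)g$, noting $h \in V_+$ with $h(0,0)=0$, and concluding $h \equiv 0$ is the standard way to spell that out. No gaps.
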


%%%% see Mathematica notebook: dimofte/3DIndex/3DIndex.Computations.nb

\subsection{Proof of triality: part (c) of Theorem \ref{thm.1}}
\label{sub.prop.S3}

In this section we prove part (c) of Theorem \ref{thm.1}. Equation 
\eqref{eq.Z3} concerns the following $\BZ/3$-action on $V$.

%%%% see Mathematica notebook: dimofte/3DIndex/3DIndex.Computations.nb

\begin{definition}
\label{def.Sf}
Consider the action $f \mapsto Sf$ on a function 
$f: \BZ^2 \longto \BZ((q^{1/2}))$ given by:
\be
\label{eq.Sf}
Sf(m,e)=(-q^{\frac{1}{2}})^{-e} f(e,-e-m) \,.
\ee
\end{definition}

\begin{proposition}
\label{prop.S3}
\rm{(a)} We have: $S^3=Id$.
\newline
\rm{(b)} If $f \in V$, then $Sf=f$, and of course, also $S^2f=f$.
\end{proposition}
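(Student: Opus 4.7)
The plan is to handle the two parts separately, reducing part (b) to an initial-condition check via the rank-$2$ structure of $V$ from part (a) of Theorem \ref{thm.1}.

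For part (a), I would just compute directly. Iterating Definition \ref{def.Sf}, one finds
\[
S^2 f(m,e)=(-q^{1/2})^{-e}(Sf)(e,-e-m)=(-q^{1/2})^{-e}(-q^{1/2})^{e+m} f(-e-m,m)=(-q^{1/2})^{m} f(-e-m,m),
\]
and one more application gives
\[
S^3 f(m,e)=(-q^{1/2})^{-e}(-q^{1/2})^{-e-m}\cdot (-q^{1/2})^{e+m}\cdot f(m,e)=f(m,e),
\]
so $S^3=\mathrm{Id}$ on all functions $\BZ^2\to\BZ((q^{1/2}))$.

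For part (b), my plan is a two-step argument. First I would show that $S$ preserves $V$: given $f\in V$, I need to check that $Sf$ still satisfies the two recursions \eqref{eq.rec1}, \eqref{eq.rec2} and the parity condition. For \eqref{eq.rec1} applied to $Sf$, after multiplying through by $(-q^{1/2})^{e}$ the expression
\[
q^{e/2}f(e,-e-m-1)-q^{-(m+1)/2}f(e+1,-e-m-1)-f(e,-e-m)
\]
must vanish; this follows by invoking \eqref{eq.rec1} for $f$ at the shifted point $(e,-e-m-1)$ to eliminate $f(e,-e-m-1)$. The check for \eqref{eq.rec2} is completely parallel, using \eqref{eq.rec2} for $f$ at $(e,-e-m+1)$. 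The parity condition is immediate from the linear form of the exponent shift in \eqref{eq.Sf}.

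Once $S\colon V\to V$ is established, part (a) of Theorem \ref{thm.1} (as elaborated in Remark \ref{rem.thm1a}) says $f\in V$ is uniquely determined by the pair $(f(0,0),f(1,0))$, so it suffices to verify $Sf$ agrees with $f$ on those two values. Trivially $Sf(0,0)=f(0,0)$. For $Sf(1,0)=f(0,-1)$, I would apply \eqref{eq.rec1} at $(m,e)=(0,-1)$ to get $f(0,-1)=f(0,0)+q^{-1/2}f(1,-1)$, and \eqref{eq.rec2} at $(m,e)=(1,0)$ to get $f(1,0)=f(0,0)+q^{-1/2}f(1,-1)$; comparing gives $f(0,-1)=f(1,0)$, hence $Sf(1,0)=f(1,0)$. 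By the uniqueness statement of part (a), $Sf=f$. The claim $S^2 f=f$ then follows either by applying $S$ again or, equivalently, from $S^3=\mathrm{Id}$. The main obstacle in the argument is really just the first step, verifying the two recursions for $Sf$; this is a routine but slightly delicate algebraic substitution, and the rest is a direct application of the rank-$2$ structure.
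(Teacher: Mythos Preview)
Your proposal is correct and follows essentially the same route as the paper: first verify $S$ maps $V$ to $V$ by checking that the two recursions for $Sf$ reduce to instances of \eqref{eq.rec1} and \eqref{eq.rec2} for $f$ at the shifted points $(e,-e-m-1)$ and $(e,-e-m+1)$, then invoke the rank-$2$ structure of $V$ and match two initial values. The only cosmetic difference is that you check the pair $(f(0,0),f(1,0))$ while the paper checks $(f(0,0),f(0,1))$; both determine an element of $V$ uniquely, and your computation $f(0,-1)=f(1,0)$ via \eqref{eq.rec1} at $(0,-1)$ and \eqref{eq.rec2} at $(1,0)$ is clean and correct.
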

Part (c) of Theorem \ref{thm.1} follows from part (b) of the above
proposition.

\begin{proof}(of Proposition \ref{prop.S3})
Part (a) is elementary. For part (b), assume that $f$ satisfies Equation 
\eqref{eq.rec1} for all $(m,e)$. Replace $(m,e)$ by $(e,-1-e-m)$ in
\eqref{eq.rec1} and we obtain that
\be
\label{eq.em1}
-f(e, -1 - e - m) + q^{-\frac{e}{2}} f(e, -e - m) + 
 q^{-\frac{1}{2} - \frac{e}{2} - \frac{m}{2}} f(1 + e, -1 - e - m)=0 \,.
\ee
Now, replace $f$ by $Sf$ in the left hand side of Equation \eqref{eq.rec1},
and compute that the result is given by 
$$
(-1)^{e+1}\left( -f(e, -1 - e - m) + q^{-\frac{e}{2}} f(e, -e - m) + 
 q^{-\frac{1}{2} - \frac{e}{2} - \frac{m}{2}} f(1 + e, -1 - e - m) \right)
$$ 
The above vanishes from Equation \eqref{eq.em1}.

Likewise, assume that $f$ satisfies Equation 
\eqref{eq.rec2} for all $(m,e)$. Replace $(m,e)$ by $(e,1-e-m)$ in
\eqref{eq.rec2} and we obtain that
\be
\label{eq.em2}
q^{\frac{1}{2} - \frac{e}{2} - \frac{m}{2}} f(-1 + e, 1 - e - m) 
+ q^{-\frac{e}{2}} f(e, -e - m) -  f(e, 1 - e - m)=0
\ee
Now, replace $f$ by $Sf$ in the left hand side of Equation \eqref{eq.rec2},
and compute that the result is given by 
$$
(-1)^{e+1}\left( q^{\frac{1}{2} - \frac{e}{2} - \frac{m}{2}} f(-1 + e, 1 - e - m) + q^{-\frac{e}{2}} f(e, -e - m) - 
 f(e, 1 - e - m) \right)
$$
It follows that if $f \in V$, then the above vanishes from Equation 
\eqref{eq.em2}. In other words, if $f \in V$ then $Sf \in V$. 
To conclude that $f=Sf$, it suffices 
to show (by part (a) of Theorem \ref{thm.1}) that $f(0,0)=(Sf)(0,0)$.
If $f(0,0)=A$, $f(0,1)=B$, using Remark \ref{rem.thm1a} we have:
$$
(Sf)(0,0)=f(0,0)=A \qquad (Sf)(0,1)=f(0,1)+q^{-\frac{1}{2}}f(1,-1)=
B + q^{-\frac{1}{2}} (- B q^{\frac{1}{2}})=0 \,.
$$
This concludes the proof of Proposition \ref{prop.S3}.
\end{proof}

\subsection{$\ID$ is a tetrahedron index}
\label{sub.isindex}

Observe that by its definition, 
$$
\ID(m,e)=\sum_{e \in \BZ} S(m,e,n) 
$$ 
is given by a one-dimensional sum of a {\em proper $q$-hypergeometric term}
(\cite{WZ,PWZ}) 
$$
S(m,e,n)=(-1)^n \frac{q^{\frac{1}{2}n(n+1)
-\left(n+\frac{1}{2}e\right)m}}{(q)_n(q)_{n+e}} \,.
$$
It follows by \cite{WZ} that $\ID(m,e)$ is $q$-holonomic in both variables
$m$ and $e$. Moreover, recursion relations for $\ID(m,e)$ can be found
by the creative telescoping method of \cite{WZ}. For instance, $S$
satisfies the recursion
\be
\label{eq.Scert2}
q^{\frac{e}{2}} S(m - 1, e, n)+ q^{-\frac{m}{2}} S(m, e - 1, n) -S(m,e,n)=0
\ee
which implies that $\ID$ satisfies Equation \eqref{eq.rec2}. To prove
Equation \eqref{eq.Scert2}, divide by $S(m,e,n)$ and use the fact that
$$
q^{\frac{e}{2}} \frac{S(m - 1, e, n)}{S(m,e,n)}=q^{e + n}
\qquad
 q^{-\frac{m}{2}} \frac{S(m, e - 1, n)}{S(m,e,n)}=1-q^{e + n} \,.
$$
The proof of Equation \eqref{eq.rec1} is similar. For an alternative
proof, using the quantum dilogarithm, see Section \ref{sec.QDL}.

\subsection{The degree of $\ID$}
\label{sub.degDI}

\begin{proof}(of Lemma \ref{lem.degID})
Consider the fan $F$ of $\BR^2$ with rays $(1,0)$, $(0,1)$ and $(1,-1)$.
Observe that the linear transformation $(m,e)\mapsto(e,-e-m)$ (which
appears in Definition \ref{def.Sf}) rotates the three cones of the fan
$F$, and preserves the piece-wise quadratic polynomial that appears in
Lemma \ref{lem.degID}. Since $\ID \in V$ (by Section \ref{sub.isindex}) 
and $V$ is pointwise invariant under $S$ (by Proposition \ref{prop.S3}), 
it suffices to compute $\d(m,e)$ when $(m,e)$ lies in the cone 
$m \leq 0, e \geq 0$. In that case, Equation \eqref{eq.ID} gives
$$
\ID(m,e)=\sum_{n=0}^\infty (-1)^n \frac{q^{\frac{1}{2}n(n+1)
-\left(n+\frac{1}{2}e\right)m}}{(q)_n(q)_{n+e}}
$$
If $\d(m,e,n)$ denotes the degree of the summand, using $m \leq 0$, $n \geq 0$
we get
$$
\d(m,e,n)=\frac{1}{2}\left(n(n+1)\right) -\left(n+\frac{1}{2}e\right)m 
\geq -\frac{em}{2}\,,
$$
with equality achieved uniquely at $n=0$. It follows that the degree of
$I(m,e)$ in this cone is given by $-em/2$. 
\end{proof}

\subsection{Proof of Theorem \ref{thm.2}}
\label{sub.thm.2}

First we show that $\ID$ satisfies the required equations: 
\begin{itemize}
\item[(a)] $\ID \in V$ from Section \ref{sub.isindex}. Lemma \ref{lem.degID}
and Equation \eqref{eq.degID} manifestly imply that
$\d(\ID(m,e)) \geq 0$ for all integers $m$ and $e$. Thus, $\ID \in V_+$.
Moreover, $\ID(0,0)=1+O(q)$.
\item[(b)]
$\ID$ satisfies the pentagon identity from Section 
\ref{sub.pentagon}.
\end{itemize}

It remains to show the uniqueness part in Theorem \ref{thm.2}. Suppose
$f \in V_+$ satisfies the pentagon and $f(0,0)(0) \neq 0$. Corollary
\ref{cor.V+} implies that $f(m,e)(q)=C(q)\ID(e,m)(q)$ for some $C(q)
\in \BQ((q))$. 
Consider the special pentagon for $f$ and $\ID$:
$$
f(0,0)^2=\sum_{e \in \BZ} f(0,e)^3 q^e 
\qquad \ID(0,0)^2=\sum_{e \in \BZ} \ID(0,e)^3 q^e
\,.
$$
It follows that $C(q)^2=C(q)^3$ and since $C(q) \neq 0$, we get $C(q)=1$.
This concludes the uniqueness part of Theorem \ref{thm.2}.
\qed

%%%%%%%%%%%%%%%%%%%%%%%%%%%%%%%%%%%%%%%%%%%%%%%%%%%%%%%%%%%%%%%%%%%%%%%%%%%%
%%%%%%%%%%%%%%%%%%%%%%%%%%%%%%%%%%%%%%%%%%%%%%%%%%%%%%%%%%%%%%%%%%%%%%%%%%%%

\section{Convergence of the 3D index}
\label{sec.convergence}

\subsection{Proof of Theorem \ref{thm.IMconv}}
\label{sub.pf.thm.IMconv}

In this section we prove Theorem \ref{thm.IMconv}.
We begin by a well-known lemma due to Farkas \cite{Ziegler}.

\begin{lemma}
\label{lem.farkas}
Fix finite collections $\calA=\{a_1,\dots,a_r\}$ and
$\calB=\{b_1,\dots,b_s\}$ of vectors in $\BR^N$. 
The following are equivalent:
\begin{itemize}
\item[(a)] there does not exist $v \neq 0$ such that $a_i \cdot v \geq 0$ 
for $i=1,\dots,r$ and $b_j \cdot v =0$ for $j=1,\dots,s$.
\item[(b)] $\calA \cup \calB$ span $\BR^N$ and there exist $\a_i >0$
for $i=1,\dots,r$ and $\ga_j \in \BR$ for $j=1,\dots,s$ such that 
$0=\sum_i \a_i a_i + \sum_j \ga_j b_j$.
\end{itemize}
\end{lemma}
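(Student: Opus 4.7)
The plan is to prove the two directions separately. The implication (b) $\Rightarrow$ (a) reduces to a quick inner-product argument, while (a) $\Rightarrow$ (b) reduces, after passage to a quotient, to the standard fact that a closed convex cone with trivial dual is the whole ambient space.

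For (b) $\Rightarrow$ (a), assume $\calA\cup\calB$ spans $\BR^N$ and $0 = \sum_i \a_i a_i + \sum_j \ga_j b_j$ with $\a_i > 0$. Suppose for contradiction that some $v \neq 0$ satisfies $a_i\cdot v \geq 0$ for all $i$ and $b_j\cdot v = 0$ for all $j$. Taking the inner product of the relation with $v$ yields $\sum_i \a_i (a_i\cdot v) = 0$; strict positivity of the $\a_i$ combined with $a_i\cdot v \geq 0$ forces each $a_i\cdot v = 0$. Together with $b_j\cdot v = 0$ this makes $v$ orthogonal to a spanning set, forcing $v=0$, a contradiction.

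For (a) $\Rightarrow$ (b), I would first observe that $\calA\cup\calB$ must span $\BR^N$: otherwise any nonzero vector in its orthogonal complement would satisfy the conditions of (a) vacuously. Next, let $L := \mathrm{span}(\calB)$ and $\pi: \BR^N \to \BR^N/L$ denote the projection. Under the canonical identification of $L^\perp$ with $(\BR^N/L)^*$, hypothesis (a) says that no nonzero linear functional on $\BR^N/L$ is nonnegative on every $\pi(a_i)$, while the spanning condition says the $\pi(a_i)$ span $\BR^N/L$.

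The core step is to deduce from these two facts that $0$ is a strictly positive combination of the $\pi(a_i)$. Consider the cone $C = \{\sum_i \lambda_i \pi(a_i) : \lambda_i \geq 0\} \subseteq \BR^N/L$, which is closed because it is finitely generated. Its dual cone is precisely the set of forbidden functionals, so $C^* = \{0\}$; by the bipolar theorem this forces $C = \BR^N/L$. In particular each $-\pi(a_k)$ lies in $C$, so $-\pi(a_k) = \sum_j \lambda^{(k)}_j \pi(a_j)$ with $\lambda^{(k)}_j \geq 0$, and summing the identities $\pi(a_k) + \sum_j \lambda^{(k)}_j \pi(a_j) = 0$ over $k$ produces a relation $\sum_j \a_j \pi(a_j) = 0$ in which each coefficient $\a_j = 1 + \sum_k \lambda^{(k)}_j$ is at least $1$, hence strictly positive. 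Lifting to $\BR^N$, we have $\sum_j \a_j a_j \in L$, so there exist $\ga_j \in \BR$ with $\sum_j \a_j a_j + \sum_j \ga_j b_j = 0$, establishing (b). The main obstacle is the bipolar/separation step used to pass from $C^* = \{0\}$ to $C = \BR^N/L$; the symmetrization trick that upgrades nonnegative coefficients to strictly positive ones is a routine manipulation once that step is in hand.
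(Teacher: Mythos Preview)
Your proof is correct and follows essentially the same arc as the paper's: both directions use the same inner-product argument for (b)$\Rightarrow$(a), and for (a)$\Rightarrow$(b) both reduce to the Farkas/bipolar fact that a closed finitely generated cone with trivial dual is the whole space, then use the same symmetrization trick (writing each $-a_k$, or equivalently $-\sum_i a_i$, as a nonnegative combination and rearranging) to upgrade nonnegative coefficients to strictly positive ones.

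The one methodological difference is how the equality constraints $b_j\cdot v=0$ are absorbed. The paper replaces each equality by the pair of inequalities $b_j\cdot v\ge 0$ and $(-b_j)\cdot v\ge 0$ and works with the cone generated by $\calA\cup\calB\cup(-\calB)$ inside $\BR^N$; you instead quotient by $L=\Span(\calB)$ and work with the cone generated by the $\pi(a_i)$ in $\BR^N/L$. These are the two standard, equivalent ways to eliminate linear equalities in a Farkas-type argument: doubling keeps the ambient space and avoids the quotient/dual identification $L^\perp\cong(\BR^N/L)^*$, while your quotient makes the reduction to a pure inequality problem slightly more transparent. Neither buys anything the other doesn't.
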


\begin{proof}
(a) is equivalent to 

\begin{itemize}
\item[(c)]  there does not exist $v \neq 0$ such that $a_i \cdot v \geq 0$ 
for $i=1,\dots,r$ and $b_j \cdot v \geq 0$ for $j=1,\dots,s$ and
$(-b_j) \cdot v \geq 0$ for $j=1,\dots,s$.
\end{itemize}
(c) implies (b).
Let $C$ denote the cone spanned by $\calA \cup \calB \cup -\calB$.
(c) states that $C$ not contained in any half-space through the
origin. By Farkas' lemma \cite{Ziegler}, it follows that $C=\BR^N$. Thus,
$\calA \cup \calB \cup -\calB$ spans $\BR^N$ and $-\sum_i a_i \in C$.
(b) follows.

(b) implies (c): consider $v$ such that $a_i \cdot v \geq 0$ and $b_j \cdot
v =0$ for all $i,j$. We know there exist $\a_i>0$ and $\ga_j$ real such that
 $0=\sum_i \a_i a_i + \sum_j \ga_j b_j$. Taking inner product with $v$,
it follows that $0=\sum_i \a_i a_i \cdot v$. Since $\a_i >0$ and $ a_i \cdot v
\geq 0$ for all $i$, it follows that $ a_i \cdot v =0$ for all $i$. Thus,
$v$ is perpendicular to $\calA \cup \calB$ which is assumed to span $\BR^N$.
Thus $v=0$ and (c) follows.
\end{proof}

The next lemma concerns super-linear polynomial functions on a cone.
\begin{lemma}
\label{lem.pieceway}
Suppose $C$ is a closed cone in $\BR^r$ and $p: C \longto \BR$ is a polynomial
that satisfies $p(n x) \geq c_x n$ for $n >0$, $x \in C\setminus\{0\}$ 
and $c_x >0$. Then, there exists $c>0$ and $c'>0$ such that $p(x) \geq c|x|$ 
for all $x \in C$ with $|x| \geq c'$.
\end{lemma}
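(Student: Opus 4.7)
The plan is to induct on $d = \deg p$ after reducing to the compact set $S := C \cap S^{r-1}$: writing $y = tx$ with $t = |y| > 0$ and $x \in S$, it suffices to produce $c, T > 0$ with $p(tx)/t \geq c$ for every $x \in S$ and $t \geq T$. The key structural observation is that for each fixed $x \in S$ the one-variable polynomial $t \mapsto p(tx)$ is strictly positive on $t > 0$, so all of its real roots are $\leq 0$ and all of its complex roots have non-positive real part. Writing $p = p_d + p_{d-1} + \cdots + p_0$ as a sum of its homogeneous components, this forces $p_d \geq 0$ on $C$ and the effective leading coefficient $p_{d(x)}(x) > 0$, where $d(x) := \max\{k : p_k(x) \neq 0\}$; factoring $t \mapsto p(tx)$ and using $(t-r)(t-\bar r) \geq t^2$ whenever $\mathrm{Re}(r) \leq 0$ yields the pointwise estimate $p(tx) \geq p_{d(x)}(x)\, t^{d(x)}$ for all $t \geq 0$.

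The base case $d = 1$ is immediate: $p$ is affine, its linear part is strictly positive on $S$ by the hypothesis, hence bounded below by some $\varepsilon > 0$ on compact $S$, and the conclusion $p(y) \geq (\varepsilon/2)|y|$ holds for $|y|$ large. For the inductive step with $d \geq 2$, set $Z := \{x \in S : p_d(x) = 0\}$, a compact subset of $S$. If $Z = \emptyset$, then $p_d \geq \eta > 0$ on $S$ by compactness, and the expansion $p(y) = p_d(y) + O(|y|^{d-1})$ gives $p(y) \geq (\eta/2)|y|^d \geq (\eta/2)|y|$ for $|y|$ large. If $Z \neq \emptyset$, I would apply the inductive hypothesis to the restriction of $p$ to the sub-cone $C_0 := \{ty : t \geq 0,\, y \in Z\}$, on which $p_d \equiv 0$ so that the effective degree drops to $\leq d-1$; the hypothesis of the lemma restricts verbatim to $C_0$. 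This produces constants $\tilde c_0, \tilde c'_0 > 0$ with $p(y) \geq \tilde c_0 |y|$ for all $y \in C_0$ with $|y| \geq \tilde c'_0$.

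The main obstacle is extending the $C_0$-bound to a full cone-neighborhood of $C_0$ in $C$: the naive Taylor comparison of $p(y)$ with $p(|y|x^*)$ for $x^* \in Z$ nearest to $y/|y|$ produces an error of order $|y|^d \cdot \mathrm{dist}(y/|y|, Z)$, which outpaces $\tilde c_0 |y|$ once $d \geq 2$. I would handle this by a finite two-type cover of $S$: (i) the far-from-$Z$ region $\{p_d \geq \eta\}$, on which the leading-term expansion gives $p(y) \geq (\eta/2)|y|^d$, and (ii) a near-$Z$ region $\{p_d < \eta\}$, where the Lojasiewicz inequality for the semi-algebraic function $p_d|_S$ vanishing on $Z$ bounds $\mathrm{dist}(x, Z)$ in terms of $\eta$. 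Crucially, the positivity $p_d(y) \geq 0$ ensures that the top-degree contribution only helps in the comparison, so one only needs to absorb the Lipschitz perturbation of the lower-degree part coming from the $C_0$-bound; choosing $\eta$ sufficiently small (and, if $d \geq 3$, iterating the induction through the successive zero-strata of the lower homogeneous parts on $C_0$) then yields the uniform linear lower bound after passing to a finite subcover.
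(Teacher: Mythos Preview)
Your overall strategy---reduce to the unit sphere $S\cap C$, decompose $p=\sum_k p_k$ into homogeneous parts, observe that the top nonvanishing part $p_{d(x)}$ is strictly positive at each $x$, and then induct via the zero locus of $p_d$---is exactly the paper's approach. The paper phrases the induction as ``induction on the depth of a point'' (the number of top homogeneous parts vanishing at $x$) rather than on $\deg p$, but this is the same stratification; your base case and the case $Z=\emptyset$ match the paper's Case~1 verbatim.

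Two issues. First, the claim that all complex roots of $t\mapsto p(tx)$ have non-positive real part is false: take $q(t)=t\bigl((t-1)^2+1\bigr)=t^3-2t^2+2t$, which satisfies $q(t)/t=(t-1)^2+1\geq 1$ for $t>0$ yet has roots $1\pm i$. Consequently the pointwise estimate $p(tx)\geq p_{d(x)}(x)\,t^{d(x)}$ also fails (same example: $q(2)=4<8=2^3$). You never actually use this estimate downstream, so it is an inessential error, but it should be deleted.

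Second, and more substantively, your extension of the $C_0$-bound to a neighborhood of $Z$ is not settled by the sketch when $d\geq 3$. The Lipschitz perturbation of the degree-$(d-1)$ part against the nearest point $x^*\in Z$ produces an error of order $t^{d-1}\operatorname{dist}(x,Z)$, which swamps the linear bound $\tilde c_0\,t$ no matter how small $\eta$ is; invoking \L{}ojasiewicz only bounds $\operatorname{dist}(x,Z)$ in terms of $p_d(x)$, not in terms of $t$. The positivity $p_d\geq 0$ does help, but to exploit it you must trade $t^{d-1}p_d(x)$ quantitatively against the possibly-negative $t^{d-2}p_{d-1}(x)$ term and then iterate down the flag $Z\supset Z\cap\{p_{d-1}=0\}\supset\cdots$; your parenthetical ``iterating the induction through the successive zero-strata'' is pointing in the right direction but is not yet an argument. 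The paper's own proof is equally terse at precisely this step (its Case~2 is a one-sentence sketch). Note that in the paper's application the degree function is piecewise quadratic, so only $d\leq 2$ is actually needed, and for $d=2$ your argument (and the paper's) goes through cleanly: on $Z$ one has $p_1>0$, hence $p_1\geq\epsilon_0/2$ on a neighborhood by continuity, and $p(tx)/t\geq t\,p_2(x)+p_1(x)+p_0/t\geq \epsilon_0/4$ for large $t$ with no \L{}ojasiewicz required.
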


\begin{proof}
Let $S=\{x \in \BR^r \,| \,\,|x|=1\}$ denote the unit sphere and let
$p=\sum_{k=0}^d p_k(x)$ denote the decomposition of $p$ into homogeneous
polynomials $p_k$ of degree $k$. 
Since $p(n x)=\sum_k n^k p_k(x)$, it follows that that for every 
$x \in S \cap C$ there exists $i$ such that $p_j(x)=0$ for $j>i$ and $
p_i(x) >0$. In particular, $p_d: S \cap C \longto [0,\infty)$.

{\bf Case 1:} $p_d( S \cap C) \subset (0,\infty)$. By compactness,
$p_d(x) \geq c_0>0$ for $x \in S \cap C$ and $|p_k(x)| \leq c_k$ for
$x \in S \cap C$ and $k=1,\dots,d-1$. Thus 
$p(x) \geq c_0 |x|^d - \sum_{k=0}^{d-1}|x|^k c_k \geq c |x|$
for some $c>0$.

{\bf Case 2:} There exists $x \in S \cap C$ such that $p_d(x)=0$
and $p_{d-1}(x) >0$. Argue as above using the complement of a neighborhood
of $x$ where $p_1$ is strictly positive, and conclude the proof by induction
on the depth of a point.
\end{proof}

Consider the restriction 
\be
\label{eq.Irho}
I^{\rho}_{\mb M}(m,e)(q)=\sum_{n \in \BN}
q^{\frac{n}{2} v \cdot k_0}
\prod_{i=1}^r \ID(m_i-n b_i \cdot k_0, e_i+n a_i \cdot k_0)
\ee
of the sum that defines $I_{\mb M}$
on a ray $\rho=\BN k_0$ for $k_0 \in \BZ^r$, $k_0 \neq 0$.
Consider the union $R$ of the 3 rays in $\BR^2$ %with coordinates $(m,e)$
shown in Figure \ref{fig.3rays}.

\begin{figure}[htpb]
\begin{center}
\includegraphics[height=0.12\textheight]{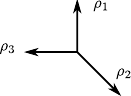}
\caption{The degree of the tetrahedron index.}
\label{fig.3rays}
\end{center}
\end{figure}

If $\rho=\BN k_0$ is a fixed ray, let $x=\mb B^T k_0=(x_1,\dots,x_s)$
and $y=\mb A^T k_0=(y_1,\dots,y_s)$.

\begin{lemma}
\label{lem.IMrho}
\rm{(a)} If $(-x_i,y_i) \not\in R$ for some $i=1,\dots,s$, then 
$I^{\rho}_{\mb M}(m,e)$ converges for all $m,e$.
\newline
\rm{(b)} If $(-x_i,y_i) \in R$ for all $i=1,\dots,s$. Then, there exist
$Q \in \{1,\dots,s\}\to\{1,2,3\}$ such that $(-x_i,y_i) \in \rho_{Q(i)}$
 for all $i=1,\dots,s$. Then $I^{\rho}_{\mb M}$ does not converge if and
only if %for all $Q \in \{1,\dots,s\}\to\{1,2,3\}$
all of the following inequalities hold:
\begin{subequations}
\label{eq.nok0}
\begin{align}
b_i \cdot k_0 & =0, &  a_i \cdot k_0 & \geq 0, & (-v) \cdot k_0 & \leq 0
& \text{if} \quad Q(i)=1 \\
(a_i-b_i) \cdot k_0 & =0, &  (-b_i) \cdot k_0 & \geq 0, & (-v+b_i) 
\cdot k_0 & \leq 0
& \text{if} \quad Q(i)=2 \\
(-a_i) \cdot k_0 & =0, &  (-a_i+b_i) \cdot k_0 & \geq 0, & (-v+a_i) 
\cdot k_0 & \leq 0 & \text{if} \quad Q(i)=3
\end{align}
\end{subequations}
\end{lemma}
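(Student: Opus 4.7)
The plan follows the dichotomy (a) vs.\ (b), leaning throughout on the explicit formula for the degree function $\d(m,e)$ from Lemma~\ref{lem.degID}.

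\smallskip
\noindent\textbf{Part (a).} Assume $(-x_i, y_i) \notin R$ for some index $i$. By Lemma~\ref{lem.degID}, $\d(m,e)$ is non-negative and piecewise quadratic, vanishing or linear along $R$, and having a strictly positive-definite quadratic leading form on the interior of each of the three maximal cones of $\mathbb{R}^2 \setminus R$. Since $(-x_i,y_i)$ lies in the interior of one such cone, $n \mapsto \d(m_i - n x_i, e_i + n y_i)$ is eventually a quadratic polynomial in $n$ with strictly positive leading coefficient. Applying Lemma~\ref{lem.pieceway} to the ray through $(-x_i, y_i)$ yields $\d(m_i - n x_i, e_i + n y_i) \geq c\, n^2$ for some $c>0$ and all sufficiently large $n$. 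Because every other factor of the summand contributes non-negative $q$-degree (as $\d \geq 0$) and the prefactor $q^{\frac{n}{2}\, v\cdot k_0}$ is only linear in $n$, the degree $D(n)$ of the $n$-th summand satisfies $D(n) \geq c\, n^2 - Cn$, so $D(n) \to +\infty$ and $I^{\rho}_{\mb M}(m,e)$ converges in $\BZ((q^{1/2}))$ for every $(m,e)$.

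\smallskip
\noindent\textbf{Part (b).} Now $(-x_i, y_i) \in R$ for every $i$, and we fix $Q$ with $(-x_i,y_i) \in \rho_{Q(i)}$ (uniquely determined when $(-x_i,y_i)\neq 0$). For each $i$, the trajectory $n \mapsto (m_i - n x_i, e_i + n y_i)$ is a half-line parallel to $\rho_{Q(i)}$, and a case-by-case application of the piecewise formula for $\d$ shows that the degree along this trajectory is eventually a non-negative affine function of $n$ with slope
\[
s_i \;=\; \begin{cases}
\tfrac{1}{2}\,|m_i|\,(a_i \cdot k_0) & \text{if } Q(i)=1,\\[2pt]
\tfrac{1}{2}\,(-b_i\cdot k_0)\bigl(|m_i+e_i|+1\bigr) & \text{if } Q(i)=2,\\[2pt]
\tfrac{1}{2}\,|e_i|\,(b_i\cdot k_0) & \text{if } Q(i)=3
\end{cases}
\]
(with $s_i = 0$ in the degenerate case $(-x_i,y_i) = (0,0)$). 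Consequently $D(n)$ is eventually affine in $n$ with slope $c = \tfrac{1}{2}\, v\cdot k_0 + \sum_i s_i$, and $I^{\rho}_{\mb M}(m,e)$ fails to lie in $\BZ((q^{1/2}))$ precisely when $c \leq 0$. The first two per-$i$ inequalities of the lemma encode the membership $(-x_i,y_i) \in \rho_{Q(i)}$, while the third ($v$-inequality) is obtained by dualizing, via Lemma~\ref{lem.farkas}, the feasibility of $c \leq 0$ in the free parameters $(m,e)$ into a condition on $k_0$ alone.

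\smallskip
\noindent\textbf{Main obstacle.} The technical heart is the slope computation for $Q(i) = 2$: the trajectory $(m_i + n t_i,\, e_i - n t_i)$ with $t_i = -b_i\cdot k_0 \geq 0$ crosses different pieces of the formula for $\d$ depending on $\mathrm{sgn}(m_i + e_i)$, and the uniform ``$+1$'' in the slope comes from the $\max\{0,m,-e\}$ summand in Lemma~\ref{lem.degID}. A secondary subtlety is the degenerate case $(-x_i,y_i) = (0,0)$, in which $Q(i)$ is not constrained by the rays and one must check that each of the three allowable choices of $Q(i)$ produces equivalent per-$i$ inequalities. Finally, the derivation of the per-$i$ form of the $v$-inequality from the (a priori summed) extremal-slope condition requires a careful application of the theorem of the alternative, exploiting the block structure of the linear system defining an index structure.
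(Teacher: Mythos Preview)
Your part (a) reaches the right conclusion, but the appeal to Lemma~\ref{lem.pieceway} is both misapplied and unnecessary: that lemma only delivers a \emph{linear} lower bound $p(x)\ge c|x|$, not the quadratic bound $cn^2$ you claim, and in any case the paper does not use it here at all (it is reserved for the later step of passing from rays to the full lattice in Theorem~\ref{thm.IMconv}). All you need is the observation that $\d(m_i-nx_i,e_i+ny_i)$ is eventually quadratic in $n$ with strictly positive leading coefficient once $(-x_i,y_i)\notin R$, which follows immediately from the homogeneity of $\d_2$.

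Part (b) is where your approach genuinely diverges from the paper, and there is a real gap. The paper's proof is much shorter: it simply sets $(m,e)=(0,0)$, so that the argument of each $\ID$ is $(-nx_i,ny_i)$ and the degree of the summand is exactly $n^2\sum_i\d_2(-x_i,y_i)+n\bigl(\sum_i\d_1(-x_i,y_i)+\tfrac12 v\cdot k_0\bigr)$, with $\d_1,\d_2$ the homogeneous linear and quadratic parts of $\d$ from Lemma~\ref{lem.degID}. Divergence is then equivalent to ``all $\d_2(-x_i,y_i)=0$'' (the ray-membership equalities and inequalities in the first two columns of \eqref{eq.nok0}) together with ``linear coefficient $\le0$'' (the $v$-column). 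No duality argument is involved; the inequalities are read off directly.

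By contrast, you keep $(m,e)$ general, correctly compute the slopes $s_i(m_i,e_i)$, but then have to reduce the $(m,e)$-dependent condition $\tfrac12 v\cdot k_0+\sum_i s_i\le0$ to a condition on $k_0$ alone. You propose to do this by ``dualizing via Lemma~\ref{lem.farkas}'', but you never carry it out, and your own ``Main obstacle'' paragraph concedes that this step is the technical heart and remains undone. In fact the invocation of Farkas here conflates this lemma with the subsequent proof of Theorem~\ref{thm.IMconv}: Farkas is what translates the \emph{non-existence} of a bad $k_0$ (over all $Q$) into the existence of an index structure, and it belongs to that later argument, not to the per-ray analysis of this lemma. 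Setting $(m,e)=(0,0)$ from the outset eliminates all of this extra work.
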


\begin{proof}
(a) Without loss of generality, let us assume $m=e=0$. In that case,
the degree of the summand in Equation \eqref{eq.Irho} is given by
$$
n^2 \sum_{i=1}^s \d_2(-x_i,y_i) + n \sum_{i=1}^s \d_1(-x_i,y_i) +
\frac{n}{2} v \cdot k_0  
$$
where $\d_1$ and $\d_2$ are piece-wise quadratic and linear functions given
by Figure \ref{fig.delta12}.

\begin{figure}[htpb]
\begin{center}
\includegraphics[height=0.12\textheight]{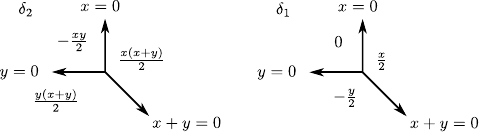}
\caption{Piece-wise quadratic and linear functions $\delta_2$ and $\delta_1$.}
\label{fig.delta12}
\end{center}
\end{figure}

If $(-x_i,y_i) \not\in R$ for some $i=1,\dots,s$, it follows that the degree
of the summand is a quadratic function of $n$ with nonvanishing leading
term, thus $I^{\rho}_{\mb M}$ converges.

(b) The above computation shows that  $I^{\rho}_{\mb M}(0,0)$ diverges
if and only if $\d_2(-x_i,y_i)=0$ for all $i=1,\dots,s$ and in addition
the coefficient of $n$ is less than or equal to zero. The first condition
is equivalent to $(-x_i,y_i) \in R$ for all $i$ and together with the second
one, they are equivalent to the inequalities \eqref{eq.nok0}.
\end{proof}

\begin{proof}(of Theorem \ref{thm.IMconv})
Lemma \ref{lem.pieceway} implies that $I_{\mb M}$ converges if and only 
$I^{\rho}_{\mb M}$ converges for all rays $\rho$. This is true since
the degre of the summand of $I_{\mb M}$ is a piece-wise quadratic 
polynomial. Lemma \ref{lem.IMrho} gives necessary and sufficient 
conditions for the convergence of $I^{\rho}_{\mb M}$. It remains to match
these conditions with the definition of an index structure on $\mb M$
using Lemma \ref{lem.farkas}.

The above discussion implies that $I_{\mb M}$ is convergent if and only if
for every $Q \colon \{1,\dots,s\}\to\{1,2,3\}$, 
there does not exist $k_0 \neq 0$ such that Equation 
\eqref{eq.nok0} holds. Assume for simplicity that $s=1$. 

{\bf Case 1:} If $Q(1)=1$ Inequality \eqref{eq.nok0}
and Lemma \ref{lem.farkas} implies that there exist $\a_1>0$ and $\ga_1$
real such that $v=\a_1 a_1 + \ga_1 b_1$. Define $\b_1=1-\a_1-\ga_1$.

{\bf Case 2:} If $Q(1)=2$ Inequality \eqref{eq.nok0}
and Lemma \ref{lem.farkas} implies that there exist $\a'_1>0$ and $\ga'_1$
real such that $v-b_1=\a'_1 (-b_1) + \ga'_1(a_1- b_1)$. Letting
$(\a_1,\b_1,\ga_1)=(\ga'_1,\a'_1,-\ga'_1-\a'_1+1) $ it follows that
$$
v=\a_1 a_1 + \ga_1 b_1, \qquad \b_1 > 0 \,.
$$

{\bf Case 3:} If $Q(1)=3$ Inequality \eqref{eq.nok0}
and Lemma \ref{lem.farkas} implies that there exist $\a'_1>0$ and $\ga'_1$
real such that $v-a_1=\a'_1 (-a_1 + b_1) + \ga'_1 (-a_1) $. Letting
$(\a_1,\b_1,\ga_1)=(1-\a'_1-\ga'_1,\ga'_1,\a'_1) $ it follows that
$$
v=\a_1 a_1 + \ga_1 b_1, \qquad \ga_1 > 0 \,.
$$
It follows that $\mb M$ admits an index structure.

The general case of $s$ follows as above. Indeed for each 
$Q: \{1,\dots,s\}\to \{1,2,3\}$, assume $(-x_i,y_i) \in \rho_{Q(i)}$
for $i=1,\dots,s$. Then $I^{\rho}_{\mb M}$ convergences if and only if
there exists $(\a,\b,\ga)$ that satisfies Equations 
\eqref{eq.index.structure} and inequalities \eqref{eq.index.ineq}.
This completes the convergence proof of Theorem \ref{thm.IMconv}.
$q$-holonomi\-city follows from the main theorem of Wilf-Zeilberger 
\cite{WZ}, using the fact that $I_{\mb M}(m,e)$ is a $2r$-dimensional 
sum of a proper $q$-hypergeometric summand.  
\end{proof}

\subsection{An independent proof of convergence for strict index 
structures}
\label{sub.independent.strict}

Theorem \ref{thm.IMconv} implies that $I_{\mb M}$ converges when 
$\mb M$ admits a strict index structure. In this section we give an 
independent proof of this fact without using the restriction of the summand
of the index to a ray.

\begin{proposition}
\label{prop.IMconv.strict}
If $\mb M$ supports a strict index structure, 
then $I_{\mb M}(m,e)(q) \in \BZ((q^{1/2}))$ 
is convergent for all $m,e \in \BZ^s$.
\end{proposition}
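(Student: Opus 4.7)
The plan is to bound below the $q$-degree of each term in the sum \eqref{eq.IM} defining $I_{\mb M}(m,e)$ by a quantity that grows linearly in $\|k\|$, using only Lemma \ref{lem.degID} and the strict positivity of the angle assignment. Convergence in $\BZ((q^{1/2}))$ will follow, since the coefficient of any fixed power of $q$ then receives contributions only from $k$ in a bounded region.

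Let $(\alpha,\beta,\gamma)\in\BQ_+^{3s}$ be the assumed strict index structure (so $\alpha_i,\beta_i,\gamma_i>0$, $\alpha_i+\beta_i+\gamma_i=1$, and $v=\mb A\alpha+\mb B\gamma$), and write
\[
D(k)\;:=\;\tfrac12\,v\cdot k\;+\;\sum_{i=1}^s \delta\!\bigl(w_i(k)\bigr),\qquad w_i(k)\;:=\;\bigl(m_i-b_i\cdot k,\;e_i+a_i\cdot k\bigr),
\]
for the degree of the $k$th summand. Lemma \ref{lem.degID} gives the elementary bound $\delta(m,e)\geq\tfrac12\max\{0,m,-e\}$, and for any probability triple $(p_1,p_2,p_3)$ the convex-combination inequality $\max\{0,m,-e\}\geq p_2m-p_3 e$ holds. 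I would apply this at the $i$th factor with weights $(p_1,p_2,p_3)=(\beta_i,\gamma_i,\alpha_i)$; after summing and using $v=\mb A\alpha+\mb B\gamma$, the terms linear in $k$ cancel exactly, producing a bound
\[
D(k)\;\geq\;C_0\;+\;\tfrac12\sum_{i=1}^s \sigma_i\!\bigl(w_i(k)\bigr),
\]
where $C_0=\tfrac12(\gamma\cdot m-\alpha\cdot e)$ is independent of $k$ and $\sigma_i(m,e):=\max\{0,m,-e\}-\gamma_i m+\alpha_i e\geq 0$ is the slack in the convex-combination estimate at the $i$th factor.

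The main obstacle is to upgrade the nonnegativity of each $\sigma_i$ to a positive multiple of the Euclidean norm, and this is precisely where the \emph{strictness} of the index structure is used. Since all three weights $\alpha_i,\beta_i,\gamma_i$ are strictly positive, the inequality $\max\{x_1,x_2,x_3\}\geq\sum p_jx_j$ is tight only when $x_1=x_2=x_3$; applied to $(0,m,-e)$ this forces $m=e=0$. Hence $\sigma_i$ vanishes only at the origin, and being continuous, piecewise linear, and positively homogeneous of degree $1$, compactness of the unit circle in $\BR^2$ yields constants $c_i>0$ with $\sigma_i(m,e)\geq c_i\|(m,e)\|$ on all of $\BR^2$.

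Finally, the rank condition $\rk(\mb A|\mb B)=r$ from Definition \ref{def.strictindex} makes the linear map $k\mapsto (b_i\cdot k,\,a_i\cdot k)_{i=1}^s$ from $\BR^r$ to $\BR^{2s}$ injective, so a second compactness argument on the unit sphere of $\BR^r$ provides $c'>0$ and a constant $c''$ (depending on $m,e$) with $\sum_i\|w_i(k)\|\geq c'\|k\|-c''$. Combining these estimates gives $D(k)\geq C_1+c\,\|k\|$ for some $c>0$, and the proposition follows. This argument sidesteps the ray analysis of Lemma \ref{lem.IMrho} entirely by exploiting strict positivity of $(\alpha,\beta,\gamma)$ as a single global device that both cancels the first-order behavior of $D(k)$ and extracts linear growth from the slacks.
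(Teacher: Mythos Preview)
Your argument is correct and follows the same overall strategy as the paper's own proof in Section~\ref{sub.independent.strict}: use the relation $v=\mb A\alpha+\mb B\gamma$ to cancel the linear-in-$k$ contribution $\tfrac12 v\cdot k$ against the linear part of the degree bound, and then use strict positivity of the angles together with the rank condition on $(\mb A|\mb B)$ to extract linear growth in $\|k\|$.

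The packaging differs in two places. The paper proves a dedicated estimate (Lemma~\ref{lem.Iconv}) of the form $\delta(m,e)-\beta m+\alpha e\ge\gamma\,L_+(m,e)$ by a cone-by-cone verification, whereas you obtain the analogous statement abstractly: you keep only the linear piece $\tfrac12\max\{0,m,-e\}$ of $\delta$, observe that the slack $\sigma_i(m,e)=\max\{0,m,-e\}-\gamma_i m+\alpha_i e$ is a nonnegative, positively-homogeneous, piecewise-linear function vanishing only at the origin (this is exactly where strict positivity of \emph{all three} of $\alpha_i,\beta_i,\gamma_i$ enters), and invoke compactness of the unit circle. Likewise, where the paper appeals to the polytope criterion of Lemma~\ref{lem.Pv} for the final finiteness step, you use directly that $k\mapsto(\mb A^Tk,\mb B^Tk)$ is injective (from $\rk(\mb A|\mb B)=r$) and equivalence of norms. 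Your version is slightly more conceptual and avoids the explicit constants and case analysis; the paper's version has the advantage of producing an explicit lower bound $\gamma=\min\{\alpha,\beta,\tfrac12-\alpha-\beta\}$ for each factor.
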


The proof of proposition \ref{prop.IMconv.strict} requires some lemmas.

\begin{lemma}
\label{lem.Iconv}
Fix positive real numbers $\a,\b >0$ with $\a+\b < 1/2$ and let 
$\ga = \min\{\a,\b,1/2-\a-\b\}$. Then for all for all integers
$m,e$ we have:
$$
\d(\ID(m,e) q^{-\b m + \a e}) \geq \ga \max\{|m|,|e|,|m+e|\} 
%\geq \ga' \left(|m|+|e|\right) 
\,.
$$
\end{lemma}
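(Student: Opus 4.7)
The plan is to reduce the claim, via Lemma \ref{lem.degID}, to a sign-based case analysis. Writing $\d(m,e) := \d(\ID(m,e))$ and setting $c := 1/2 - \a - \b$, so that $\a + \b + c = 1/2$ and $\g = \min(\a, \b, c)$, the desired bound becomes
\[
\d(m,e) - \b m + \a e \;\ge\; \g\,\max\{|m|,|e|,|m+e|\}.
\]

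I will partition $\BZ^2$ into the six closed sectors determined by the signs of $m$, $e$, and $m+e$ (equivalently, the six chambers obtained by subdividing the three cones of the fan $F$ from Section \ref{sub.degDI}). In each sector Lemma \ref{lem.degID} reduces $\d(m,e)$ to one of the three expressions
\[
\tfrac{1}{2}m(m+e+1), \qquad \tfrac{1}{2}(-e)(-m-e+1), \qquad \tfrac{1}{2}(-m)e,
\]
according as $(m,e)$ lies in $\{m\ge 0,\, m+e\ge 0\}$, in $\{e\le 0,\, m+e\le 0\}$, or in $\{m\le 0,\, e\ge 0\}$. The quantity $M := \max\{|m|,|e|,|m+e|\}$ likewise simplifies in each sector to a single monomial in $\pm m$, $\pm e$, $\pm(m+e)$ (for example, $M = m+e$ in the first quadrant, $M = m$ in $\{m\ge 0,\, e\le 0,\, m+e\ge 0\}$, $M = -e$ in $\{m\ge 0,\, e\le 0,\, m+e\le 0\}$, etc.).

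Next, in each sector I discard the nonnegative quadratic contribution to $\d(m,e)$; the inequality then collapses to a linear inequality in $m$ and $e$, each of which is an immediate consequence of one of $\a \ge \g$, $\b \ge \g$, or $c \ge \g$ (often used in the combined form $1/2 - \b = \a + c \ge \g$ or $1/2 - \a = \b + c \ge \g$), together with the sign conditions of the sector. For instance, in the first quadrant one needs $(1/2 - \b) m + \a e \ge \g(m+e)$, which follows at once from $1/2 - \b \ge \g$ and $\a \ge \g$; in the sector $\{m \ge 0,\, e \le 0,\, m+e \le 0\}$ one needs $(-e)(1/2 - \a) - \b m \ge \g(-e)$, which follows from $1/2 - \a - \g \ge \b$ combined with the sector-defining inequality $-e \ge m$; and in the two sectors with $m \le 0,\, e \ge 0$ the quadratic part of $\d$ vanishes from the inequality and one uses only $\b(-m) \ge \g(-m)$ or $\a e \ge \g e$.

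The main obstacle is the routine bookkeeping of six sectors; there is no conceptual step beyond invoking the closed form of Lemma \ref{lem.degID} and peeling off one of the three lower bounds on $\g$ in each case.
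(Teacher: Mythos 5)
Your proof is correct and follows essentially the same route as the paper: both reduce to the six closed sectors on which $\d(\ID(m,e))$ and $\max\{|m|,|e|,|m+e|\}$ are each given by a single closed-form expression, discard the nonnegative quadratic part of $\d$, and verify the resulting linear inequality from $\a,\b,1/2-\a-\b\ge\ga$. The only cosmetic difference is that the paper first checks the three boundary rays separately (which also shows the choice of $\ga$ is optimal), whereas you absorb them into the closed sectors.
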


\begin{proof}(of Lemma \ref{lem.Iconv})
Let $L_+(m,e)=\max\{|m|,|e|,|m+e|\}$. $L_+(m,e)$ is a piece-wise linear
function given by Figure \ref{fig.Lplus}.

\begin{figure}[htpb]
\begin{center}
\includegraphics[height=0.12\textheight]{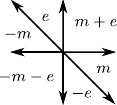}
\caption{A piece-wise linear function $L_+$.}
\label{fig.Lplus}
\end{center}
\end{figure}

With the notation of Lemma \ref{lem.degID}, we need to show that
\be
\label{eq.degDL}
\d(m,e)-\b m + \a e \geq \ga L_+(m,e) \,.
\ee
First, consider the three rays of $\d(m,e)$:

\begin{center}
\begin{tabular}{|c|c|c|} \hline
ray & left hand side of \eqref{eq.degDL} &
right hand side of \eqref{eq.degDL}  \\ \hline
$m=0, e \geq 0$ & $\a e$ & $\ga e$ \\ \hline
$e=0, m \leq 0$ & $ -\b m$ & $-\ga m$   \\ \hline
$m=-e \geq 0$ &  $m(1/2-\a-\b)$ & $\ga m$ \\ \hline
\end{tabular}
\end{center}

This proves inequality \eqref{eq.degDL} in the three rays and shows that
the choice of $\ga$ is optimal. Now, in the interior of each of the $6$ cones
of linearity of $L_+$, $\d(m,e) -\b m + \a e$ is given by a quadratic polynomial
of $m,e$. The degree $2$ (resp. $1$) part of this polynomial is always 
greater than or equal to $1/2 L_+(m,e)$ (resp. $(1/2-\ga) L_+(m,e)$)
by a case computation. For example, in the cone 
$m \geq 0, e \leq 0, e+m \geq 0$ with rays $\BR_+(1,0)$ and $\BR_+(1,-1)$
we have $\d(m,e)=m(m+e)/2+m/2$ and $L_+(m,e)=m$ and
\begin{eqnarray*}
\d(m,e)-\b m + \a e &=& \frac{m(e+m)}{2}+\frac{m}{2}-\b m+\a e
\\ & \geq & \frac{m}{2} + \frac{m}{2} -\b m+\a e
\\ &= & (1-\b-\a) m + \a(m+e) 
\\ & \geq & (1-\b-\a) m \geq (1-\b-\a) L_+(m,e) 
\end{eqnarray*}
The other cases are similar. 
\end{proof}

The next lemma is well-known \cite{Ziegler}.

\begin{lemma}
\label{lem.Pv}
Consider the convex polytope $P$ in $\BR^r$ defined by
$$
P=\{ x \in \BR^r \, | v_i \cdot x \leq c_i \quad i=1,\dots s \} \,
$$
where $v_i \in \BR^r$ and $c_i \in \BR$ for $i=1,\dots,s$. Then $P$
is compact if and only if the linear span of the set $\{v_i| i=1,\dots,s\}$
is $\BR^r$ and $0$ is a $\BR_{\geq 0}$-linear combination of elements of
$\{v_i| i=1,\dots,s\}$.
\end{lemma}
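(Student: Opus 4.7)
The plan is to reduce compactness of $P$ to a condition on its recession cone, and then apply the already-established Farkas-type Lemma \ref{lem.farkas}. Since $P$ is a finite intersection of closed half-spaces it is closed, so compactness is equivalent to boundedness.

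First I would show that, for nonempty $P$, boundedness of $P$ is equivalent to triviality of its recession cone
\[
R \;=\; \{x \in \BR^r \,|\, v_i \cdot x \leq 0, \ i=1,\dots,s\}.
\]
If $0 \neq x \in R$ and $y \in P$, then for every $t \geq 0$ one has $v_i \cdot (y+tx) \leq v_i \cdot y \leq c_i$, so $y+tx \in P$ and $P$ is unbounded. Conversely, if $y_n \in P$ with $|y_n| \to \infty$, then a subsequence of $y_n/|y_n|$ converges to some unit vector $x$, and dividing $v_i \cdot y_n \leq c_i$ by $|y_n|$ and passing to the limit places $x$ in $R \setminus \{0\}$. (The edge case $P = \emptyset$ is trivially compact and will be treated as a boundary case or absorbed by the sign convention in the hypotheses.)

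Next I would invoke Lemma \ref{lem.farkas} with $\calA = \{-v_1,\dots,-v_s\}$ and $\calB = \emptyset$. In this setup, clause (a) of that lemma reads \emph{``no $v \neq 0$ satisfies $v_i \cdot v \leq 0$ for all $i$''}, which is exactly the statement $R = \{0\}$. Clause (b) translates, after absorbing the sign flip, to \emph{``the $v_i$ linearly span $\BR^r$, and there exist $\alpha_i > 0$ with $\sum_i \alpha_i v_i = 0$''}, which is precisely the right-hand side of the lemma being proved -- with the convention that the $\BR_{\geq 0}$-combination in the statement is understood to be a nontrivial one. (The equivalence between a strictly positive combination and a nontrivial nonnegative combination, under the spanning hypothesis, is an easy perturbation argument.)

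I do not anticipate any serious obstacle. Both ingredients -- the recession-cone characterization of boundedness and the Farkas alternative -- are standard, and the second is already recorded in the paper as Lemma \ref{lem.farkas}. The only bookkeeping is the sign flip $v_i \mapsto -v_i$ needed to match the hypothesis of Farkas and a one-line remark clarifying the intended meaning of ``$\BR_{\geq 0}$-linear combination'' in the statement.
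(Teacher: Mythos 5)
The paper offers no proof of this lemma at all --- it is simply cited as well-known from \cite{Ziegler} --- so there is no argument of the author's to compare against. Your route is the natural one, and its two main steps are sound: compactness of a nonempty polyhedron is indeed equivalent to triviality of the recession cone $R=\{x : v_i\cdot x\le 0 \ \forall i\}$, and specializing Lemma \ref{lem.farkas} to $\calA=\{-v_1,\dots,-v_s\}$, $\calB=\emptyset$ correctly turns $R=\{0\}$ into: the $v_i$ span $\BR^r$ linearly and $0=\sum_i\a_i v_i$ for some coefficients $\a_i$ that are \emph{all strictly positive}.

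The genuine error is your final parenthetical claim that, under the spanning hypothesis, a strictly positive relation is equivalent to a nontrivial nonnegative relation ``by an easy perturbation argument.'' No such argument exists, because the equivalence is false. Take $r=2$, $s=3$, $v_1=(1,0)$, $v_2=(-1,0)$, $v_3=(0,1)$: these span $\BR^2$ and satisfy $v_1+v_2=0$, a nontrivial $\BR_{\geq 0}$-relation, yet any relation $\a_1v_1+\a_2v_2+\a_3v_3=0$ forces $\a_3=0$, so there is no strictly positive relation --- and indeed $P=\{x_1\le 0,\ -x_1\le 0,\ x_2\le 0\}$ is an unbounded ray. So the ``not all zero'' reading of the $\BR_{\geq 0}$-condition actually makes the lemma false as stated; the correct reading is the all-coefficients-positive one, which is exactly what your Farkas step already delivers and which matches the strict positivity appearing in clause (b) of Lemma \ref{lem.farkas}. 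You should prove and record that version directly rather than attempt to bridge to the weaker nonnegative formulation. The $P=\emptyset$ caveat you flag is also real (an empty $P$ is compact while the right-hand condition can fail), but it is harmless for the paper's application, which only uses the implication from the positive-spanning condition to boundedness.
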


\begin{proof}(of Proposition \ref{prop.IMconv.strict})
Let $a_i$ and $b_i$ for $i=1,\dots,s$ denote the columns of $(\mb A|\mb B)$.
If $\mb M$ admits a strict index structure then there exist $\a_i, \ga_i >0$
that satisfy $a_i + \ga_i < 1$ for all $i$ such that
$$
\sum_{i=1}^s \a_i a_i + \ga_i \b_i = \nu \,.
$$
It follows that
$$
I_{\mb M}(m,e)(q)=\sum_{k \in \BZ^r}
\prod_{i=1}^s I(m_i-b_i \cdot k, e_i+a_i \cdot k)(q) 
q^{\frac{\b_i}{2} b_i \cdot k + \frac{\a_i}{2} a_i \cdot k} \,. 
$$
Applying Lemma \ref{lem.Iconv}, it follows that for every $k \in \BZ^d$,
the degree of the summand is bounded below by
$$
\sum_{i=1}^s \left( \b_i m_i - \a_i e_i \right)
+ \ga' \sum_{i=1}^s \left( |-m_i+b_i \cdot k| +|e_i+a_i \cdot k| \right)  
%+|-m_i+b_i \cdot k+ e_i+a_i \cdot k| 
\,.
$$
Now, Lemma \ref{lem.Pv} and admissibility implies that for fixed $N_0$,
there are finitely many $k \in \BZ^d$ such that the above degree is
less than $N_0$. Proposition \ref{prop.IMconv.strict} follows. 
\end{proof}

%%%%%%%%%%%%%%%%%%%%%%%%%%%%%%%%%%%%%%%%%%%%%%%%%%%%%%%%%%%%%%%%%%%%%%%%%%%%
%%%%%%%%%%%%%%%%%%%%%%%%%%%%%%%%%%%%%%%%%%%%%%%%%%%%%%%%%%%%%%%%%%%%%%%%%%%%

\section{Invariance of the 3D index under $2\leftrightarrow
3$ moves and $2\leftrightarrow 0$ moves}
\label{sec.23}

\subsection{Invariance under the $3\to 2$ move}
\label{sub.3-2}

Consider two ideal triangulations $\calT$ and $\wt{\calT}$ with $N$ and $N+1$
tetrahedra, respectively, related by a $2-3$ move shown in Figure 
\ref{fig.23too}.

\begin{figure}[htpb]
\begin{center}
\includegraphics[height=0.20\textheight]{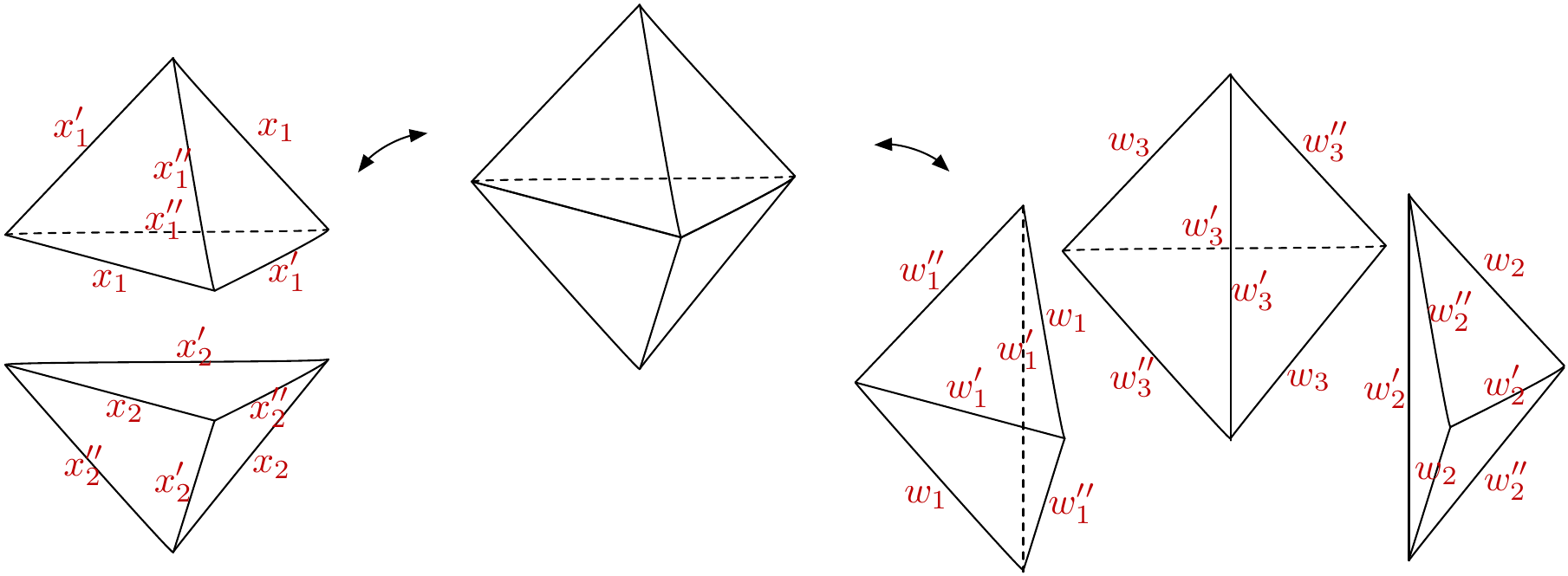}
\caption{A $2-3$ move.}
\label{fig.23too}
\end{center}
\end{figure}

The above figure matches the conventions of \cite[Sec.3.6]{DG}.
For a variable, matrix or vector $f$ associated to $\calT$, we will denote
by $\wt{f}$ the corresponding variable, matrix or vector associated to 
$\wt{\calT}$. Let us use variables $(Z,Z',Z'')$ 
and $(\wt Z,\wt Z',\wt Z'')$ to denote the angles of
$\calT$ and and $\wt{\calT}$ respectively, where
\be 
Z := (X_1,X_2,Z_3,...,Z_N)\,,\qquad 
\tilde Z := (W_1,W_2,W_3,Z_3,...,Z_N)\,.
\ee
We fix a quad type assigning these variables to $\calT$ and $\wt{\calT}$
as in Figure \ref{fig.23}. When calculating the Neumann-Zagier matrices,
we will assume that we keep the edge equation which comes from the internal
edge of the 2-3 bipyramid.

There are nine linear relations among the shapes of the tetrahedra involved 
in the move; three come from adding dihedral angles on the equatorial edges 
of the bipyramid:
\be 
\label{eq.s3}
W_1'=X_1+X_2\,,\qquad W_2'=X_1'+X_2''\,,\qquad 
W_3'= X_1''+X_2'\,, 
\ee
and six from the longitudinal edges:
\be 
\label{eq.s6}
\begin{array}{c@{\qquad}c@{\qquad}c}
X_1=W_2+W_3''\,,& X_1'=W_3+W_1''\,,&  X_1''=W_1+W_2''\,, \\[.2cm]
X_2 = W_2''+W_3\,,& X_2'=W_1''+W_2\,,& X_2''=W_3''+W_1\,.
\end{array}
\ee
Moreover, due to the central edge of the bipyramid, there is an extra 
gluing constraint in~$\wt \calT$:
\be
\label{eq.glue3} 
W_1'+W_2'+W_3' = 2\pi i\,. 
\ee
Let $\GA(\calT)$ and $\mathrm{A}(\calT)$ %and $\IS(\calT)$ 
denote, respectively, the sets of generalized and strict angle structures 
of $\calT$.

\begin{lemma}
\label{lem.mu.2.3}
Consider the map:
\be
\label{eq.3-2.angles}
\mu_{3\to 2} \colon \GA(\wt{\calT}) \to \GA(\calT),
\qquad
\mu_{3\to 2} (\wt Z,\wt Z',\wt Z'') = (Z,Z',Z'') 
\ee
defined by Equations \eqref{eq.s6}. It induces a map
$$
\mu_{3\to 2} \colon \mathrm{A}(\wt{\calT}) \to \mathrm{A}(\calT)
$$
\end{lemma}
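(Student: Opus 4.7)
The plan is to verify two things: first, that the substitution \eqref{eq.s6} sends any generalized angle structure on $\wt\CT$ to a generalized angle structure on $\CT$; and second, that this substitution preserves positivity, so strict structures go to strict structures.

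For the first step, I need to check the two defining conditions of a generalized angle structure on $\CT$: the angle sum $X_i+X_i'+X_i''=1$ at each of the two new tetrahedra $\Delta_1,\Delta_2$, and the edge equation summing to $2$ at every edge of $\CT$. The angle-sum condition for $\Delta_1$ follows from a direct computation using \eqref{eq.s6}:
\begin{align*}
X_1+X_1'+X_1'' &= (W_2+W_3'')+(W_3+W_1'')+(W_1+W_2'') \\
&=\sum_{i=1}^3(W_i+W_i'')=\sum_{i=1}^3(1-W_i')=3-2=1,
\end{align*}
where the last equality uses the edge equation \eqref{eq.glue3} at the central edge of $\wt\CT$ (written in the normalization where a vertex sum is $1$). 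The computation for $\Delta_2$ is identical up to relabeling.

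For the edge equations on $\CT$, I observe that the edges of $\CT$ split into two classes: those disjoint from the bipyramid, and the equatorial and longitudinal edges of the bipyramid. Edges disjoint from the bipyramid receive identical contributions from unchanged tetrahedra, so their edge sums still equal $2$. For each equatorial edge of $\CT$, the contribution from the bipyramid was a single $W_j'$ in $\wt\CT$ and becomes $X_1+X_2$ (or a similar sum) in $\CT$; the relations \eqref{eq.s3} show these are equal, so the edge equation is unchanged. For each longitudinal edge, the contribution in $\wt\CT$ is a sum of two $W$--angles that, by \eqref{eq.s6}, equals the corresponding single $X_j^{(\prime,\prime\prime)}$ in $\CT$. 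Hence every edge equation on $\CT$ is implied by the corresponding edge equation on $\wt\CT$ together with \eqref{eq.s3}--\eqref{eq.s6}.

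For the second step, strictness is immediate: each of the six expressions in \eqref{eq.s6} writes an $X$--angle as a sum of two $W$--angles. If all $W$--angles are strictly positive, then all $X$--angles are strictly positive, and since $X_i+X_i'+X_i''=1$ by step one, each is automatically less than $1$. Therefore $\mu_{3\to 2}(\mathrm{A}(\wt\CT))\subset \mathrm{A}(\CT)$. The only mild obstacle is bookkeeping to match the combinatorial picture of the bipyramid with the labelings in \eqref{eq.s3} and \eqref{eq.s6}, but once the figure is fixed as in \cite{DG} every identity is a one-line verification.
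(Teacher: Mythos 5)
Your proof is correct and follows essentially the same route as the paper: verify $X_i+X_i'+X_i''=1$ using the central-edge equation \eqref{eq.glue3} together with $W_i+W_i'+W_i''=1$, and observe that positivity is manifest since each $X$-angle in \eqref{eq.s6} is a sum of two $W$-angles. The only difference is that you also spell out the verification of the edge equations of $\CT$ (via \eqref{eq.s3} for equatorial and \eqref{eq.s6} for longitudinal edges), which the paper leaves implicit.
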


\begin{proof}
To check that $\mu_{3 \to 2}$ is well-defined
we need to show that $X_i+X_i'+X_i''=1$ is satisfied for $i=1,2$, assuming that 
Equation \eqref{eq.glue3} holds and 
$W_i+W_i'+W_i''=1$ for $i=1,2,3$. This is easy to check. If 
$(\wt Z,\wt Z',\wt Z'') \in \BR_+^{3(N+1)}$ (where $\BR_+$ is the set of 
positive real numbers), it is manifest from the definition that 
$(Z,Z',Z'') \in \BR_+^{3N}$. In other words, $\mu_{3\to 2}$ sends 
strict angle structures on $\wt{\calT}$ to those on $\calT$.
\end{proof}
This proves the first part of Proposition \ref{prop.3.2.index}.
To prove the remaining part, we study how are the gluing equation matrices
of $\calT$ and $\wt{\calT}$ related. Let $(\bar{\mb A}|\bar{\mb B}|\bar{\mb  C})$ 
denote the matrix of exponents of the gluing equations of $\calT$. We will 
use column notation and write
$$
\bar{\mb A} = (\bar a_1,\;\bar a_2,\;\bar a_i)\,,\quad 
\bar{\mb B} = (\bar b_1,\;\bar b_2,\;\bar b_i)\,,\quad
\bar{\mb C} = (\bar c_1,\;\bar c_2,\;\bar c_i)\,,
$$
where $\bar a_i$ meaning $(\bar a_3,\bar a_4,...,\bar a_N)$ and similarly for 
$\bar b_i$ and $\bar c_i$. Eliminating the $Z'$ variables, we obtain
$$
\mb A = \bar{\mb A} - \bar{\mb B}, \qquad
\mb B = \bar{\mb C} - \bar{\mb B} \,.
$$
In other words, 
\be
\label{eq.a1b1}
(a_1, a_2, a_i)=(\bar a_1 - \bar b_1, \bar a_2 - \bar b_2, \bar a_i - \bar b_i)
\quad
(b_1, b_2, b_i)=(\bar c_1 - \bar b_1, \bar c_2 - \bar b_2, \bar c_i - \bar b_i)
\,.
\ee
To compute the corresponding matrices of $\wt{\calT}$, use

\begin{align*}
2 &= \bar a_1 X_1 + \bar a_2 X_2 + \bar a_i Z_i 
+ \bar b_1 X'_1 + \bar b_2 X'_2 + \bar b_i Z'_i +
\bar c_1 X''_1 + \bar c_2 X''_2 + \bar c_i Z''_i \\
&= \bar a_1 (W_2+W_3'') + \bar a_2 ( W_2''+W_3) + \bar a_i Z_i \\
&\quad + \bar b_1 (W_3+W_1'') + \bar b_2 (W_1''+W_2) + \bar b_i Z'_i  \\
&\quad + \bar c_1(W_1+W_2'') + \bar c_2 (W_3''+W_1) + \bar c_i Z''_i \,.
\end{align*}
Collecting the coefficients of $\tilde Z, \tilde Z', \tilde Z''$ it
follows that the matrix of exponents of the gluing equations of $\wt{\calT}$
is given by
\begin{align*}
\wt {\bar{\mb A}} 
& = \begin{pmatrix} 0 & 0 & 0  & 0 \\ 
\bar c_1+\bar c_2 & \bar a_1 + \bar b_2 & \bar a_2 + \bar b_1 & \bar a_i 
\end{pmatrix}\,, \qquad
\wt {\bar{\mb B}}
=\begin{pmatrix} 1 & 1 & 1 & 0 \\ 
0 & 0 & 0 & \bar b_i 
\end{pmatrix}\,, 
\\
\wt {\bar{\mb C}}
&=\begin{pmatrix} 0 & 0 & 0 & 0 \\ 
\bar b_1+\bar b_2 & \bar a_2+\bar c_1 & \bar a_1+\bar c_2 & \bar c_i 
\end{pmatrix} \,.
\end{align*}
Use a row operation via
$P= \begin{pmatrix} 1 & 0 \\ \bar b_1 + \bar b_2 & \mb I \end{pmatrix}$
it follows that
\begin{align*}
P \wt {\bar{\mb A}} 
& = \begin{pmatrix} 0 & 0 & 0  & 0 \\ 
\bar c_1+\bar c_2 & \bar a_1 + \bar b_2 & \bar a_2 + \bar b_1 & \bar a_i 
\end{pmatrix}\,, \qquad
P \wt {\bar{\mb B}}
=\begin{pmatrix} 1 & 1 & 1 & 0 \\ 
\bar b_1+\bar b_2 & \bar b_1+\bar b_2 & \bar b_1+\bar b_2 & \bar b_i 
\end{pmatrix}\,, 
\\
P \wt {\bar{\mb C}}
&=\begin{pmatrix} 0 & 0 & 0 & 0 \\ 
\bar b_1+\bar b_2 & \bar a_2+\bar c_1 & \bar a_1+\bar c_2 & \bar c_i 
\end{pmatrix} \,.
\end{align*}
Since $\wt{\mb A} =\wt{ \bar{\mb A}} - \wt{\bar{\mb B}}$ and 
$\wt{\mb B} =\wt{ \bar{\mb C}} -\wt{ \bar{\mb B}}$, the above combined
with Equation \eqref{eq.a1b1} implies that

\be 
\label{eq.wtAB}
P \wt{\mb A} = \begin{pmatrix} -1 & -1 & -1  & 0 \\ 
b_1+b_2 & a_1 & a_2 & a_i \end{pmatrix}\,,\qquad P \wt{\mb B} 
=\begin{pmatrix} -1 & -1 & -1 & 0 \\ 0 & a_2+b_1 & a_1+b_2 & b_i 
\end{pmatrix}\,,
\ee
Since the 3D index is invariant under row operations (see Remark 
\ref{rem.GLr}), Equation \eqref{eq.wtAB} and the pentagon identity
\eqref{eq.pentagon} concludes that $I_{\wt{\calT}}=I_{\calT}$.
\qed

\subsection{Invariance under the $2\to 3$ move}
\label{sub.2-3}

In this section we will define what is a special angle structure on $\calT$,
and show the partial invariance of the 3D index under a $2\to 3$ move.
We will use the same notation as in Section \ref{sub.3-2}.
To define a map $(Z,Z',Z'') \mapsto (\wt Z,\wt Z',\wt Z'')$, we need to
solve for $W_i,W_i',W_i''$ for $i=1,2,3$ in terms of $X_i,X_i',X_i''$
for $i=1,2$ using Equations \eqref{eq.s3} and \eqref{eq.s6}. The answer
involves one free variable (say, $W_1$) and it is given by
%%%%%%% see Mathematica notebook: 
%%%%%%% dimofte/3Dindex/23move.AngleStructure.nb
\begin{subequations}
\label{eq.W1free}
\begin{align}
(W_1,W_2,W_3) &=
(W_1,W_1 + X_1 + X_2 + X_2''-1, W_1  + X_1 + X_2 + X_1' -1)
\\
(W_1',W_2',W_3')&=
(X_1 + X_2, X_1'-X_2-X_2''+1,-X_1-X_1'+X_2'+1)
\\
(W_1'',W_2'',W_3'')&= 
(-W_1 + 1 -X_1 -X_2, -W_1 -X_1 -X_1'+1, -W_1-X_2-X_2'+1) 
\end{align}
\end{subequations}
If $(Z,Z',Z'')$ is a strict angle structure on $\calT$, then 
$(\wt Z,\wt Z',\wt Z'')$ is a strict angle structure if and only if
Equations \eqref{eq.W1free} have a strictly positive solution. It is
easy to see that this is equivalent to the following condition
\be
\label{eq.special.angle}
X_1 + X_2 < 1, \qquad X_1''+X_2' < 1, \qquad X_1'+ X_2'' < 1 \,.
\ee
These conditions are precisely equivalent to the $W_1', W_2', W_3' <1$, 
as follows by Equation \eqref{eq.s3}. I.e., a special  strict angle structure
is an angle structure such that all angles of the bipyramid are less than
$1$.

\begin{definition}
\label{def.special.index}
We will say that $(Z,Z',Z'')$ is a {\em special strict angle structure}
on $\calT$ if the inequality \eqref{eq.special.angle} is satisfied.
\end{definition}
Let $\mathrm{A}^{\mathrm{sp}}(\calT)$ denote the set of special strict
angle structures on $\calT$. Then, we have a map (more precisely, a section of 
$\mu_{3 \to 2}$)
$$
\mu_{2\to 3} \colon \mathrm{A}^{\mathrm{sp}}(\calT) \to A(\wt{\calT}),
\qquad
\mu_{2\to 3}(Z,Z',Z'')=(\wt Z,\wt Z',\wt Z'') \,.
$$
The conclusion is that if $\calT$ admits a special strict angle structure,
then so does $\wt{\calT}$. In that case, $I_{\calT}$ and $I_{\wt{\calT}}$ both
exist. An application of the pentagon identity as in Section \ref{sub.3-2}
implies that $I_{\calT}=I_{\wt{\calT}}$.
\qed

\subsection{An ideal triangulation of {\tt m136}}
\label{sub.m136}

Let $\calT$ denote the ideal triangulation \cite[Ex.7.7]{HRS} of the
1-cusped census manifold {\tt m136} using 7 tetrahedra. Its gluing equation
matrices around the edges are given by:
%%% see Mathematica file: dimofte/3DIndex/m136.NZMatrices.nb
{\tiny
\begin{align*}
\bar{\mb A} &=\left(
\begin{array}{ccccccc}
 1 & 0 & 0 & 0 & 1 & 1 & 1 \\
 0 & 0 & 0 & 1 & 0 & 1 & 0 \\
 0 & 0 & 1 & 0 & 0 & 0 & 0 \\
 0 & 1 & 0 & 0 & 0 & 0 & 0 \\
 1 & 0 & 0 & 0 & 0 & 0 & 0 \\
 0 & 1 & 1 & 1 & 0 & 0 & 1 \\
 0 & 0 & 0 & 0 & 1 & 0 & 0 \\
 %0 & 0 & 0 & 0 & -1 & 0 & 0 \\
 %0 & 0 & 0 & 0 & -1 & 0 & -1 \\
\end{array}
\right), &
\bar{\mb B} &=
\left(
\begin{array}{ccccccc}
 0 & 0 & 0 & 1 & 0 & 0 & 1 \\
 1 & 0 & 0 & 0 & 1 & 0 & 0 \\
 0 & 1 & 1 & 1 & 1 & 1 & 0 \\
 1 & 0 & 0 & 0 & 0 & 0 & 1 \\
 0 & 1 & 0 & 0 & 0 & 0 & 0 \\
 0 & 0 & 0 & 0 & 0 & 0 & 0 \\
 0 & 0 & 1 & 0 & 0 & 1 & 0 \\
 %1 & 0 & 0 & 0 & 0 & 0 & 0 \\
 %0 & 0 & 0 & 0 & 1 & 0 & 0 \\
\end{array}
\right), &
\bar{\mb C}&=
\left(
\begin{array}{ccccccc}
 1 & 0 & 0 & 1 & 0 & 1 & 1 \\
 0 & 0 & 0 & 0 & 1 & 1 & 0 \\
 1 & 2 & 0 & 1 & 0 & 0 & 0 \\
 0 & 0 & 1 & 0 & 0 & 0 & 0 \\
 0 & 0 & 1 & 0 & 1 & 0 & 0 \\
 0 & 0 & 0 & 0 & 0 & 0 & 0 \\
 0 & 0 & 0 & 0 & 0 & 0 & 1 \\
 %0 & 0 & 0 & 0 & 0 & 0 & 0 \\
 %-1 & 1 & 0 & -2 & 0 & 0 & 1 \\
\end{array}
\right)
\end{align*}
}
%Ignoring the last two rows (which correspond to the meridian and
%longitude cusp equations) let $\mb A$, $\mb B$ and $\mb C$ denote the first 
%$7$ rows of the matrices $\bar{\mb A}$, $\bar{\mb B}$ and $\bar{\mb C}$. 
%and the third to last row (which is redundant)
A generalized angle structure is a solution to Equations \eqref{eq.angle}.
%\be
%\label{eq.GAm136}
%\mb A \a + \mb B \b + \mb C \ga = (2,\dots,2)^T, 
%\qquad \a + \b + \ga = (1,\dots,1)^T
%\ee
In our example, the set of generalized angle structures $\GA(\calT)$ 
is an affine $8$-dimensional subspace of $\BR^{21}$ and
the intersection $\SA(\calT)=\GA(\calT) \cap [0,\infty)^{21}$ is the polytope
of semi-angle structures. {\tt Regina} \cite{Bu}
gives that $\SA(\calT)$ is the convex hull of the following set of
$11$ points $(\a_1,\b_1,\ga_1,\dots,\a_7,\b_7,\ga_7)$ in $\BR^{21}$.

{\tiny
$$
\left(
\begin{array}{ccc|ccc|ccc|ccc|ccc|ccc|ccc}
0 & 0 & 1 & 1 & 0 & 0 & 0 & 0 & 1 & 1 & 0 & 0 & 0 & 0 & 1 & 0 & 1 & 0 & 0 &
0 & 1 \\
0 & 1 & 0 & 0 & 1 & 0 & 0 & 0 & 1 & 1 & 0 & 0 & 1 & 0 & 0 & 0 & 1 & 0 & 1 & 0 & 0 
\\ 
1 & 0 & 0 & 1 & 0 & 0 & 0 & 1 & 0 & 1 & 0 & 0 & 0 & 0 & 1 & 0 & 1 & 0 & 0 & 1 & 0 
\\ 
1 & 0 & 0 & 1 & 0 & 0 & 0 & 0 & 1 & 1 & 0 & 0 & 0 & 1 & 0 & 0 & 1 & 0 & 0 & 0 & 1 
\\ 
1/2 & 1/2 & 0 & 1 & 0 & 0 & 0 & 1/2 & 1/2 & 1/2 & 1/2 & 0 & 0 & 0 & 1 & 0 & 1 & 0 & 1/2 & 0 & 1/2
\\ 
1/2 & 1/2 & 0 & 1 & 0 & 0 & 0 & 1/2 & 1/2 & 1/2 & 0 & 1/2 & 0 & 0 & 1 & 0 & 1 & 0 & 1/2 & 0 & 1/2
\\ 
1/2 & 1/2 & 0 & 1 & 0 & 0 & 1/2 & 0 & 1/2 & 1/2 & 1/2 & 0 & 0 & 0 & 1 & 0 & 1 & 0 & 0 & 0 & 1 
\\ 
1/2 & 1/2 & 0 & 1 & 0 & 0 & 1/2 & 0 & 1/2 & 1/2 & 0 & 1/2 & 0 & 0 & 1 & 0 & 1 & 0 & 0 & 0 & 1 
\\ 
1/2 & 1/2 & 0 & 1/2 & 0 & 1/2 & 0 & 0 & 1 & 1 & 0 & 0 & 1/2 & 0 & 1/2 & 0 & 1 & 0 & 1/2 & 0 & 1/2
\\ 
1/2 & 1/2 & 0 & 1/2 & 1/2 & 0 & 1/2 & 0 & 1/2 & 1 & 0 & 0 & 1/2 & 0 & 1/2 & 0 & 1 & 0 & 0 & 1/2 & 1/2
\\ 2/3 & 1/3 & 0 & 2/3 & 0 & 1/3 & 1/3 & 0 & 2/3 & 1 & 0 & 0 & 1/3 & 0 & 2/3 & 0 & 1 & 0 & 0 & 1/3 & 2/3
\end{array}
\right)
$$
}
A computation shows that if $(\a,\b,\ga) \in \SA(\calT)$, then 
$(a_6,b_6,c_6)=(t,1,-t)$ for some $t \in \BR$ which explains why $\calT$ has 
no strict angle structure. 
On the other hand, \cite[Example 7.7]{HRS} mention $\calT$ has a solution
$$
(z_1,\dots,z_6)=\left(2 i,-1+2 i,\frac{3}{5}+\frac{1}{5}i,-1,
\frac{1}{5}+\frac{2}{5}i,2,\frac{1}{2}+\frac{1}{2}i \right)
$$
of the gluing equations which recovers the complete hyperbolic structure
on {\tt m136}.

%0 0 1 ; 1 0 0 ; 0 0 1 ; 1 0 0 ; 0 0 1 ; 0 1 0 ; 0 0 1
%0 1 0 ; 0 1 0 ; 0 0 1 ; 1 0 0 ; 1 0 0 ; 0 1 0 ; 1 0 0
%1 0 0 ; 1 0 0 ; 0 1 0 ; 1 0 0 ; 0 0 1 ; 0 1 0 ; 0 1 0
%1 0 0 ; 1 0 0 ; 0 0 1 ; 1 0 0 ; 0 1 0 ; 0 1 0 ; 0 0 1
%1/2 1/2 0 ; 1 0 0 ; 0 1/2 1/2 ; 1/2 1/2 0 ; 0 0 1 ; 0 1 0 ; 1/2 0 1/2
%1/2 1/2 0 ; 1 0 0 ; 0 1/2 1/2 ; 1/2 0 1/2 ; 0 0 1 ; 0 1 0 ; 1/2 0 1/2
%1/2 1/2 0 ; 1 0 0 ; 1/2 0 1/2 ; 1/2 1/2 0 ; 0 0 1 ; 0 1 0 ; 0 0 1
%1/2 1/2 0 ; 1 0 0 ; 1/2 0 1/2 ; 1/2 0 1/2 ; 0 0 1 ; 0 1 0 ; 0 0 1
%1/2 1/2 0 ; 1/2 0 1/2 ; 0 0 1 ; 1 0 0 ; 1/2 0 1/2 ; 0 1 0 ; 1/2 0 1/2
%1/2 1/2 0 ; 1/2 1/2 0 ; 1/2 0 1/2 ; 1 0 0 ; 1/2 0 1/2 ; 0 1 0 ; 0 1/2 1/2
%2/3 1/3 0 ; 2/3 0 1/3 ; 1/3 0 2/3 ; 1 0 0 ; 1/3 0 2/3 ; 0 1 0 ; 0 1/3 2/3
%
%Each row is a vertex of the SA polytope. In each row, there are 7
%triples, one for each tetrahedron, and each entry in each triple, when
%multiplied by pi, is the angle at the corresponding quad in that
%tetrahedron.

\subsection{An ideal triangulation of {\tt m064}}
\label{sub.m064}

There is an explicit triangulation of {\tt m064} that uses 7 ideal
tetrahedra, communicated to us by Henry Segerman. Its gluing equation matrices
are given by
%%%%% see Mathematica file: dimofte/3DIndex/m064.NZMatrices.nb

{\tiny
\begin{align*}
\bar{\mb A} &=\left(
\begin{array}{ccccccc}
 2 & 0 & 1 & 0 & 0 & 0 & 0 \\
 0 & 0 & 0 & 0 & 0 & 0 & 0 \\
 0 & 0 & 0 & 1 & 0 & 0 & 0 \\
 0 & 2 & 1 & 1 & 1 & 1 & 0 \\
 0 & 0 & 0 & 0 & 0 & 0 & 0 \\
 0 & 0 & 0 & 0 & 0 & 0 & 1 \\
 0 & 0 & 0 & 0 & 1 & 1 & 1 \\
% 0 & 0 & 0 & 0 & -1 & 1 & 0 \\
% 0 & 0 & 1 & 0 & 0 & 0 & 0 \\
 \end{array}
\right), &
\bar{\mb B} &=
\left(
\begin{array}{ccccccc}
 0 & 2 & 0 & 0 & 0 & 0 & 1 \\
 2 & 0 & 0 & 1 & 0 & 0 & 0 \\
 0 & 0 & 1 & 0 & 0 & 0 & 1 \\
 0 & 0 & 0 & 0 & 1 & 1 & 0 \\
 0 & 0 & 0 & 0 & 1 & 1 & 0 \\
 0 & 0 & 1 & 0 & 0 & 0 & 0 \\
 0 & 0 & 0 & 1 & 0 & 0 & 0 \\
% 0 & 0 & 0 & 0 & 0 & 0 & 0 \\
% 0 & 0 & 0 & 0 & 1 & -1 & 1 \\
\end{array}
\right), &
\bar{\mb C}&=
\left(
\begin{array}{ccccccc}
 1 & 0 & 0 & 1 & 1 & 1 & 0 \\
 0 & 0 & 1 & 0 & 0 & 0 & 0 \\
 1 & 0 & 0 & 0 & 0 & 0 & 0 \\
 0 & 1 & 0 & 0 & 0 & 0 & 1 \\
 0 & 1 & 0 & 0 & 1 & 1 & 0 \\
 0 & 0 & 1 & 1 & 0 & 0 & 1 \\
 0 & 0 & 0 & 0 & 0 & 0 & 0 \\
% 0 & 0 & 0 & 0 & 0 & 0 & 0 \\
% 0 & 0 & 0 & -1 & 0 & 0 & 0 \\
\end{array}
\right)
\end{align*}
}
This triangulation has no semi-angle structure, and its gluing equations
has the following numerical shape solution 
{\small
$$
(1.60 + 0.34 i, 
0.74 + 0.40 i, 
0.86 - 0.33 i, 
1.68 + 0.39 i, 
0.51 + 0.54 i, 
0.51 + 0.54 i, 
-0.61 + 1.25 i)
$$
}
that gives rise to the discrete faithful representation
of {\tt m064}. An explicit computation shows that this triangulation admits
an index structure.

%%% exact values of shapes from "dehn": Natha Dundield's machine: 
%%% sage: attach Genus-Comp/nz_exact.py
%%% sage: M=snappy.Manifold("m064_7tet_no_semi_angle_structure.tri")
%%% sage: T = NeumannZagierDatum(M, 2)
%%% sage: K, z, shapes = T.exact_shapes(100, 10)
%%% sage: K, z, shapes = T.exact_shapes(1000, 10)
%%% sage: K
%%% Number Field in z with defining polynomial 
%%% x^8 - 4*x^7 + 5*x^6 - 3*x^5 + 3*x^3 - 2*x^2 + x + 1
%%% sage: z
%%% <ApproxAN: 0.0927390875582 + 0.972338470618*I>
%%% sage: shapes
%%% {3/11*z^7 - 16/11*z^6 + 29/11*z^5 - 2*z^4 + z^3 - 2/11*z^2 - 7/11*z 
%%% + 16/11,
%%% 7/13*z^7 - 27/13*z^6 + 33/13*z^5 - 20/13*z^4 - 1/13*z^3 + 19/13*z^2 
%%% - 15/13*z + 16/13, 
%%% -1/2*z^7 + 5/2*z^6 - 5*z^5 + 13/2*z^4 - 11/2*z^3 + 2*z^2 - z - 1/2, 
%%% z^6 - 2*z^5 - z^3 - 2*z^2 + z + 1, 
%%% 1/2*z^7 - 3/2*z^6 + z^5 - 1/2*z^4 - 1/2*z^3 + z^2 + 1/2, 
%%% 1/2*z^7 - 3/2*z^6 + z^5 - 1/2*z^4 - 1/2*z^3 + z^2 + 1/2, 
%%% -z^3 + 2*z^2 + 1}

\subsection{An ideal triangulation with no index structure}
\label{sub.noindex}

Consider an ideal triangulation $\calT$ which contains an edge $e$
and a tetrahedron $\Delta_1$ such that goes around $e$ five times with shapes
$Z$, $Z'$, $Z'$, $Z''$ and $Z''$. Suppose that no other tetrahedron 
touches $e$. Then the equation for a generalized angle structure around $e$ 
reads
$$
\a + 2 \b + 2 \ga =2, \qquad \a + \b + \ga =1
$$
This forces $\a=0$ so no generalized angle structure has $\a >0$. 
Note that the corresponding gluing equations around the edge $e$ reads
$$
z (z')^2 (z'')^2 = 1, \qquad z z' z'' =-1, \qquad z'=(1-z)^{-1} \,
$$
which forces $z=1$. Thus the gluing equations have no non-degenerate
solution, i.e., no solution with shapes in $\BC\setminus\{0,1\}$.

More complicated examples can be arranged using special configurations of
two or more edges and tetrahedra. In all examples that we could generate
with no index structure, the triangulation is degenerate. 

Of course, the arguments of a shape solution to the gluing equations 
is a generalized angle structure. The latter, however, need not be an
index structure if some of the shapes are real, or have negative
imaginary part; see for instance the triangulation of {\tt m064} 
in Section \ref{sub.m064}.

\subsection{Invariance under the $2 \leftrightarrow 0$ move}
\label{sub.20move}

The next lemma implies the invariance of the index of an ideal triangulation
under a $2\leftrightarrow 0$ move. Such a move is also known as a pillowcase
move, described in detail in \cite[Sec.6]{GHRS}. 

\begin{lemma}
\label{lem.20move}
For integers $m,e,c$ we have
$$
\sum_e \ID(m,e)\ID(m,e+c) q^e=\delta_{c,0} \,.
$$
\end{lemma}

\begin{proof}
Equations \eqref{eq.QDL} and \eqref{IDQDL} imply that
$$
\sum_e \ID(m,e)x^e = 
\frac{(q^{-\frac{m}{2}+1}x^{-1})_\infty}{
(q^{-\frac{m}{2}}x)_\infty} \,.
$$
Since
$$
\frac{(q^{-\frac{m}{2}+1}x^{-1})_\infty}{
(q^{-\frac{m}{2}}x)_\infty} \cdot
\frac{(q^{-\frac{m}{2}+1}(q x^{-1})^{-1})_\infty}{
(q^{-\frac{m}{2}}(q x^{-1}))_\infty} =1 \,,
$$ 
it follows that 
$$
\sum_{e,e'} \ID(m,e) x^e \ID(m,e') q^{e'} x^{-e'}=1 \,.
$$
Therefore,
$$
\sum_{e,e': e-e'=c} \ID(m,e) \ID(m,e') q^{e'} =\delta_{c,0} \,. 
$$
This implies that
$$
\sum_{e'} \ID(m,e'+c) \ID(m,e') q^{e'}=\delta_{c,0} \,.
$$
The result follows.
\end{proof}

\subsection{Acknowledgment}
The author wishes to thank Greg Blekherman, Nathan Dunfield,
Christoph Koutschan, Henry Segerman and Josephine Yu for enlightening 
conversations. The author wishes to especially thank Tudor Dimofte for 
explaining the 3D index, and for his generous sharing of his ideas. 

The work was initiated during a Clay Conference in Oxford, UK. The author 
wishes to thank the Clay Institute and Oxford University for their hospitality.

\appendix 

\section{$\ID$ satisfies the pentagon identity}
\label{sub.pentagon}

\centerline{\it by Sander Zwegers} \bigskip

There are several proofs of the key pentagon identity of the 
tetrahedron index $\ID$. The proofs may use
an integral representation of the quantum dilogarithm, or  
$q$-holonomic recursion relations, or algebraic identities of
generating series of $q$-series of Nahm type \cite{GL2}.

\subsection{A generating series proof of the pentagon identity}
\label{sub.genseries}

In this section we will prove that $\ID$ satisfies the pentagon
identity using generating series. We will abbreviate the Pochhammer symbol
$$
(x;q)_\infty =\prod_{n=0}^\infty (1-xq^n)
$$
by $(x)_\infty=(x;q)_\infty$. The proof 
\begin{itemize}
\item
starts from an associativity identity
$$
\frac{(z_1z_2)_\infty}{(z_1)_\infty (z_2)_\infty} \cdot
\frac{(x_1z_1^{-1}q)_\infty (x_2z_2^{-1} q)_\infty}{
(x_1 x_2 z_1^{-1}z_2^{-1}q)_\infty}=
\frac{(x_1z_1^{-1}q)_\infty}{(z_1)_\infty} \cdot
\frac{(x_2z_2^{-1}q)_\infty}{(z_2)_\infty} \cdot
\frac{(z_1z_2)_\infty}{(x_1 x_2 z_1^{-1}z_2^{-1}q)_\infty}
$$
that uses $4$ additional variables $\{x_1,x_2,z_1,z_2\}$ in addition to
the $4$ variables\newline $\{m_1,m_2,e_1,e_2\}$,
\item
extracts coefficients with respect to $(z_1,z_2)$ and 
\item
specializes $(x_1,x_2)=(q^{-m_1},q^{-m_2})$. This last part is
not algebraic and requires to show convergence. The latter follows from
Corollary \ref{cor.convP}.
\end{itemize}
Let us now give the details. Consider 
\be
\label{eq.Fex}
F_e(x)=\sum_n (-1)^n \frac{q^{\frac{1}{2}n(n+1)} x^n}{(q)_n(q)_{n+e}}
\in \BZ[[x,q]]\, .
\ee
Here and below, summation is over the set of integers, with the 
understanding that $1/(q)_n=0$ for $n < 0$. 

We will show that 
\be
\label{eq.Fgen}
q^{e_1 e_2} F_{e_1}(q^{e_2} x_1)F_{e_2}(q^{e_1} x_2)
=\sum_{e_3} (x_1 x_2 q)^{e_3} F_{e_1+e_3}(x_1) F_{e_2+e_3}(x_2) 
F_{e_3}(x_1 x_2)
\ee 
in the ring $\BZ((x_1,x_2,q))$. Since 
$$
F_e(q^{-m})=q^{\frac{e m}{2}}\ID(m,e) \,,
$$
the substitution $(x_1,x_2)=(q^{-m_1},q^{-m_2})$ (which converges by
Corollary \ref{cor.convP}) implies the pentagon identity of Equation
\ref{eq.pentagon}.

\begin{lemma}
\label{lem.5identities}
For $|q|<1$ we have:
\begin{eqnarray*}
\frac{1}{(x)_\infty} &=& \sum_n \frac{x^n}{(q)_n} \qquad\qquad |x|<1
\\
(xq)_\infty &=& \sum_n (-1)^n \frac{q^{\frac{1}{2}n(n+1)} x^n}{(q)_n}
\\
\frac{(xy)_\infty}{(x)_\infty} &=&  \sum_n \frac{(y)_n x^n}{(q)_n} 
\qquad\qquad |x|<1
\\
\frac{(xy)_\infty}{(x)_\infty (y)_\infty } &=&  \sum_{r,s} 
\frac{q^{r s} x^r y^s}{(q)_r(q)_s }\qquad\qquad |x|<1,\,|y|<1
\\
\frac{(xq)_\infty(yq)_\infty}{(xyq)_\infty } &=&  \sum_{r,s} 
(-1)^{r+s} \frac{q^{\frac{1}{2}(r-s)^2+\frac{1}{2}(r+s)} x^r y^s}{(q)_r(q)_s }
\qquad\qquad |xyq|<1
\end{eqnarray*}
\end{lemma}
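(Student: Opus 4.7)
The plan is to prove the five identities in sequence, using the first three as classical building blocks for the last two. Identities (1)--(3) are instances of the Euler--Heine circle of results and admit a uniform proof by the functional equation method. Setting $f(x)=\sum_{n\ge 0} x^n/(q)_n$, the Pochhammer recursion $(q)_n = (1-q^n)(q)_{n-1}$ together with an index shift in the defining sum yields $(1-x)f(x) = f(qx)$; iterating gives $f(x) = \prod_{k\ge 0}(1-xq^k)^{-1}$, which is (1). The same method applied to $g(x)=\sum_n (-1)^n q^{n(n+1)/2} x^n/(q)_n$ produces $g(x) = (1-xq)g(xq)$ and hence $g(x)=(xq)_\infty$, which is (2). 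Identity (3), Heine's $q$-binomial theorem, falls to the analogous functional equation $(1-x)h(x) = (1-xy)\, h(qx)$ for $h(x)=\sum_n (y)_n x^n/(q)_n$.

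With (1)--(3) established, identity (4) follows by a two-step expansion. Rewriting $(y)_r = (y)_\infty/(yq^r)_\infty$ in identity (3) gives
\[
\frac{(xy)_\infty}{(x)_\infty(y)_\infty}\;=\;\sum_{r\ge 0} \frac{x^r}{(q)_r}\cdot \frac{1}{(yq^r)_\infty},
\]
and identity (1) applied with parameter $yq^r$ turns the inner factor into $\sum_{s\ge 0} y^s q^{rs}/(q)_s$. Swapping the two sums, which is legal by absolute convergence under $|x|,|y|<1$, produces the double sum on the right of (4).

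Identity (5) is proved by the same playbook but requires one additional combinatorial reduction. Expanding $(xq)_\infty$ and $(yq)_\infty$ via (2) and $1/(xyq)_\infty$ via (1) yields a triple sum in $(r,s,n)$; reindexing $r'=r+n$, $s'=s+n$ and separating the $q$-exponent reduces the inner $n$-sum to the identity
\[
\sum_{n\ge 0}\binom{r'}{n}_{\!q}\binom{s'}{n}_{\!q} (q)_n\, q^{(n-r')(n-s')}\;=\;1,
\]
a $q$-Chu--Vandermonde-type relation that can be verified by induction on $r'+s'$ or by direct application of Heine's transformation. The quadratic exponent that remains, namely $r'(r'+1)/2 + s'(s'+1)/2 - r's'$, equals $(r'-s')^2/2 + (r'+s')/2$, matching the statement of (5).

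The hard part of the plan is the derivation of identity (5): one must both identify and prove the auxiliary $q$-binomial identity above, and carefully track the quadratic exponent through the reindexing. Everything else is routine $q$-series manipulation, and absolute convergence under the stated domain conditions $|x|<1$, $|y|<1$, $|xyq|<1$ justifies every interchange of summation that the argument requires.
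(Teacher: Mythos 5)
Your treatment of identities (1)--(4) matches the paper in substance: the paper simply cites Zagier for (1)--(3) (your functional-equation derivations are the standard proofs), and your derivation of (4) is the paper's chain of equalities read in the opposite direction, so nothing to flag there. For (5), however, you take a genuinely different route. The paper never expands the \emph{product} side into a triple sum; instead it starts from the double sum on the right, writes $q^{\frac12(r-s)^2+\frac12(r+s)}y^s=q^{\frac12 r^2+\frac12 r}\cdot q^{\frac12 s^2+\frac12 s}(q^{-r}y)^s$, performs the $s$-sum via identity (2) to get $(q^{1-r}y)_\infty=(yq)_\infty\cdot(-1)^r y^r q^{-\frac12 r(r-1)}(y^{-1})_r$, and then applies identity (3) with arguments $(xyq,\,y^{-1})$ -- a two-line computation entirely internal to (1)--(3). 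Your route instead expands $(xq)_\infty$, $(yq)_\infty$ and $1/(xyq)_\infty$, reindexes, and reduces the coefficient of $x^r y^s$ to
$$
\sum_{n}\frac{(q)_r(q)_s\,q^{(n-r)(n-s)}}{(q)_{r-n}(q)_{s-n}(q)_n}=1 ,
$$
which you state but do not prove. Your exponent bookkeeping is correct (I verified that the residual exponent is $(n-r)(n-s)$ and that $\frac12 r(r+1)+\frac12 s(s+1)-rs=\frac12(r-s)^2+\frac12(r+s)$), and the auxiliary identity is true -- it is exactly the first identity of Equation \eqref{eq.5qa} in the appendix (Zagier's Eqn.~(13)), already used there for the second proof of the pentagon. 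So your argument is sound, but it outsources the real content of (5) to a $q$-Chu--Vandermonde relation that still needs its own proof, whereas the paper's order-of-summation trick gets (5) for free from (2) and (3). If you keep your route, either cite that identity explicitly or prove it (the cleanest way is the $q^{rt}/(q)_r(q)_s(q)_t$ decomposition of $1/((q)_m(q)_n)$ from the appendix); otherwise the paper's derivation is the shorter path.
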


\begin{proof}
The first three identities are well-known and appear in \cite[Prop.\ 2]{Zagier-dilog}. 
The last two follow from the first three:
\begin{eqnarray*}
\sum_{r,s} 
\frac{q^{r s} x^r y^s}{(q)_r(q)_s } &=& 
\sum_r \frac{x^{r}}{(q)_r} \sum_s 
\frac{(q^r y)^s}{(q)_s }= \sum_r \frac{x^{r}}{(q)_r} \frac{1}{(q^ry)_\infty}
\\
&=&
\frac{1}{(y)_\infty} \sum_r \frac{(y)_r x^r}{(q)_r}=
\frac{(xy)_\infty}{(x)_\infty (y)_\infty }\, ,
\end{eqnarray*}
\begin{eqnarray*}
\sum_{r,s} 
(-1)^{r+s} \frac{q^{\frac{1}{2}(r-s)^2+\frac{1}{2}(r+s)} x^r y^s}{(q)_r(q)_s }
&=&
\sum_r \frac{(-1)^r q^{\frac{1}{2}r^2+\frac{1}{2}r} x^r}{(q)_r}
\sum_s \frac{(-1)^r q^{\frac{1}{2}s^2+\frac{1}{2}s} (q^{-r}y)^s}{(q)_s} \\
&=&
\sum_r \frac{(-1)^r q^{\frac{1}{2}r^2+\frac{1}{2}r} x^r}{(q)_r} (q^{1-r}y)_\infty
\\
&=& (yq)_\infty \sum_r \frac{(y^{-1})_r (xyq)^r}{(q)_r} =
\frac{(xq)_\infty(yq)_\infty}{(xyq)_\infty } \, .
\end{eqnarray*}
\end{proof}

\begin{remark}
\label{rem.5identities}
The identities of Lemma \ref{lem.5identities} also hold in the ring
$\BZ((x,y,q))$.
\end{remark}
Observe that $F_e(x)$ is an analytic function of $(x,q)$ when
$|q|<1$ and $x \in \BC$. With $|q|<1$ and $|y|<1$, Lemma 
\ref{lem.5identities} gives

\begin{eqnarray*}
\sum_e F_e(x)y^e &=&
\sum_n \frac{(-1)^n q^{\frac{1}{2}n^2+\frac{1}{2}n} x^n}{(q)_n}
\sum_e \frac{y^e}{(q)_{n+e}} \\
&=& \frac{1}{(y)_\infty} 
\sum_n \frac{(-1)^n q^{\frac{1}{2}n^2+\frac{1}{2}n} (xy^{-1})^n}{(q)_n}
= \frac{(x y^{-1}q)_\infty}{(y)_\infty} \,.
\end{eqnarray*}
Thus, the generating function of the left hand side of Equation 
\eqref{eq.Fgen} is

\begin{multline*}
\sum_{e_1,e_2} q^{e_1 e_2} F_{e_1}(q^{e_2} x_1)F_{e_2}(q^{e_1} x_2) z_1^{e_1}
z_2^{e_2}  \\
=\sum_{n_1,n_2} \frac{(-1)^{n_1+n_2}q^{\frac{1}{2}n_1^2+\frac{1}{2}n_2^2 
+ \frac{1}{2}n_1+ \frac{1}{2}n_2}x_1^{n_1} x_2^{n_2}}{(q)_{n_1}(q)_{n_2}}
\sum_{e_1,e_2} \frac{q^{e_1 e_2 + n_2 e_1 + n_1 e_2}z_1^{e_1}z_2^{e_2} 
 }{(q)_{n_1+e_1}(q)_{n_2+e_2}}
\\
=\frac{(z_1 z_2)_\infty}{(z_1)_\infty (z_2)_\infty}
\sum_{n_1,n_2} \frac{(-1)^{n_1+n_2}q^{\frac{1}{2}(n_1-n_2)^2
+ \frac{1}{2}n_1+ \frac{1}{2}n_2}(x_1 z_1^{-1})^{n_1} 
(x_2 z_2^{-1})^{n_2}}{(q)_{n_1}(q)_{n_2}}
\\
%\shoveleft{
=\frac{(z_1z_2)_\infty}{(z_1)_\infty (z_2)_\infty} \cdot
\frac{(x_1z_1^{-1}q)_\infty (x_2z_2^{-1} q)_\infty}{
(x_1 x_2 z_1^{-1}z_2^{-1}q)_\infty} \,.
%}
\end{multline*}

Likewise, 
the generating function of the right hand side of Equation 
\eqref{eq.Fgen} is the same

\begin{multline*}
\sum_{e_1,e_2} \left( \sum_{e_3} (x_1 x_2 q)^{e_3} F_{e_1+e_3}(x_1) F_{e_2+e_3}(x_2) 
F_{e_3}(x_1 x_2)\right) z_1^{e_1} z_2^{e_2} \\ 
=
\left(\sum_{e_1} F_{e_1}(x_1) z_1^{e_1}\right)
\left(\sum_{e_2} F_{e_2}(x_2) z_2^{e_2}\right)
\left(\sum_{e_3} F_{e_3}(x_1x_2) (x_1 x_2 z_1^{-1} z_2^{-1} q)^{e_3}\right)
\\
=\frac{(x_1z_1^{-1}q)_\infty}{(z_1)_\infty} \cdot
\frac{(x_2z_2^{-1}q)_\infty}{(z_2)_\infty} \cdot
\frac{(z_1z_2)_\infty}{(x_1 x_2 z_1^{-1}z_2^{-1}q)_\infty}\, .
\end{multline*}

The above identities for each side of Equation \eqref{eq.Fgen}
hold when $|q|<1$, $|z_1|<1$, $|z_2|<1$ and
$|x_1x_2z_1^{-1}z_2^{-1}q|<1$. Remark \ref{rem.5identities} implies that they also hold in the ring $\BZ((x_1,x_2,z_1,z_2,q))$. 
Extracting the coefficient of $z_1^{e_1} z_2^{e_2}$ from the above
concludes the proof of Equation \eqref{eq.Fgen}.
\qed

\subsection{A second proof of the pentagon identity}
\label{sub.secondpentagon}

In this section we give a second proof of the pentagon identity
using
\begin{equation} 
\label{eq.5qa}
\begin{split}
\frac1{(q)_m (q)_n} &=\underset{s+t=n}{\underset{r+s=m}{\sum_{r,s,t}}} 
\frac{q^{rt}}{(q)_r(q)_s(q)_t}\, ,\\
\frac{q^{mn}}{(q)_m (q)_n} &=\underset{s+t=n}{\underset{r+s=m}{\sum_{r,s,t}}} 
\frac{(-1)^s q^{\ha s^2-\ha s}}{(q)_r(q)_s(q)_t}= 
\sum_s \frac{(-1)^s q^{\ha s^2-\ha s}}{(q)_{m-s}(q)_{n-s}(q)_s}\, .
\end{split}
\end{equation}
The first identity is well-known \cite[Eqn.(13)]{Zagier-dilog}, 
and the second follows from 
the first by replacing $q$ by $q^{-1}$ and multiplying both sides by 
$(-1)^{m+n} q^{-\ha (m-n)^2 -\ha(m+n)}$. 

Using these equations, we will show here that
\begin{equation}\label{one}
\begin{split}
q^{e_1e_2} &\frac{(-1)^{n_1+n_2} q^{\ha n_1^2+\ha n_2^2+\ha n_1+\ha n_2+e_2n_1+e_1n_2}}{
(q)_{n_1} (q)_{n_2} (q)_{n_1+e_1} (q)_{n_2+e_2}}\\
&= \underset{r_2+r_3+e_3=n_2}{\underset{r_1+r_3+e_3=n_1}{
\sum_{r_1,r_2,r_3,e_3}}} \frac{ (-1)^{r_1+r_2+r_3} 
q^{\ha r_1^2+\ha r_2^2 +\ha r_3^2 +\ha r_1+\ha r_2 +\ha r_3 +e_3}}{
(q)_{r_1} (q)_{r_1+e_1+e_3}(q)_{r_2}(q)_{r_2+e_2+e_3}(q)_{r_3}(q)_{r_3+e_3}}.
\end{split}
\end{equation}
The sum on the right actually only has a finite number of non-zero terms, so there is no issue with 
convergence. If we multiply both sides with $x_1^{n_1} x_2^{n_2}$ and sum 
over all $n_1$ and $n_2$, then we again find
\begin{equation*}
q^{e_1e_2} F_{e_1} (q^{e_2} x_1) F_{e_2} (q^{e_1}x_2) 
= \sum_{e_3} (x_1x_2q)^{e_3} F_{e_1+e_3}(x_1) F_{e_2+e_3}(x_2) F_{e_3}(x_1x_2)\, .
\end{equation*}
To prove \eqref{one} we use Equations \eqref{eq.5qa} which give
\begin{equation*}
\begin{split}
&\frac{q^{(n_1+e_1)(n_2+e_2)}}{(q)_{n_1} (q)_{n_2} (q)_{n_1+e_1} (q)_{n_2+e_2}}\\
& \qquad = \underset{r_2+e_3=n_2}{\underset{r_1+e_3=n_1}{\sum_{r_1,r_2,r_3,e_3}}} 
\frac{(-1)^{r_3} q^{\ha r_3^2-\ha r_3 +r_1r_2}}{(q)_{r_1} (q)_{r_2} (q)_{n_1+e_1-r_3} 
(q)_{n_2+e_2-r_3} (q)_{r_3} (q)_{e_3}}\, .
\end{split}
\end{equation*}
Replacing $e_3$ by $e_3+r_3$ in this sum we get
\begin{equation*}
\begin{split}
&\frac{q^{(n_1+e_1)(n_2+e_2)}}{(q)_{n_1} (q)_{n_2} (q)_{n_1+e_1} (q)_{n_2+e_2}}\\
&\qquad = \underset{r_2+r_3+e_3=n_2}{\underset{r_1+r_3+e_3=n_1}{\sum_{r_1,r_2,r_3,e_3}}} 
\frac{(-1)^{r_3} q^{\ha r_3^2-\ha r_3 +r_1r_2}}{(q)_{r_1} (q)_{r_2} (q)_{n_1+e_1-r_3} 
(q)_{n_2+e_2-r_3} (q)_{r_3} (q)_{r_3+e_3}}\\
&\qquad = \underset{r_2+r_3+e_3=n_2}{\underset{r_1+r_3+e_3=n_1}{\sum_{r_1,r_2,r_3,e_3}}} 
\frac{(-1)^{r_3} q^{\ha r_3^2-\ha r_3 +r_1r_2}}{(q)_{r_1} (q)_{r_2} (q)_{r_1+e_1+e_3} 
(q)_{r_2+e_2+e_3} (q)_{r_3} (q)_{r_3+e_3}}\, .
\end{split}
\end{equation*}
Now multiplying both sides by $(-1)^{n_1+n_2} q^{\ha (n_1-n_2)^2 +\ha n_1 +\ha n_2}$ 
gives Equation \eqref{one}.

%%%%%%%%%%%%%%%%%%%%%%%%%%%%%%%%%%%%%%%%%%%%%%%%%%%%%%%%%%%%%%%%%%%%%%%%%%%%
%%%%%%%%%%%%%%%%%%%%%%%%%%%%%%%%%%%%%%%%%%%%%%%%%%%%%%%%%%%%%%%%%%%%%%%%%%%%

\section{The tetrahedron index and the quantum dilogarithm}
\label{sec.QDL}

Gukov-Gaiotto-Dimofte came up with the beautiful formula \eqref{eq.ID}for the
tetrahedron index from a Fourier transform of the quantum dilogarithm. 
For completeness, we include this relation here, taken from \cite{DGG2}.
The quantum dilogarithm of Faddeev and Kashaev is a fundamental building
block of quantum topology \cite{FK-QDL,K,K94}. 
The $q$-series version of this analytic function is given by
\be
\label{eq.QDL}
L(m,x,q)=\frac{(q^{-\frac{m}{2}+1}x^{-1})_\infty}{
(q^{-\frac{m}{2}}x)_\infty} \in \BZ((x))[[q^{1/2}]]
\ee
We claim that
%%%% see Mathematica file: dimofte/3Dindex/3DIndex.Computations.nb
\be
\label{IDQDL}
\sum_e I(m,e)(q) x^e = L(m,x,q) \,.
\ee
To prove this, use the definition of $I(m,e)$, shift $e$ to $e-n$ and use
the first two identities of Lemma \ref{lem.5identities}. We get
\begin{align*}
\sum_e I(m,e)(q) x^e &=
\sum_{n,e} (-1)^n \frac{q^{\frac{1}{2}n(n+1)
-\left(n+\frac{1}{2}e\right)m}x^e}{(q)_n(q)_{n+e}} \\
&=
\sum_{n,e} (-1)^n \frac{q^{\frac{1}{2}n(n+1)} 
\left(q^{-\frac{m}{2}} x^{-1}\right)^n
\left(q^{-\frac{m}{2}}x\right)^e}{(q)_n(q)_{e}} \\
&= \frac{(q^{-\frac{m}{2}+1}x^{-1})_\infty}{
(q^{-\frac{m}{2}}x)_\infty} \,.
\end{align*}
Each of the recursion relations \eqref{eq.rec1}, \eqref{eq.rec2}, 
\eqref{eq.rec1a} and \eqref{eq.rec2a} is equivalent to the corresponding
relation \eqref{eq.QDLrec1}-\eqref{eq.QDLrec2a} 
for the generating series $L(m,x,q)$

\begin{subequations} 
\be
\label{eq.QDLrec1}
(-1+ q^{-\frac{m}{2}} x^{-1}) L(m,x,q) + L(m+1,q^{\frac{m}{2}} x,q) =0 
\ee
\be
\label{eq.QDLrec2} 
(1- q^{-\frac{m}{2}} x^{-1}) L(m,x,q) + L(m-1,q^{\frac{m}{2}} x,q) =0
\ee
\be
\label{eq.QDLrec1a}
(1+x^2 -(q^{\frac{m}{2}}+q^{-\frac{m}{2}})) L(m,x,q) 
+x q^{\frac{m}{2}} L(m,q x,q) =0 
\ee
\be
\label{eq.QDLrec2a} 
L(m-2,x,q) -( L(m-1,q^{\frac{1}{2}} x,q)+L(m-1,q^{-\frac{1}{2}} x,q)-q^{1-m}
L(m-1,q^{-\frac{1}{2}} x,q) ) + L(m,x,q) =0
\ee
\end{subequations}
Equations  \eqref{eq.QDLrec1}-\eqref{eq.QDLrec2a} are easy to verify
using the fact that $L(m,q,x)$ is a proper hypergeometric function of
$(m,q)$. This gives an alternative proof of part (a) of Theorem \ref{thm.2}.

Observe finally that the recursions \eqref{eq.rec1} and \eqref{eq.rec2} have
a solution space of rank $2$. On the other hand, the 
recursions \eqref{eq.QDLrec1} and \eqref{eq.QDLrec2} have
a solution space of rank $1$.

\bibliographystyle{plain}
\bibliography{biblio}
\end{document}